\documentclass[11pt,final]{amsart}
\usepackage[utf8]{inputenc}
\usepackage[T1]{fontenc}
\usepackage[letterpaper,centering]{geometry}
\usepackage{lmodern}
\usepackage{eucal}
\usepackage{mathrsfs}
\usepackage{enumitem}
\usepackage{amsmath,amssymb,amsfonts}
\usepackage{xcolor}
\definecolor{darkblue}{rgb}{0,0,0.3}
\definecolor{darkgreen}{rgb}{0,0.4,0}
\usepackage[colorlinks=true,linkcolor=darkblue, citecolor=darkgreen, unicode,pdfborder={0 0 0}]{hyperref}
\usepackage[ps,arrow,matrix,tips,line]{xy}
{\setbox0\hbox{$ $}}\fontdimen16\textfont2=\fontdimen17\textfont2
\entrymodifiers={+!!<0pt,\the\fontdimen22\textfont2>}
\SelectTips{cm}{11}
\setlist[enumerate]{label={\upshape(\arabic*)},topsep=.7ex}

\setlist[itemize]{leftmargin=*}
\setlist[enumerate]{leftmargin=*}

\usepackage{ifdraft}
\ifdraft{
\usepackage[notcite,notref]{showkeys}
\addtolength{\hoffset}{1.5cm}

}{}

\theoremstyle{plain}
\newtheorem{thm}{Theorem}[section]
\newtheorem{conj}[thm]{Conjecture}

\newtheorem{lem}[thm]{Lemma}
\newtheorem{cor}[thm]{Corollary}
\newtheorem{prop}[thm]{Proposition}
\theoremstyle{definition}

\newtheorem{defn}[thm]{Definition}
\newtheorem{rmk}[thm]{Remark}
\newtheorem{rmks}[thm]{Remarks}
\newtheorem{example}[thm]{Example}

\numberwithin{equation}{section}

\input cyracc.def
\DeclareFontFamily{U}{russian}{}
\DeclareFontShape{U}{russian}{m}{n}
        { <5><6> wncyr5
        <7><8><9> wncyr7
        <10><10.95><12><14.4><17.28><20.74><24.88> wncyr10 }{}
\DeclareSymbolFont{Russian}{U}{russian}{m}{n}
\DeclareSymbolFontAlphabet{\mathcyr}{Russian}
\makeatletter
\let\@math@cyr\mathcyr
\renewcommand{\mathcyr}[1]{\@math@cyr{\cyracc #1}}
\makeatother

\newcommand{\Ba}{{\mathcyr{B}}}

\newcommand{\isoto}{\myxrightarrow{\,\sim\,}}
\makeatletter
\def\myrightarrow{{\setbox\z@\hbox{$\rightarrow$}\dimen0\ht\z@\multiply\dimen0 6\divide\dimen0 10\ht\z@\dimen0\box\z@}}
\def\myrightarrowfill@{\arrowfill@\relbar\relbar\myrightarrow}
\newcommand{\myxrightarrow}[2][]{\ext@arrow 0359\myrightarrowfill@{#1}{#2}}
\makeatother
\newcommand{\mtilde}{{\mathchoice
    {\widetilde{m}}
    {\widetilde{m}}
    {\rlap{$\scriptscriptstyle{m}$}\vphantom{\raise0pt\hbox{$m$}}\smash{\lower2.5pt\hbox{$\scriptscriptstyle\widetilde{\phantom{\scriptscriptstyle{m}}}$}}}
    {\rlap{$\scriptscriptstyle{m}$}\vphantom{\raise.2pt\hbox{$m$}}\smash{\lower2.05pt\hbox{$\scriptscriptstyle\widetilde{\phantom{\scriptscriptstyle{m}}}$}}}}}

\newcommand{\Mtilde}{{\mathchoice
    {\rlap{$M$}\mkern1mu\smash[b]{\lower.5pt\hbox{$\widetilde{\phantom{M}}$}}\mkern-1mu}
    {\rlap{$M$}\mkern1mu\smash[b]{\lower.5pt\hbox{$\widetilde{\phantom{M}}$}}\mkern-1mu}
    {\rlap{$\scriptstyle{M}$}\mkern1mu\smash[b]{\lower.5pt\hbox{$\widetilde{\phantom{\scriptstyle{M}}}$}}\mkern-1mu}
    {\widetilde{M}}}}

\newcommand{\pr}{{\mathrm{pr}}}

\newcommand{\et}{{\mathrm{\acute et}}}

\newcommand{\sO}{{\mathscr O}}

\newcommand{\sU}{{\mathscr U}}

\newcommand{\A}{{\mathbf A}}

\newcommand{\F}{{\mathbf F}}

\renewcommand{\P}{{\mathbf P}}

\newcommand{\Z}{{\mathbf Z}}

\newcommand{\nr}{\mathrm{nr}}

\newcommand{\Gm}{\mathbf{G}_\mathrm{m}}
\newcommand{\Gmbark}{\mathbf{G}_{\mathrm{m},\bar k}}
\newcommand{\Aut}{\mathrm{Aut}}
\newcommand{\Out}{\mathrm{Out}}
\newcommand{\Gal}{\mathrm{Gal}}
\newcommand{\SL}{\mathrm{SL}}
\newcommand{\Pic}{\mathrm{Pic}}
\newcommand{\Br}{\mathrm{Br}}
\newcommand{\Spec}{\mathrm{Spec}}

\renewcommand{\phi}{\varphi}
\renewcommand{\emptyset}{\varnothing}

\newcommand{\Ker}{{\mathrm{Ker}}}

\newcommand{\Hom}{{\mathrm{Hom}}}

\newcommand{\mmu}{\boldsymbol{\mu}}

\newcommand{\chapeau}{{\rlap{\smash{\hbox{\lower4pt\hbox{\hskip1pt$\widehat{\phantom{u}}$}}}}}}
\newcommand{\Picplushat}{\Pic_+^{{\smash{\hbox{\lower4pt\hbox{\hskip0.4pt$\widehat{\phantom{u}}$}}}}}}
\newcommand{\PicplusAhat}{\Pic_{+,\A}^{{\smash{\hbox{\lower4pt\hbox{\hskip.4pt$\widehat{\phantom{u}}$}}}}}}

\newcommand{\Pichat}{\Pic^{{\smash{\hbox{\lower4pt\hbox{\hskip0.4pt$\widehat{\phantom{u}}$}}}}}}

\newcommand{\Tr}{\mathrm{Tr}}
\newcommand{\Id}{\mathrm{Id}}

\hyphenation{semi-stable}
\hyphenpenalty=500
\pretolerance=515


 \makeatletter
 \renewcommand{\tocsection}[3]{%
   \indentlabel{\@ifnotempty{#2}{\bfseries\ignorespaces#1 #2\quad}}\bfseries#3}

 \renewcommand{\tocsubsection}[3]{%
   \indentlabel{\@ifnotempty{#2}{\hspace{1.6em}\ignorespaces#1 #2\quad}}#3}
 \makeatother

\makeatletter\let\@wraptoccontribs\wraptoccontribs\makeatother

\date{June 9th, 2022; revised on May 2nd, 2023}
\title{Supersolvable descent for rational points}

\author{Yonatan Harpaz}
\address{Institut Galil\'ee, Universit\'e Sorbonne Paris Nord, 99~avenue Jean-Baptiste Cl\'ement, 93430 Villetaneuse, France}
\email{harpaz@math.univ-paris13.fr}

\author{Olivier Wittenberg}
\address{Institut Galil\'ee, Universit\'e Sorbonne Paris Nord, 99~avenue Jean-Baptiste Cl\'ement, 93430 Villetaneuse, France}
\email{wittenberg@math.univ-paris13.fr}

\begin{document}

\begin{abstract}
We construct an analogue of the classical descent theory of Colliot-Thélène
and Sansuc in which algebraic tori are replaced with finite supersolvable
groups. As an application, we show that rational points are dense in the
Brauer--Manin set for smooth compactifications of certain quotients of
homogeneous spaces by finite supersolvable groups.  For suitably chosen
homogeneous spaces, this implies the existence of supersolvable Galois
extensions of number fields with prescribed norms, generalising work of
Frei--Loughran--Newton.
\end{abstract}

\vspace*{2cm}
\maketitle

\section{Introduction}

Let~$X$ be a smooth and irreducible variety over a number field~$k$.
The study of rational points on~$X$ often begins by embedding the set~$X(k)$
 diagonally into the product
$X(k_{\Omega})= \prod_{v \in \Omega} X(k_v)$,
where~$\Omega$ denotes the set of places of~$k$ and~$k_v$ the completion of~$k$ at~$v$.
We endow~$X(k_\Omega)$
 with the product of the $v$\nobreakdash-adic topologies.
The weak approximation property, that is, the density of~$X(k)$ in~$X(k_\Omega)$, frequently fails.
Following Manin~\cite{maninicm}, one can attempt to explain such failures
 by considering the Brauer--Manin set
 $X(k_{\Omega})^{\Br_{\nr}(X)}$,
defined as the set of elements of~$X(k_{\Omega})$ that are orthogonal,
with respect to the Brauer--Manin pairing, to the unramified Brauer group~$\Br_{\nr}(X)$ of~$X$.
We recall that $\Br_{\nr}(X)$ is the subgroup of~$\Br(X)$ formed by
those classes that extend to any (equivalently, to some) smooth compactification of~$X$,
and that
the Brauer--Manin set $X(k_{\Omega})^{\Br_{\nr}(X)}$ is a closed subset of~$X(k_\Omega)$
that satisfies the inclusions $X(k) \subseteq X(k_{\Omega})^{\Br_{\nr}(X)} \subseteq X(k_{\Omega})$
(see \cite[\textsection5.2]{skobook}).
A conjecture of Colliot-Thélène predicts that the Brauer--Manin
set is enough to fully account for
the gap between the topological closure of $X(k)$ and $X(k_{\Omega})$
when~$X$ is
\emph{rationally connected}---that is, when for any algebraically closed field extension~$K$ of~$k$,
two general $K$\nobreakdash-points of~$X$ can be joined by a rational curve over~$K$:

\newcommand{\citeconjct}{\cite{ctbudapest}}
\begin{conj}[\citeconjct]
\label{conj:ct}
Let~$X$ be a smooth and rationally connected variety over a number field $k$. The set $X(k)$ is a
dense subset of $X(k_{\Omega})^{\Br_{\nr}(X)}$.
\end{conj}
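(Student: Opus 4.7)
\emph{The statement is one of the central open conjectures on rational points on rationally connected varieties, so what follows is a strategic plan rather than a proof I could carry out in full.} The natural approach is to combine the fibration method with a descent theory, in order to reduce the conjecture, by induction, to cases where it is already known: smooth compactifications of connected linear algebraic groups and their principal homogeneous spaces (Sansuc, Borovoi), toric varieties, certain conic and quadric bundles, and so on.

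The first step would be to set up a descent step. Given a torsor $f\colon Y \to X$ under an algebraic group~$G$, any $(x_v) \in X(k_\Omega)^{\Br_{\nr}(X)}$ should lift, after replacing $f$ by a suitable twist $f^\sigma\colon Y^\sigma \to X$, to an adelic point of $Y^\sigma$ lying in $Y^\sigma(k_\Omega)^{\Br_{\nr}(Y^\sigma)}$; if the conjecture holds for each twist, it then holds for~$X$. For $G$ a torus this is the classical descent of Colliot-Th\'el\`ene and Sansuc, sufficient to settle the conjecture for varieties controlled by $\Pic(\bar X)$ but inadequate in general. To go further one must broaden the descent, and the title of the present paper indicates the chosen extension: allowing $G$ to be a finite supersolvable group. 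The first real step in the plan is thus to develop a supersolvable descent formalism compatible with~$\Br_{\nr}$, controlling how Brauer classes pull back along $f$ and its twists.

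Next I would couple the descent with the fibration method. Given a dominant rational map $X \dashrightarrow B$ with rationally connected generic fibre, one would like to approximate $(x_v)$ at a fibre over a $k$\nobreakdash-point of~$B$ close to the image of $(x_v)$, to which the conjecture can be applied inductively. Making this work requires two nontrivial inputs: a compatibility between $\Br_{\nr}(X)$, $\Br_{\nr}(B)$ and the unramified Brauer group of a smooth compactification of the generic fibre; and a strong approximation statement on~$B$, typically a version of Hypothesis~$(\mathrm{HH}_1)$. The supersolvable descent would be invoked precisely when the base~$B$ or the fibres are not directly within reach of the abelian descent.

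The hard part, and the reason the conjecture is open, is that for an arbitrary smooth rationally connected~$X$ there is no known procedure to produce a torsor or a fibration to which descent and induction apply in a nontrivial way. The maximal rationally connected fibration supplies no useful torsors, and nothing in the purely geometric condition of rational connectedness forces a convenient Galois-theoretic decomposition of~$X$. The present paper necessarily sidesteps this obstacle by restricting attention to varieties already equipped with such structure---smooth compactifications of quotients of homogeneous spaces by finite supersolvable groups---where a supersolvable torsor is available essentially by construction, and the conjecture can be tackled within the descent framework just outlined.
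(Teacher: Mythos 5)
The statement you were asked to prove is Conjecture~\ref{conj:ct} itself, which the paper does not prove: it is quoted as an open conjecture of Colliot-Thélène, and the paper only establishes special cases of it, so there is no proof in the paper to compare against. You correctly recognised this, and your strategic sketch --- a descent step under finite supersolvable groups compatible with $\Br_{\nr}$, coupled with the fibration method, applicable only to varieties already equipped with a suitable torsor such as quotients of homogeneous spaces by finite supersolvable groups --- accurately describes the route the paper actually takes for the cases it does settle (Theorem~\ref{th:main}, Corollary~\ref{cor:quotientsRC} and the applications of \S\ref{sec:applications}).
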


Though this conjecture is wide open in general, it has been established in several special cases.
One approach is via the theory of descent developed by Colliot-Thélène and Sansuc.
To explain it,
let us assume for the moment that~$X$ is proper. 
Given an algebraic torus~$T$ over $k$, this theory considers torsors $Y \to X$ under $T$.
The \emph{type} of such a torsor is the isomorphism class of the torsor
obtained from it by extending the scalars from~$k$ to an algebraic closure~$\bar k$ of~$k$;
a \emph{type}, in this context, is defined to be an isomorphism class of torsors over~$X_{\bar k}$, under~$T_{\bar k}$,
that is invariant under $\Gal(\bar k/k)$.
Torsors $Y\to X$ under~$T$ are classified, up to isomorphism, by the étale cohomology
group $H^1_\et(X,T)$, and types are the elements of the abelian group $H^1_\et(X_{\bar k},T_{\bar k})^{\Gal(\bar k/k)}$.
As~$X$ is proper, 
these groups fit into the exact sequence
\begin{align}
\label{se:hstype}
H^1(k,T) \to H^1_\et(X,T) \to H^1_\et(X_{\bar k},T_{\bar k})^{\Gal(\bar k/k)} \to H^2(k,T) \to H^2_\et(X,T)
\end{align}
induced by the Hochschild--Serre spectral sequence.
As can be seen from this sequence, if a type comes from a torsor $Y \to X$ defined over~$k$, then the isomorphism
class of this torsor
is unique up to a twist by an element of $H^1(k,T)$. In general, not every type comes from a torsor over~$k$:
the map $H^1_\et(X,T) \to H^1_\et(X_{\bar k},T_{\bar k})^{\Gal(\bar k/k)}$
need not be surjective. It is surjective if $X(k)\neq\emptyset$, as the map
$H^2(k,T) \to H^2_\et(X,T)$ then possesses a retraction.

The following foundational theorem
fully describes the \emph{algebraic} Brauer--Manin set
$X(k_\Omega)^{\Br_{1,\nr}(X)}$ in terms of the arithmetic of torsors under a torus, of a given type, over~$X$.
By definition
$X(k_\Omega)^{\Br_{1,\nr}(X)}$
is the subset of $X(k_\Omega)$ consisting of those collections of local points that are orthogonal,
with respect to the Brauer--Manin pairing, to the
\emph{algebraic} unramified Brauer
group $\Br_{1,\nr}(X)=\Ker\mkern1mu\big(\Br_{\nr}(X) \to \Br(X_{\bar k})\big)$.

\begin{thm}[Colliot-Thélène--Sansuc \cite{ctsandescent2}]
\label{thm:ct-sansuc}
Let~$X$ be a smooth, proper and geometrically irreducible variety over a number field~$k$
such that $\Pic(X_{\bar k})$ is torsion-free.
Let~$T$ be an algebraic torus over~$k$.
Let $\lambda \in H^1_\et(X_{\bar k},T_{\bar k})^{\Gal(\bar k/k)}$.
Then
\begin{align}
X(k_{\Omega})^{\Br_{1,\nr}(X)}= \bigcup_{f: Y \to X} f\Big(Y(k_{\Omega})^{\Br_{1,\nr}(Y)}\Big)\rlap,
\end{align}
where $f:Y\to X$ ranges over
the isomorphism classes of torsors $Y \to X$ of type $\lambda$.
In particular, if $Y(k)$ is a dense subset of $Y(k_{\Omega})^{\Br_{1,\nr}(Y)}$ for every torsor $Y \to X$ of type~$\lambda$, then $X(k)$ is a dense subset of $X(k_{\Omega})^{\Br_{1,\nr}(X)}$.
\end{thm}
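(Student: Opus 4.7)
The inclusion~$\supseteq$ is essentially formal: for any torsor $f\colon Y\to X$ of type~$\lambda$, pulling back Brauer classes along a resolved map between smooth compactifications yields $f^*\Br_{1,\nr}(X)\subseteq\Br_{1,\nr}(Y)$, and functoriality of the Brauer--Manin pairing gives $f(Y(k_\Omega)^{\Br_{1,\nr}(Y)})\subseteq X(k_\Omega)^{\Br_{1,\nr}(X)}$. The substance of the theorem lies in the reverse inclusion: given $(x_v)\in X(k_\Omega)^{\Br_{1,\nr}(X)}$, one must produce a torsor $f\colon Y\to X$ of type~$\lambda$ together with a lift $(y_v)\in Y(k_\Omega)^{\Br_{1,\nr}(Y)}$ of~$(x_v)$.

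The plan is the classical twisting argument. Assume first that a torsor $f_0\colon Y_0\to X$ of type~$\lambda$ exists; by~\eqref{se:hstype}, every other torsor of the same type is a twist $Y_0^\tau$ by some $\tau\in H^1(k,T)$. For each place~$v$, the point $x_v$ lifts to $Y_0^\tau(k_v)$ if and only if $\theta_v(x_v):=[f_0^{-1}(x_v)]\in H^1(k_v,T)$ equals~$\tau_v$. Finding the correct twist thus amounts to showing that $(\theta_v(x_v))_v$ lies in the image of the diagonal $H^1(k,T)\to\prod_v H^1(k_v,T)$; the restricted product makes sense because $\theta_v(x_v)=0$ for almost all~$v$ by integrality.

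For this I would invoke the compatibility between the Brauer--Manin pairing and local Tate duality. Torsion-freeness of $\Pic(X_{\bar k})$ allows the identification $H^1_\et(X_{\bar k},T_{\bar k})=\Hom(\widehat T,\Pic(X_{\bar k}))$, so~$\lambda$ becomes a $\Gal(\bar k/k)$-equivariant homomorphism $\widehat T\to\Pic(X_{\bar k})$, where $\widehat T$ is the character lattice. Composing the induced $H^1(k,\widehat T)\to H^1(k,\Pic(X_{\bar k}))$ with the Hochschild--Serre edge isomorphism $H^1(k,\Pic(X_{\bar k}))\cong\Br_{1,\nr}(X)/\Br(k)$ (using smoothness, properness, $H^3(k,\bar k^*)=0$, and the presence of local points) produces a subgroup $B_\lambda\subseteq\Br_{1,\nr}(X)$. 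A direct cup-product computation gives, for every $\chi\in H^1(k,\widehat T)$ and every place~$v$,
\[ \langle x_v,\lambda_*(\chi)\rangle_v = \langle\theta_v(x_v),\chi_v\rangle_v^{\mathrm{Tate}}. \]
The hypothesis that $(x_v)$ is orthogonal to~$B_\lambda$ then rephrases as orthogonality of $(\theta_v(x_v))_v$ to the image of $H^1(k,\widehat T)$ under the sum of local Tate pairings; by Poitou--Tate duality for~$T$, this is exactly the condition that $(\theta_v(x_v))_v$ come from a global $\tau\in H^1(k,T)$. Setting $Y:=Y_0^\tau$ yields a torsor whose fibres above the $x_v$ contain local points.

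The main obstacle, as I see it, is the final refinement: arranging that the lifts $(y_v)$ can be chosen to annihilate all of $\Br_{1,\nr}(Y)$, not merely $f^*\Br_{1,\nr}(X)$. The action of $T(k_v)$ is transitive on each fibre of~$f$, providing local degrees of freedom. Analysing $\Br_{1,\nr}(Y)/f^*\Br_{1,\nr}(X)$ by a Leray-type argument, based on the torsor structure of $Y\to X$ and the relation between $\Pic(Y_{\bar k})$, $\Pic(X_{\bar k})$ and~$\widehat T$, reduces the residual obstruction to a pairing against the character lattice; a second application of Poitou--Tate, combined with the full hypothesis $(x_v)\perp\Br_{1,\nr}(X)$, should force it to vanish. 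One must also handle the case when no torsor of type~$\lambda$ exists globally: then the image of~$\lambda$ in $H^2(k,T)$ is a nonzero locally trivial class, and a similar Brauer--Manin/Tate argument shows $X(k_\Omega)^{\Br_{1,\nr}(X)}=\emptyset$, consistent with the vacuous right-hand side.
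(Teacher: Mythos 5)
Your opening steps are sound and standard: the inclusion $\supseteq$ by functoriality, the identification of $\lambda$ with a Galois-equivariant map $\widehat T\to\Pic(X_{\bar k})$ (using torsion-freeness), the compatibility $\langle x_v,\lambda_*(\chi)\rangle_v=\langle\theta_v(x_v),\chi_v\rangle_v$, and the use of the Poitou--Tate sequence for~$T$ to produce a global twist $\tau$ so that every $x_v$ lifts to $Y_0^\tau(k_v)$; likewise the non-existence case can be handled by the injectivity of $H^2(k,T)\to H^2_\et(X,T)$ when $X(k_\Omega)^{\Br_{1,\nr}(X)}\neq\emptyset$. The genuine gap is exactly the point you yourself flag as ``the main obstacle'': your argument only yields $(y_v)\in Y(k_\Omega)$ orthogonal to $f^*\Br_{1,\nr}(X)$ plus the classes governed by $\widehat T$, not to all of $\Br_{1,\nr}(Y)$. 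This is not a routine refinement. For a torsor of type $\lambda$ one has $\Pic(Y_{\bar k})=\Coker\big(\lambda:\widehat T\to\Pic(X_{\bar k})\big)$, so $\Br_1(Y)$ modulo constants is $H^1(k,\Pic(Y_{\bar k}))$, which in general is not exhausted by classes pulled back from~$X$; and moving $y_v$ within its fibre only changes evaluations through the same local pairing with $H^1(k_v,\widehat T)$ that you have already spent. The promised ``Leray-type argument'' and ``second application of Poitou--Tate'' are precisely where the theorem's content lies, and as written they are a hope, not a proof.

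The paper proceeds differently and sidesteps this difficulty. It takes the theorem as proved in \cite{ctsandescent2} for \emph{universal} torsors, where the problem evaporates: a universal torsor $Y'\to X$ satisfies $\Pic(Y'_{\bar k})=0$, hence $\Br_1(Y')$ comes from $\Br(k)$ and $Y'(k_\Omega)^{\Br_{1,\nr}(Y')}=Y'(k_\Omega)$, so there is no residual orthogonality condition to arrange. The general case is then deduced formally: the type $\lambda$ determines a morphism $T'\to T$ from the torus with character group $\Pic(X_{\bar k})$, so any universal torsor $Y'\to X$ factors through a torsor $Y\to X$ of type~$\lambda$, and pushing forward points of $Y'(k_\Omega)$ along $Y'\to Y$ produces points of $Y(k_\Omega)$ that are orthogonal to $\Br_{1,\nr}(Y)$ by functoriality of the Brauer--Manin pairing (the projection formula). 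To complete your direct approach you would have to supply an argument of exactly this kind---in effect a secondary descent on~$Y$ reproving the reduction to universal torsors---so as it stands the proposal does not establish the statement.
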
 

As~$X$ is assumed to be proper in Theorem~\ref{thm:ct-sansuc},
one has $\Br_{1,\nr}(X)=\Br_1(X)$.
The group  $\Br_{1,\nr}(Y)$, on the other hand,
may be smaller
than $\Br_1(Y)$, since~$Y$ is not proper.

It is by now understood that
 Theorem~\ref{thm:ct-sansuc} still holds if $\Pic(X_{\bar k})$ is allowed to contain torsion
(see e.g.~\cite{weiopen}).
When $\Pic(X_{\bar k})$ is torsion-free, however,
there exists a privileged type of torsors over~$X$:
denoting by~$T'$ the algebraic torus over~$k$ with character group $\Pic(X_{\bar k})$,
there is a canonical isomorphism $H^1_\et(X_{\bar k},T'_{\bar k})=\mathrm{End}(\Pic(X_{\bar k}))$;
the torsors $Y' \to X$ under~$T'$
whose type is classified by the identity endomorphism of~$\Pic(X_{\bar k})$ are called \emph{universal torsors}.
They enjoy the special property that $\Pic(Y'_{\bar k})=0$.
By the Hochschild--Serre spectral sequence, it follows that the
natural map $\Br(k)\to \Br_1(Y')$ is surjective, so that $Y'(k_{\Omega})^{\Br_{1,\nr}(Y')}=Y'(k_\Omega)$.
As a consequence, Theorem~\ref{thm:ct-sansuc} effectively
reduces the statement that $X(k)$ is dense in $X(k_{\Omega})^{\Br_{1,\nr}(X)}$ to the
(in principle simpler) weak approximation property for the universal torsors of~$X$.
This approach
was fruitfully carried out in many special cases, notably for Châtelet surfaces in the influential
two-part work~\cite{cssI,cssII}, and later for various other types of varieties
\cite{heathbrownskorobogatov,
cthasko,browningmatthiesen,bms, derenthalsmeetswei,skodescenttoric}.

We note that
even though
Theorem~\ref{thm:ct-sansuc} is stated and proved in~\cite{ctsandescent2} only
in the case of universal torsors,
the general case follows.  Indeed, for any~$T$ and~$\lambda$ as in Theorem~\ref{thm:ct-sansuc},
the type~$\lambda$ determines a morphism $T' \to T$, so that any universal torsor $Y' \to X$
factors through a torsor $Y\to X$ of type~$\lambda$, and the image
of $Y'(k_\Omega)=Y'(k_{\Omega})^{\Br_{1,\nr}(Y')}$
 in $Y(k_\Omega)$
is then contained in
$Y(k_{\Omega})^{\Br_{1,\nr}(Y)}$ by the projection formula.

The classical theory of descent of Colliot-Thélène and Sansuc allows one to
neatly capture the algebraic part of the Brauer--Manin obstruction
in terms of universal torsors and their local points. Formulated as above, it admits, however, a
limitation:
when the strategy consisting in applying Theorem~\ref{thm:ct-sansuc} to verify Conjecture~\ref{conj:ct}
for a given~$X$ works,
one finds that a stronger claim than Conjecture~\ref{conj:ct} is in fact being proved, namely, the density of $X(k)$ 
not only in
$X(k_{\Omega})^{\Br_{\nr}(X)}$
but also  in the larger set  $X(k_{\Omega})^{\Br_{1,\nr}(X)}$. The two sets
$X(k_{\Omega})^{\Br_{\nr}(X)}$
and  $X(k_{\Omega})^{\Br_{1,\nr}(X)}$
 coincide for geometrically rational varieties, but among rationally connected varieties
it does happen that~$X(k)$ fails to be dense in the larger one
(see \cite[\textsection2]{hartranscendant}, \cite[Example~5.4]{dlan}),
thus limiting the scope of applicability of the method.
A way to overcome this issue was suggested
in~\cite{hwzceh}, where the following variant of
Theorem~\ref{thm:ct-sansuc} is proved---to be precise, Theorem~\ref{thm:zceh-descent} results from combining
\cite[Théorème~2.1]{hwzceh} in the proper case with \cite[Theorem~6.1.2~(a)]{skobook}:

\begin{thm}[\cite{hwzceh}]\label{thm:zceh-descent}
Let~$X$ be a smooth, proper and geometrically irreducible variety over a number field~$k$.
Let~$T$ be an algebraic torus over~$k$.
Let $\lambda \in H^1_\et(X_{\bar k},T_{\bar k})^{\Gal(\bar k/k)}$.
Then
\begin{align}
\label{eq:zceh-descent}
X(k_{\Omega})^{\Br_{\nr}(X)}= \bigcup_{f: Y \to X} f\Big(Y(k_{\Omega})^{\Br_{\nr}(Y)}\Big)\rlap,
\end{align}
where $f:Y\to X$ ranges over
the isomorphism classes of torsors $Y \to X$ of type $\lambda$.
In particular, if $Y(k)$ is a dense subset of $Y(k_{\Omega})^{\Br_{\nr}(Y)}$ for every torsor $Y \to X$ of type~$\lambda$, then $X(k)$ is a dense subset of $X(k_{\Omega})^{\Br_{\nr}(X)}$.
\end{thm}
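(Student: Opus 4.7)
I would begin with the easier inclusion~$\supseteq$. Given a torsor $f:Y\to X$ of type~$\lambda$ and a point $(y_v)\in Y(k_{\Omega})^{\Br_{\nr}(Y)}$, choose a smooth compactification $\bar Y$ of~$Y$. By resolution of singularities in characteristic~$0$, and using that~$X$ is proper, one may further blow up $\bar Y$ so that~$f$ extends to a morphism $\bar f:\bar Y\to X$. Then for any $\alpha\in\Br_{\nr}(X)=\Br(X)$, the pullback $\bar f^*\alpha\in\Br(\bar Y)$ witnesses that $f^*\alpha\in\Br_{\nr}(Y)$, and functoriality of the Brauer--Manin pairing yields $f((y_v))\in X(k_\Omega)^{\Br_{\nr}(X)}$.

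For the harder inclusion~$\subseteq$, I would proceed by twisting. Assume there exists a torsor $f_0:Y_0\to X$ of type~$\lambda$ (if not, \eqref{se:hstype} furnishes a nonzero obstruction class in $H^2(k,T)$, and both sides of the identity can be shown to be empty by evaluating this class against adelic points). Given $(x_v)\in X(k_\Omega)^{\Br_{\nr}(X)}$, the fibers $f_0^{-1}(x_v)$ define an adelic class $(\sigma_v)\in\prod_v H^1(k_v,T)$, and $(x_v)$ lifts to some twist $Y_\sigma\to X$ of~$Y_0$ precisely when $(\sigma_v)$ is the localisation of a global class $\sigma\in H^1(k,T)$. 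Via Poitou--Tate duality, and by cup-product of $(\sigma_v)$ with the type~$\lambda$ against elements of the dual cohomology of~$T$, this obstruction rewrites as an orthogonality condition on $(x_v)$ against a subgroup $B_\lambda\subseteq\Br(X)$; this packaging is essentially what \cite[Theorem~6.1.2~(a)]{skobook} provides.

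The main obstacle---and this is the content of \cite[Théorème~2.1]{hwzceh} in the proper case---is twofold. One must show that $B_\lambda$ is contained in~$\Br_{\nr}(X)$, so that the hypothesis $(x_v)\perp\Br_{\nr}(X)$ does produce a twist~$\sigma$ and an adelic lift $(y_v)\in Y_\sigma(k_\Omega)$. More delicately, one must arrange this lift to lie in $Y_\sigma(k_\Omega)^{\Br_{\nr}(Y_\sigma)}$, and not merely in $Y_\sigma(k_\Omega)^{\Br_{1,\nr}(Y_\sigma)}$. I would tackle the latter by exploiting the $T(k_\Omega)$-torsor structure on the fibers of~$f_\sigma$: translating $(y_v)$ within its orbit shifts its pairing with any class $\alpha\in\Br_{\nr}(Y_\sigma)$ by a character of $T(k_\Omega)$ which, via Hochschild--Serre and the type~$\lambda$, is controlled by the image of~$\alpha$ in the quotient $\Br_{\nr}(Y_\sigma)/f_\sigma^*\Br_{\nr}(X)$. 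The assumption $(x_v)\perp\Br_{\nr}(X)$, combined with a further Poitou--Tate argument on the cohomology of~$T$, then supplies enough flexibility to make all of these characters vanish simultaneously at~$(y_v)$. The real technical challenge, in my view, lies in the bookkeeping needed to render the two duality arguments in play mutually compatible and in the cohomological analysis of the quotient $\Br_{\nr}(Y_\sigma)/f_\sigma^*\Br_{\nr}(X)$ as a Galois module.
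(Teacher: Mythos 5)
Your proposal is correct and follows essentially the same route as the paper, which gives no independent argument for this statement but obtains it exactly as you do, by combining the classical descent input of \cite[Theorem~6.1.2~(a)]{skobook} with \cite[Théorème~2.1]{hwzceh} in the proper case, the latter being precisely the refinement from $\Br_{1,\nr}$ to the full $\Br_{\nr}$ of the twists that you single out as the crux. (Two minor remarks: since $X$ is proper, $B_\lambda\subseteq\Br_1(X)\subseteq\Br(X)=\Br_{\nr}(X)$ is automatic, so the first of your two ``obstacles'' is vacuous; and your sketch of torus translations plus Poitou--Tate is only a guess at the inner workings of the cited theorem and is not needed for the derivation itself.)
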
 

The theorems discussed so far admit generalisations to open varieties in which $X(k_\Omega)$
is replaced with the space of adelic points $X(\A_k)$, and $\Br_{\nr}(X)$ with the larger group~$\Br(X)$;
for instance, Theorem~\ref{thm:ct-sansuc} was generalised in
the work of Wei~\cite{weiopen},
which builds on
\cite[Theorem~6.1.2]{skobook} and on \cite[Proposition~8.12]{harskoopen}, and later in the work
of Cao, Demarche and Xu~\cite[Theorem~1.2]{cdx}.
We shall not consider such generalisations in this article.
Theorems~\ref{thm:ct-sansuc} and~\ref{thm:zceh-descent}
can also be extended to
open varieties while keeping focus on the set~$X(k_\Omega)$ and on the group $\Br_{\nr}(X)$.
In particular, it follows from \cite[Proposition~8.12, Corollary~8.17]{harskoopen},
from Harari's ``formal lemma'' \cite[Theorem~13.4.3]{ctskobook}
and from \cite[Théorème~2.1]{hwzceh},
as in the proof of \cite[Corollaire~2.2]{hwzceh},
that
the statements of 
Theorem~\ref{thm:ct-sansuc} and Theorem~\ref{thm:zceh-descent}
remain valid without the properness assumption,
provided one assumes, in the case of Theorem~\ref{thm:zceh-descent}, that the quotient of
$\Br_\nr(X)$ by the subgroup of constant classes is finite
(as is the case when~$X$ is rationally connected),
and provided one replaces, on the one hand, the notion of type due to Colliot-Thélène and Sansuc
by that of extended type introduced by Harari and Skorobogatov~\cite{harskoopen},
and on the other hand,
the right-hand sides
of the asserted equalities by their topological closure in~$X(k_\Omega)$.
  Thus, for instance,
the equality~\eqref{eq:zceh-descent}
becomes
\begin{align}
X(k_{\Omega})^{\Br_{\nr}(X)}= \overline{\bigcup_{f: Y \to X} f\Big(Y(k_{\Omega})^{\Br_{\nr}(Y)}\Big)}\rlap,
\end{align}
where $\mkern1mu\overline{\mkern-1muM}$ denotes the topological closure of a subset~$M$ of~$X(k_\Omega)$.

Pursuing the line of thought explored in~\cite{hwzceh}, the
goal of the present article is to prove an analogue of
Theorem~\ref{thm:zceh-descent} in the case where the torus~$T$ is replaced
with a \emph{supersolvable finite group}.
We shall adapt the
notion of a type to the non-abelian setting as follows: for a smooth and geometrically irreducible variety~$X$,
we define a finite descent type on~$X$ to be a variety~$\bar Y$ over~$\bar k$ equipped with
a finite étale map
 $\bar Y \to X_{\bar k}$ such that the composed map $\bar Y \to X_{\bar k} \to X$ is
Galois, in the sense that the function
field extension $\bar k(\bar Y)/k(X)$ is Galois; and we define a torsor of type~$\bar Y$ to be
an étale map $Y \to X$ such
that the $X_{\bar k}$\nobreakdash-schemes $Y_{\bar k}$ and~$\bar Y$ are isomorphic.
A torsor of type~$\bar Y$ is then naturally a torsor under a finite group scheme over~$k$
whose group of $\bar k$\nobreakdash-points is the finite group $\Aut(\bar Y/X_{\bar k})$
canonically associated with the type~$\bar Y$
(although the group scheme itself is not canonically associated with~$\bar Y$).
Any finite descent type determines not only a finite group $\bar G = \Aut(\bar Y/X_{\bar k})$
but also an outer Galois action $\Gal(\bar k/k) \to \Out(\bar G)$ on $\bar G$. We say that a finite descent
type~$\bar Y$ is supersolvable if~$\bar G$ admits a filtration
\begin{align*}
\{1\} =  \bar G_0 \subseteq  \bar G_1 \subseteq \dots \subseteq  \bar G_n =  \bar G
\end{align*}
such that each $\bar G_i$ is a normal subgroup of~$\bar G$ stable under the outer Galois action, and each successive quotient $\bar G_{i+1}/\bar G_i$ is cyclic.
We say that it is rationally connected if~$\bar Y$, as a variety over~$\bar k$,
is rationally connected (in the sense 
made explicit just before Conjecture~\ref{conj:ct}).

Our main result in this article is the following:

\begin{thm}[supersolvable descent, see Theorem~\ref{th:main}]\label{th:main-intro}
Let $X$ be a smooth and geometrically irreducible variety over a number field~$k$.
Let~$\bar Y$ be a rationally connected supersolvable finite descent type on~$X$.
Then
\begin{align}
X(k_{\Omega})^{\Br_{\nr}(X)}= \overline{\bigcup_{f: Y \to X} f\Big(Y(k_{\Omega})^{\Br_{\nr}(Y)}\Big)}\rlap,
\end{align}
where $f:Y\to X$ ranges over
the isomorphism classes of torsors $Y \to X$ of type~$\bar Y$.
In particular, 
 Conjecture~\ref{conj:ct} holds for~$X$
if it holds for~$Y$
for every torsor $Y \to X$ of type~$\bar Y$.
\end{thm}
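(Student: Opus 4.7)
The plan is to proceed by induction on the length~$n$ of the supersolvable filtration $\{1\} = \bar G_0 \subseteq \cdots \subseteq \bar G_n = \bar G$: the case $n = 0$ is trivial, the base case is $n = 1$ where $\bar G$ is cyclic, and the inductive step peels off the bottom subgroup~$\bar G_1$. For the inductive step with $n \ge 2$, set $\bar Z := \bar Y/\bar G_1$, which by the Galois-stability and normality of~$\bar G_1$ is a finite étale Galois cover of~$X_{\bar k}$ with group~$\bar G/\bar G_1$, the subgroups $\bar G_i/\bar G_1$ for $1 \le i \le n$ providing a supersolvable filtration of~$\bar G/\bar G_1$ of length~$n-1$. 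Thus $\bar Z$ is a supersolvable descent type on~$X$ of length~$n-1$, rationally connected as a quotient of the rationally connected variety~$\bar Y$ by the finite group~$\bar G_1$. The induction hypothesis applied to~$\bar Z$ yields
\begin{align*}
X(k_\Omega)^{\Br_\nr(X)} = \overline{\bigcup_{f': Y' \to X} f'\Big(Y'(k_\Omega)^{\Br_\nr(Y')}\Big)}\rlap,
\end{align*}
where~$Y'$ ranges over torsors of type~$\bar Z$.

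For each such~$Y'$, the cover $\bar Y \to \bar Z = Y'_{\bar k}$ is a rationally connected cyclic descent type on~$Y'$ with group~$\bar G_1$, and the base case applied to~$Y'$ gives
\begin{align*}
Y'(k_\Omega)^{\Br_\nr(Y')} = \overline{\bigcup_{g: Y \to Y'} g\Big(Y(k_\Omega)^{\Br_\nr(Y)}\Big)}\rlap,
\end{align*}
with~$g$ ranging over torsors of type~$\bar Y$ on~$Y'$. Each composition $Y \to Y' \to X$ is a torsor of type~$\bar Y$ on~$X$---the Galois property of $\bar k(\bar Y)/k(X)$ being built into the hypothesis on~$\bar Y$ and the geometric fibre being $\bar Y \to X_{\bar k}$ by construction---and combining the two displays, using that the closure operation absorbs iterated unions, yields the desired equality.

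For the base case with~$\bar G$ cyclic, a torsor $Y \to X$ of type~$\bar Y$ is a torsor under a finite commutative $k$-group scheme~$G$ with $G_{\bar k} = \bar G$. One embeds~$G$ into a short exact sequence $1 \to G \to T \to T' \to 1$ of commutative $k$-group schemes with~$T$ and~$T'$ algebraic tori (taking $T = R_{k'/k}\Gm^r$ for a splitting field~$k'$ of~$G$, embedding $G_{k'}$ componentwise via characters, and letting $T'$ be the quotient). The type~$\bar Y$ then determines a Galois-invariant class $\lambda \in H^1_\et(X_{\bar k}, T_{\bar k})^{\Gal(\bar k/k)}$, and the long exact étale cohomology sequence of $1 \to G \to T \to T' \to 1$ identifies $T$-torsors on~$X$ of type~$\lambda$ whose image in $H^1_\et(X, T')$ is trivial with the pushouts of $G$-torsors of type~$\bar Y$. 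Applying the open-variety version of Theorem~\ref{thm:zceh-descent} to~$(T, \lambda)$, then using weak approximation for~$T'$ to adjust an arbitrary $T$-torsor of type~$\lambda$ into one that descends to a $G$-torsor, yields the cyclic case; the open-variety version is applicable because the rational connectedness of~$\bar Y$ forces $\Br_\nr(Y)/\Br(k)$ to be finite for every torsor~$Y$ of type~$\bar Y$.

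The main obstacle will be the cyclic base case, and more precisely the passage from $T$-torsors of type~$\lambda$ to $G$-torsors of type~$\bar Y$: one must track how the pushout operation $Y \mapsto Y \times^G T$ interacts with the unramified Brauer group and the Brauer--Manin pairing, and use weak approximation for~$T'$ in a careful way so that an adelic point surviving the $T$-descent can be realised by an adelic point on some $G$-torsor of the prescribed type. A subsidiary technical point is verifying at each inductive step that the varieties in play lie in the scope of the open-variety form of Theorem~\ref{thm:zceh-descent}, which comes down to the finiteness of $\Br_\nr/\Br(k)$ ensured by rational connectedness and its preservation under finite quotients and under passage to torsors.
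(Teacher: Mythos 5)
Your inductive skeleton is sound: peeling the cyclic piece off the bottom of the filtration (passing to $\bar Z=\bar Y/\bar G_1$) instead of off the top ($\bar Y'=\bar Y/\bar G_{n-1}$, as the paper does) is a harmless mirror image of the paper's induction, and your points about composition of torsors, stability of the induced filtrations, and rational connectedness of the intermediate quotient all go through. The gap is in the cyclic base case. A first, fixable, problem is that your resolution puts quasi-triviality on the wrong side: you take $T$ quasi-trivial and let $T'=T/G$ be arbitrary, but arbitrary tori need not satisfy weak approximation and $H^1(k,T')$ need not vanish, so ``weak approximation for~$T'$'' is not an available tool. The paper chooses the resolution the other way round (Lemma~\ref{lem:resolutionquasitrivial}): $1\to G\to T\to Q\to 1$ with the \emph{quotient}~$Q$ quasi-trivial, which is what gives both $H^1(k,Q)=0$ (to lift classes from $H^1(k,T)$ to $H^1(k,G)$) and a good base for the fibration step below.

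The serious gap is precisely the step you flag as ``the main obstacle'', and the mechanism you propose cannot close it. An adelic point orthogonal to $\Br_{\nr}$ on a $T$\nobreakdash-torsor $W\to X$ of type~$\lambda$ --- even when $W$ is literally the contracted product $Y\times_k^G T$ of a $G$\nobreakdash-torsor $Y$ of type~$\bar Y$ --- yields no adelic point on any twist $Y^\sigma$: the fibres of $W\to X$ are $T$\nobreakdash-torsors, and points of $W$ do not lie on $G$\nobreakdash-torsors. The paper's solution is to use the \emph{second} projection $\pi\colon Z=Y\times_k^G T\to Q$, whose fibres over $q\in Q(k)$ are exactly the twists $Y^\sigma$ with $[\sigma]=\partial q$, and then to prove a fibration theorem over the quasi-trivial torus~$Q$ (Theorem~\ref{th:fibration}, resting on Theorem~\ref{th:hararifibration}, i.e.\ on Harari's fibration method) asserting that $Z(k_\Omega)^{\Br_\nr(Z)}$ is approximated by the sets $Z_q(k_\Omega)^{\Br_\nr(Z_q)}$ for $q\in Q(k)$. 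This is a much deeper input than any approximation property of a torus, and it is exactly here that cyclicity of~$\bar G$ is used: hypothesis~(3) of Theorem~\ref{th:fibration}, the existence of a rational section of $\pi_{\bar k}$, is established via the Kummer sequence and Graber--Harris--Starr (Lemma~\ref{lem:usescyclic}). Your sketch uses only commutativity of~$G$, never cyclicity; since a Galois-stable filtration with cyclic quotients need not exist for a general abelian $\bar G$ (e.g.\ $(\Z/p)^2$ with an irreducible outer action), a cyclicity-free argument would prove more than the paper does and should be treated as a red flag. Two smaller points: the identification of $T$\nobreakdash-torsors of type~$\lambda$ killed in $H^1_\et(X,T')$ with pushouts of $G$\nobreakdash-torsors \emph{of type $\bar Y$} is not automatic (the fibre of $H^1_\et(X_{\bar k},\bar G)\to H^1_\et(X_{\bar k},T_{\bar k})$ over~$\lambda$ may contain several classes, especially when $X$ has nonconstant units); and you also need the existence of at least one $G$\nobreakdash-torsor of type~$\bar Y$ to start with, which the paper obtains from the Colliot-Thélène--Sansuc injectivity of $H^2(k,G)\to H^2_\et(X,G)$ (Lemma~\ref{lem:existsonef}) rather than from the toric descent output.
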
 

It is now crucial that~$X$ is not assumed proper in the statement of Theorem~\ref{th:main-intro}:
indeed, the hypothesis that~$\bar Y$ is rationally connected
implies that~$X_{\bar k}$ is rationally connected, and
proper smooth rationally
connected varieties are simply connected and hence do not possess
nontrivial finite descent types.

Theorem~\ref{th:main-intro} can be used to prove Conjecture~\ref{conj:ct} for quotient varieties $Y/G$ where $G$ is a finite supersolvable group acting freely on a quasi-projective rationally connected variety~$Y$, when
Conjecture~\ref{conj:ct} is already known for~$Y$ and for all of its twists. In particular, the following case of Conjecture~\ref{conj:ct} can be proved via supersolvable descent (see Example~\ref{example:quotientlinear}):

\begin{cor}
Let~$Y$ be a homogeneous space of a connected linear algebraic group~$L$
with connected geometric stabilisers.
Suppose that a finite
supersolvable group~$G$  acts on~$L$ (as an algebraic group) and
on~$Y$ (as a homogeneous space of~$L$, compatibly with its action on~$L$), and
that the action on~$Y$ is free.  Then
Conjecture~\ref{conj:ct} holds for the variety $X = Y/G$.
\end{cor}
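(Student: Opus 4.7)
The plan is to apply Theorem~\ref{th:main-intro} to the natural \'etale cover $Y \to X = Y/G$ and then invoke known cases of Conjecture~\ref{conj:ct} for homogeneous spaces of connected linear algebraic groups with connected geometric stabilisers.

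Since~$G$ acts freely on the quasi-projective variety~$Y$, the quotient $X = Y/G$ exists as a smooth quasi-projective variety over~$k$, and the projection $Y\to X$ is an \'etale $G$\nobreakdash-torsor. Setting $\bar Y = Y_{\bar k}$ and viewing $\bar Y\to X_{\bar k}$ as a finite \'etale cover of~$X_{\bar k}$, we obtain a finite descent type on~$X$ in the sense of the article, whose associated finite group is canonically identified with~$G$. Because~$G$ is supersolvable, there exists a filtration of~$G$ by characteristic subgroups with cyclic successive quotients; such a filtration is automatically stable under the outer Galois action, so~$\bar Y$ is a supersolvable finite descent type. Moreover, homogeneous spaces of connected linear algebraic groups with connected geometric stabilisers are rationally connected (in fact unirational after base change to~$\bar k$), so~$\bar Y$ is rationally connected as well, and Theorem~\ref{th:main-intro} applies.

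It therefore suffices to check Conjecture~\ref{conj:ct} for every torsor $Y'\to X$ of type~$\bar Y$. Any such~$Y'$ is a twist of~$Y$ by a Galois cocycle valued in a suitable twisted form of the finite group scheme $G_k$ attached to the torsor $Y\to X$. Since~$G$ acts on~$L$ as an algebraic group and on~$Y$ compatibly with this action, the same cocycle simultaneously twists~$L$ into a form~$L'$ over~$k$, which is again a connected linear algebraic group, and endows~$Y'$ with the structure of a homogeneous space of~$L'$. The geometric stabilisers of points of~$Y'$ under~$L'$ are $\bar k$-conjugate to those of corresponding points of~$Y$ under~$L$, and hence remain connected.

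To conclude, one invokes the fact that Conjecture~\ref{conj:ct} is known for homogeneous spaces of connected linear algebraic groups with connected geometric stabilisers (a theorem of Borovoi, subsequently extended), which applied to each~$Y'$ finishes the proof. The step requiring most care is the twisting step: one must verify that the full $G$\nobreakdash-equivariant structure on the pair $(L,Y)$ propagates faithfully to a pair $(L',Y')$ attached to each cocycle, so that every torsor of type~$\bar Y$ over~$X$ is identified with a homogeneous space of a connected linear algebraic group with connected geometric stabilisers. Once this has been established, Theorem~\ref{th:main-intro} completes the argument.
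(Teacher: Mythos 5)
Your overall route is the one the paper takes: view $Y \to X$ as a torsor under the constant group scheme attached to~$G$, so that $Y_{\bar k}$ is a rationally connected finite descent type on~$X$; identify the torsors of type~$Y_{\bar k}$ with the twists $Y^\sigma$ for $[\sigma]\in H^1(k,G)$ (Proposition~\ref{prop:nonabcoh}, Remark~\ref{rmk:firstremark}); use the compatible action of~$G$ on~$L$ to twist $L$ into~$L^\sigma$ by the same cocycle, so that $Y^\sigma$ is a homogeneous space of~$L^\sigma$ with connected geometric stabilisers; and conclude by Borovoi's theorem together with Theorem~\ref{th:main-intro}. This is precisely Corollary~\ref{cor:quotientsRC} combined with Example~\ref{example:quotientlinear}.

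One justification in your write-up is incorrect, although the conclusion it aims at is fine. You assert that a supersolvable finite group admits a filtration by \emph{characteristic} subgroups with cyclic successive quotients; this is false in general, e.g.\ for $G=(\Z/p\Z)^2$, whose only characteristic subgroups are $\{1\}$ and~$G$. What saves the argument is that Definition~\ref{def:supersolvable} only requires normal subgroups stable under the outer Galois action induced by~\eqref{eq:gautgal}, and in the present situation that outer action is trivial: the torsor $Y\to X$ is defined over~$k$ under the \emph{constant} group scheme~$G$, so the sequence~\eqref{eq:gautgal} splits via the splitting of Proposition~\ref{prop:splittings} attached to this $k$\nobreakdash-form, and conjugation through this splitting is the (trivial) Galois action on $G(\bar k)=G$. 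Hence any normal series of~$G$ with cyclic quotients, which exists since~$G$ is supersolvable as an abstract group, verifies the definition, and the rest of your argument goes through as written.
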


As an application, we prove the following generalisation of a theorem of 
Frei, Loughran and Newton~\cite{freiloughrannewton}:

\begin{cor}
Let~$k$ be a number field and $\mathcal A \subset k^*$ be a finitely generated subgroup.
Let~$G$ be a supersolvable finite group.
Then there exists a Galois extension $K/k$ with Galois group isomorphic to~$G$
such that every element of~$\mathcal A$ is a norm from~$K$.
Moreover, given a finite set of places~$S$ of~$k$, one can require that the places of~$S$ split in~$K$.
\end{cor}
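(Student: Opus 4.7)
Fix generators $a_1,\dots,a_n$ of $\mathcal{A}$. The plan is to build a smooth $k$-variety $X = Y/G$ of the form considered in the preceding corollary, whose $k$-points parametrise Galois $G$-extensions $K/k$ equipped with elements $x_i \in K^\times$ of prescribed norms. Let $E = k^{|G|}$ be the trivial $G$-Galois étale $k$-algebra (with $G$ acting on itself by left translation), and let $T = R_{E/k}\Gm$, carrying its natural $G$-action by permutation of factors---an action by algebraic group automorphisms. For each $i$, let $V_i \subset T$ be the fibre over $a_i$ of the norm map $T \to \Gm$: a $G$-stable torsor under the norm-one subtorus $T^{(1)}$. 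Embed $G$ into $L_0 = \SL_N$ (with $N = |G|$) via the regular representation, and let $Y_0 = L_0$, viewed as a left $L_0$-homogeneous space on which $G$ acts freely by right translation (compatibly with the trivial $G$-action on $L_0$ as an algebraic group). Set $Y = Y_0 \times \prod_i V_i$: it is a homogeneous space under the connected linear algebraic group $L_0 \times \prod_i T^{(1)}$, with trivial stabilisers, on which $G$ acts freely and compatibly. The preceding corollary then yields Conjecture~\ref{conj:ct} for $X := Y/G$. A $k$-point of $X$ corresponds to a $G$-Galois étale $k$-algebra $K$ together with a $k$-point of the twisted variety $L_0 \times \prod_i V_i^K$, where $V_i^K = \{x \in R_{K/k}\Gm : N_{K/k}(x) = a_i\}$ (the twist of $L_0$ being trivial since $G$ acts trivially on it as an algebraic group).

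\textbf{Adelic point.} At each $v \in S$, take $K_v = k_v^G$ and any tuple $(x_{i,v,g})_{g\in G}$ with product $a_i$. By Shafarevich's theorem (applicable since supersolvable groups are solvable), fix a Galois $G$-extension $K_0/k$, and choose generators $g_1,\dots,g_r$ of $G$ (for instance those arising from the supersolvable filtration). By Chebotarev applied to $K_0/k$, pick distinct places $v_1,\dots,v_r$ of $k$, outside $S$ and the finite bad-reduction set of $X$, such that $\Frob_{v_i}$ is conjugate to $g_i$. At each $v_i$, take a local point of $X$ whose associated local $G$-torsor is $(K_0)_{v_i}$. Fill in the remaining places with arbitrary local points. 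Since $X$ is geometrically rationally connected, $\Br_\nr(X)/\Br(k)$ is finite; by Harari's formal lemma the adelic point can then be adjusted at places outside $S \cup \{v_1,\dots,v_r\}$ to lie in $X(k_\Omega)^{\Br_\nr(X)}$ without disturbing its values at the fixed places.

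\textbf{Conclusion.} By density of $X(k)$ in $X(k_\Omega)^{\Br_\nr(X)}$, a $k$-rational point of $X$ approximates this adelic point. It yields a Galois $G$-algebra $K$ that is split at each $v\in S$ and whose decomposition subgroup at each $v_i$ contains a conjugate of $g_i$; hence $\Gal(K/k) \supseteq \langle g_1,\dots,g_r\rangle = G$, so $K$ is a genuine field extension with $\Gal(K/k) \cong G$. The rational point simultaneously provides $x_i \in K^\times$ with $N_{K/k}(x_i) = a_i$, so $\mathcal{A} \subseteq N_{K/k}(K^\times)$. The chief technical obstacle lies in coordinating the three constraints on the adelic point---split at $S$, forcing $K$ to be a field via Chebotarev at auxiliary places, and orthogonality to $\Br_\nr(X)$---which is resolved by the infinitude of places of $k$ outside finite sets and by the finiteness of $\Br_\nr(X)/\Br(k)$ afforded by rational connectedness.
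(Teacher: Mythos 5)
Your construction of $X$ is exactly the one in the paper's proof of Theorem~\ref{th:normfromsupersolvable}: the varieties $V_i$ are the paper's $T^{\alpha_i}$, $Y=\SL_N\times\prod_i V_i$ is a homogeneous space of $\SL_N\times\prod_i T^{(1)}$ with trivial stabilisers on which $G$ acts freely, and Corollary~\ref{cor:quotientsRC} (via Example~\ref{example:quotientlinear}) gives density of $X(k)$ in $X(k_\Omega)^{\Br_{\nr}(X)}$; the identification of $k$-points of $X$ with $G$-algebras $K$ together with elements of norm $a_i$ is also as in the paper. The endgame, however, has two genuine gaps. First, the group theory: your local conditions at $v_1,\dots,v_r$ only force the image $H\subseteq G$ of the classifying homomorphism of the torsor $\pi^{-1}(x)$ to contain \emph{some} conjugate of each $g_i$, and a subgroup containing one conjugate of each member of a generating set need not be all of $G$ (in $S_3$, with $g_1=(12)$, $g_2=(13)$, the subgroup $\langle(12)\rangle$ contains a conjugate of each). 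The standard repair is to impose, for \emph{every} conjugacy class of $G$, an auxiliary place whose local torsor has Frobenius in that class, and to invoke Jordan's lemma (a proper subgroup cannot meet every conjugacy class); alternatively, do what the paper does and apply Ekedahl's version of the Hilbert irreducibility theorem \cite[Lemme~1]{harariquelques}, which directly produces rational points $x$ with $\pi^{-1}(x)$ irreducible while approximating prescribed local points, with no auxiliary places and no appeal to Shafarevich's theorem (which is in any case superfluous: one only needs local cyclic extensions, not restrictions of a global $G$-extension).

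Second, the Brauer--Manin step: Harari's formal lemma does not let you adjust an adelic point at auxiliary places so as to become orthogonal to \emph{unramified} classes, since at all but finitely many places such classes evaluate trivially on every local point; moving components there changes nothing, and at the remaining finitely many places you have no surjectivity of the evaluation maps. The paper's mechanism is different: since $Y(k)\neq\emptyset$ one fixes $y_0\in Y(k)$ and a finite set $S_0$ (containing the bad places) such that any adelic point agreeing with $\pi(y_0)$ on $S_0$ lies in $X(k_\Omega)^{\Br_{\nr}(X)}$ \cite[Remarks~2.4 (i)--(ii)]{wittenbergslc}; anchoring at the image of a global point both guarantees orthogonality (by reciprocity plus good-reduction vanishing) and realises the splitting condition at $S$ for free, because the fibre over $\pi(y_0)$ is the split torsor. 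Your proposal can be repaired along these lines (anchor at $\pi(y_0)$ at all bad places and at $S$, and place the conjugacy-class conditions only at places of good reduction not dividing the $a_i$, where the prescribed unramified local torsors do support local points of $X$ because units are local norms from unramified extensions and the unramified evaluations of $\Br_{\nr}(X)$ vanish), but as written the adjustment step and the generation argument do not go through.
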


More applications are described in~\S\ref{sec:applications}.

\subsection{Notation and terminology}\label{subsec:notation}

We fix once and for all a field~$k$ of characteristic~$0$
and an algebraic closure~$\bar k$ of~$k$.
A \emph{variety} is a separated scheme of finite type over a field.
Somewhat unconventionally,
we shall say that a variety~$X$
over~$k$ is \emph{rationally connected}
if the smooth proper varieties over~$\bar k$ that are birationally equivalent to~$X_{\bar k}$ are rationally connected
in the sense of Campana, Kollár, Miyaoka and Mori
(see \cite[Chapter~IV]{kollarbook}).
If~$X$ is a smooth irreducible variety over~$k$,
we denote by $\Br_{\nr}(X) \subseteq \Br(X)$ the unramified Brauer group of~$k(X)/k$
(see \cite[\textsection6.2]{ctskobook}).

The words ``torsor'', ``action'', ``homogeneous space'' will refer
to left torsors, left actions, left homogeneous spaces, unless indicated otherwise.
An \emph{outer action} of a profinite group~$\Gamma$ on a discrete group~$H$
is a continuous group homomorphism  $\Gamma \to \Out(H)$,
where we endow the group $\Out(H)$ of outer automorphisms of~$H$ with
the topology induced by the compact-open topology on~$\Aut(H)$.

When~$G$ is an algebraic group over~$k$, we denote by $H^1(k,G)$
the first non-abelian Galois cohomology pointed set (see \cite{serrecg}).
When we write $[\sigma]$
for
 an element of this set, we mean that~$\sigma$
is a cocycle representing the cohomology class~$[\sigma]$.

When~$k$ is a number field, we denote by~$\Omega$ the set of places of~$k$
and by~$k_v$
the completion of~$k$ at $v \in \Omega$.
For any variety~$X$ over~$k$,
we let $X(k_{\Omega})=\prod_{v \in \Omega}X(k_v)$, endow
this set with the product of the $v$\nobreakdash-adic
topologies, and when~$X$ is smooth and irreducible, we denote by $X(k_{\Omega})^{\Br_{\nr}(X)} \subseteq X(k_\Omega)$ the
Brauer--Manin set (see \cite[\textsection5.2]{skobook}).

\subsection{Acknowledgment}

We are grateful to David Harari, to Dasheng Wei and to the referee for their comments on a first version of this article.

\section{Finite descent types}
\label{sec:finitedescenttypes}

We recall that~$k$ denotes a field of characteristic~$0$.
We fix, until the end of~\textsection\ref{sec:finitedescenttypes},
a smooth and geometrically irreducible variety~$X$ over~$k$.

\subsection{Finite descent types and torsors}
\label{subsec:finitedescenttypes}

If~$G$ is a finite \'etale group scheme over~$k$ and $Y \to X$ is a torsor under~$G$
such that~$Y$ is geometrically irreducible over~$k$, the function field extension $\bar k(Y)/k(X)$
is Galois (with Galois group $G(\bar k)\rtimes \Gal(\bar k/k)$).  This remark motivates the following definition.

\begin{defn}
A \emph{finite descent type on~$X$}
is an irreducible finite étale $X_{\bar k}$\nobreakdash-scheme~$\bar Y$ that is
Galois over~$X$ (i.e.\ such that the function field extension $\bar k(\bar Y)/k(X)$ is Galois).
\end{defn}

Equivalently, a finite descent type on~$X$
is an irreducible finite \'etale Galois $X_{\bar k}$\nobreakdash-scheme~$\bar Y$
such that the natural morphism $\Aut(\bar Y/X) \to \Gal(\bar k/k)$ is surjective.

\begin{defn}
\label{defn:torsoroftype}
Given a finite descent type~$\bar Y$ on~$X$,
a \emph{torsor of type~$\bar Y$} is an $X$\nobreakdash-scheme~$Y$
such that the $X_{\bar k}$\nobreakdash-schemes $Y_{\bar k}$ and~$\bar Y$
are isomorphic.
\end{defn}

Although Definition~\ref{defn:torsoroftype}
does not make reference to a group scheme,
the name ``torsor'' is justified by the remark that
any torsor $Y \to X$ of type~$\bar Y$
in the above sense
is canonically
a torsor under the
finite \'etale group scheme~$G$ over~$k$ defined by $G(\bar k)=\Aut(Y_{\bar k}/X_{\bar k})$.
We warn the reader, though, that two torsors of type~$\bar Y$ are not,
in general, torsors under isomorphic group schemes: the underlying finite group is always isomorphic to
$\Aut(\bar Y/X_{\bar k})$, but
the action of $\Gal(\bar k/k)$ on $\Aut(\bar Y/X_{\bar k})$ depends,
 in general, on the choice of $Y\to X$. As we shall see
in Proposition~\ref{prop:nonabcoh}~(ii), it is nevertheless true that two torsors of type~$\bar Y$ are torsors under group schemes
that are inner forms of each other.

Let~$\bar Y$ be a finite descent type on~$X$.
The short exact sequence of profinite groups
\begin{align}
\label{eq:gautgal}
1 \to \Aut(\bar Y/X_{\bar k}) \to \Aut(\bar Y/X) \to \Gal(\bar k/k) \to 1
\end{align}
induces a
 continuous outer action of $\Gal(\bar k/k)$
on the finite group $\bar G=\Aut(\bar Y/X_{\bar k})$.
This outer action, together with the natural action of $\Gal(\bar k/k)$
on the scheme~$X_{\bar k}$, induces, in turn,
a continuous action of $\Gal(\bar k/k)$ on the pointed set $H^1_\et(X_{\bar k}, \bar G)$
of isomorphism classes of torsors under~$\bar G$
over~$X_{\bar k}$ (notation justified by \cite[Chapter~III, Corollary~4.7]{milneet}).
With respect to this action,
the class of the $\bar G$\nobreakdash-torsor $\bar Y \to X_{\bar k}$ is invariant.

The next two propositions 
provide a group-theoretic point of view on Definition~\ref{defn:torsoroftype}
in terms of the short exact sequence~\eqref{eq:gautgal}.
When we speak of a \emph{splitting} of~\eqref{eq:gautgal}, we shall always mean a continuous homomorphism
$\Gal(\bar k/k)\to \Aut(\bar Y/X)$ which is a section of the projection
$\Aut(\bar Y/X) \to \Gal(\bar k/k)$.

\begin{prop}
\label{prop:splittings}
Let~$\bar Y$ be a finite descent type on~$X$.
\begin{enumerate}[label={\upshape(\roman*)}]
\itemsep=1ex
\item
Splittings of~\eqref{eq:gautgal}
are in one-to-one correspondence with
isomorphism classes of torsors $Y\to X$ of type~$\bar Y$
endowed with
an $X_{\bar k}$\nobreakdash-isomorphism $\iota: Y_{\bar k} \isoto \bar Y$.
\item
Splittings of~\eqref{eq:gautgal} up to conjugation by
$\Aut(\bar Y/X_{\bar k})$ are in one-to-one correspondence with
isomorphism classes of torsors $Y\to X$ of type~$\bar Y$.
\item
Splittings of~\eqref{eq:gautgal} exist if~$X(k)\neq\emptyset$.
\end{enumerate}
\end{prop}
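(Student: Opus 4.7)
The plan is to derive (i) from effective Galois descent for finite étale covers, deduce (ii) from (i) by forgetting the trivialisation, and construct a splitting in (iii) by normalising with respect to a chosen geometric point above~$x_0$.

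For (i), a splitting $s\colon \Gal(\bar k/k) \to \Aut(\bar Y/X)$ equips $\bar Y$ with a continuous action of $\Gal(\bar k/k)$ by scheme automorphisms, covering the natural Galois action on $X_{\bar k}$; this is precisely a Galois descent datum on the $X_{\bar k}$-scheme $\bar Y$. Since $\bar Y \to X_{\bar k}$ is finite, hence affine, the datum is effective and yields a finite étale $X$-scheme $Y$ together with a canonical $X_{\bar k}$-isomorphism $\iota\colon Y_{\bar k} \isoto \bar Y$ identifying the descent data; such a $Y$ is a torsor of type $\bar Y$ in the sense of Definition~\ref{defn:torsoroftype}. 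Conversely, every pair $(Y,\iota)$ transports the tautological descent datum on $Y_{\bar k}$ to a continuous Galois action on $\bar Y$ by $X$-automorphisms, i.e., a splitting of~\eqref{eq:gautgal}. These two constructions are mutually inverse on isomorphism classes.

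Statement (ii) follows immediately: replacing $\iota$ by $g \circ \iota$ for some $g \in \Aut(\bar Y/X_{\bar k})$ has the effect of conjugating the associated splitting by $g$, and conversely; hence forgetting the trivialisation amounts precisely to quotienting splittings by $\bar G$-conjugation.

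For (iii), fix $x_0 \in X(k)$ and let $\bar x_0 \in X(\bar k)$ be the induced geometric point, which is fixed by the natural $\Gal(\bar k/k)$-action on $X_{\bar k}$. Choose a geometric point $\bar y_0 \in \bar Y(\bar k)$ above~$\bar x_0$. Since the connected Galois cover $\bar Y \to X_{\bar k}$ is a torsor under $\bar G = \Aut(\bar Y/X_{\bar k})$, this group acts simply transitively on the fibre $\bar Y_{\bar x_0}$. For each $\sigma \in \Gal(\bar k/k)$, pick any lift $\phi \in \Aut(\bar Y/X)$ of $\sigma$; then $\phi \circ \bar y_0$ lies in $\bar Y_{\bar x_0}$ (as $\phi$ covers $\sigma$ and $\sigma$ fixes $\bar x_0$), so there is a unique $g \in \bar G$ with $(g^{-1}\phi) \circ \bar y_0 = \bar y_0$. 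Setting $s(\sigma) := g^{-1}\phi$ defines the unique lift of $\sigma$ fixing $\bar y_0$. Since $s(\sigma) s(\tau)$ also lifts $\sigma\tau$ and fixes $\bar y_0$, uniqueness forces $s(\sigma\tau) = s(\sigma)s(\tau)$; continuity holds because $s$ factors through the finite Galois group of any finite subextension of $\bar k/k$ over which $\bar Y$ is defined. The main technical checkpoint is the effectiveness of Galois descent invoked in (i), which is standard for affine—in particular, finite—morphisms.
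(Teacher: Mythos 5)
Your argument is correct, but it follows a somewhat different route from the paper. For (i) the paper works entirely with function fields: pairs $(Y,\iota)$ are matched with intermediate fields $k(X)\subset L\subset \bar k(\bar Y)$ satisfying $L\cap\bar k=k$ and $L\bar k=\bar k(\bar Y)$, and these are matched with splittings by ordinary Galois theory of the (pro)finite Galois extension $\bar k(\bar Y)/k(X)$; you instead phrase a splitting as a continuous semilinear $\Gal(\bar k/k)$-action on the $X_{\bar k}$-scheme $\bar Y$ and invoke effectivity of Galois descent for affine (finite) morphisms. The two are essentially equivalent in substance (the paper's smoothness/irreducibility hypotheses are what make the field-theoretic dictionary work; your descent argument needs affineness plus continuity to reduce to a finite subextension), but your version is more scheme-theoretic and arguably more transparent about where continuity enters. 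Your (ii) is the same reduction to (i) as in the paper. For (iii) the paper simply cites an external reference, whereas you give a self-contained construction: the unique lift of each $\sigma$ fixing a chosen geometric point $\bar y_0$ above a rational point, using simple transitivity of $\Aut(\bar Y/X_{\bar k})$ on the fibre; this is a legitimate (and standard) proof and is presumably close to the cited argument.

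One sentence needs repair: in (iii) you write that continuity holds because ``$s$ factors through the finite Galois group of any finite subextension over which $\bar Y$ is defined''. A section of \eqref{eq:gautgal} is injective, so it cannot factor through a finite group. The correct justification is: choose a finite extension $k_1/k$ over which both the cover $\bar Y\to X_{\bar k}$ and the point $\bar y_0$ are defined; on the open subgroup $\Gal(\bar k/k_1)$ your $s$ coincides, by the uniqueness of the lift fixing $\bar y_0$, with the canonical section coming from the $k_1$-model, which is continuous; and a homomorphism of topological groups that is continuous on an open subgroup is continuous. With that adjustment the proof is complete.
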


\begin{proof}
Assertion~(i) results from the fact that such pairs~$(Y,\iota)$ correspond
to subextensions $k(X) \subset L \subset \bar k(\bar Y)$ such that $L \cap \bar k=k$
and $\bar k(\bar Y)=L\bar k$, and from Galois theory.
Assertion~(ii) follows from~(i).
For~(iii), see \cite[Proposition~2.5]{wittenbergslc}.
\end{proof}

\begin{prop}
\label{prop:nonabcoh}
Let~$\bar Y$ be a finite descent type on~$X$
and set $\bar G=\Aut(\bar Y/X_{\bar k})$.
\begin{enumerate}[label={\upshape(\roman*)}]
\itemsep=1ex
\item
Let us fix a pair $(Y,\iota)$ as in Proposition~\ref{prop:splittings}~(i) corresponding to a splitting~$s$
of~\eqref{eq:gautgal}.
Let~$G$ be the finite \'etale group scheme over~$k$ defined by $G(\bar k)=\bar G$,
with the continuous action of $\Gal(\bar k/k)$ on~$\bar G$ by conjugation through~$s$.
Then the natural action of~$\bar G$ on~$\bar Y$ descends to an action of~$G$ on the $X$\nobreakdash-scheme~$Y$,
making it a torsor under~$G$.
\item
Let us fix another pair $(Y',\iota')$, corresponding to another splitting~$s'$
of~\eqref{eq:gautgal}.
Let~$G'$ denote the corresponding group scheme, as in~(i).
Let~$\sigma$ be the cocycle
\begin{align*}
s's^{-1}:\Gal(\bar k/k)\to G(\bar k)\rlap{.}
\end{align*}
There are compatible canonical isomorphisms of $k$\nobreakdash-group schemes $G'\simeq{}G^\sigma$
 and of $X$\nobreakdash-schemes $Y'\simeq{}Y^\sigma$,
where $G^\sigma$ denotes the inner twist of the group scheme~$G$ by~$\sigma$
and $Y^\sigma$ the twist of the torsor~$Y$ by~$\sigma$
(see \cite[Lemma~2.2.3]{skobook}).
\end{enumerate}
\end{prop}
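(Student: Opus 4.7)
My plan is to unfold the definitions of the group scheme~$G$, the torsor~$Y$, the inner twist~$G^\sigma$, and the twisted torsor~$Y^\sigma$ in terms of Galois descent data on the $\bar k$-scheme~$\bar Y$, and to observe that the asserted compatibilities then follow from direct formal calculations. Throughout, I would use Proposition~\ref{prop:splittings}~(i) to identify the splitting~$s$ of~\eqref{eq:gautgal} with the descent datum on~$\bar Y$ coming from the pair~$(Y,\iota)$: each $\gamma \in \Gal(\bar k/k)$ acts on $Y_{\bar k}=\bar Y$ as the $X$-automorphism~$s(\gamma)$, and similarly for~$(Y',\iota')$ and~$s'$.

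For~(i), the natural $\bar G$-action on~$\bar Y$ by $X_{\bar k}$-automorphisms will descend to a $k$-morphism $G \times_k Y \to Y$ once Galois equivariance is established. On $\bar k$-points this amounts to the identity
\begin{align*}
s(\gamma)(g\cdot y) = (s(\gamma)\, g\, s(\gamma)^{-1}) \cdot (s(\gamma)\cdot y),
\end{align*}
which holds tautologically once the Galois action on $G(\bar k)=\bar G$ is set to be $\gamma\cdot g = s(\gamma) g s(\gamma)^{-1}$, as in the statement. To conclude that $Y \to X$ is then a torsor under~$G$, I would check that the action map $G \times_k Y \to Y \times_X Y$, $(g,y)\mapsto(gy,y)$, is an isomorphism after base change to~$\bar k$, which reduces to the hypothesis that $\bar Y \to X_{\bar k}$ is a $\bar G$-torsor.

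For~(ii), I would first verify that $\sigma = s's^{-1}$ is a continuous $1$-cocycle on $\Gal(\bar k/k)$ with values in~$G(\bar k)$ (where $G$ carries the Galois action coming from~$s$): this is a short multiplicativity computation using that both $s$ and~$s'$ split~\eqref{eq:gautgal}. The identification $G' \simeq G^\sigma$ is then immediate, both being~$\bar G$ with Galois action sending $\gamma$ to $\Inn(s'(\gamma))$, since by construction $\Inn(\sigma(\gamma)) \circ \Inn(s(\gamma)) = \Inn(s'(\gamma))$. For the compatible identification $Y' \simeq Y^\sigma$, I would identify both $Y_{\bar k}$ and $Y'_{\bar k}$ with~$\bar Y$ via~$\iota$ and~$\iota'$, and observe that the identity on~$\bar Y$ then defines an $X_{\bar k}$-isomorphism $Y_{\bar k} \isoto Y'_{\bar k}$ whose failure to be Galois-equivariant is measured exactly by~$\sigma$. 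By the explicit formula for the twist of a torsor recalled in~\cite[Lemma~2.2.3]{skobook}, this is precisely the defining property of~$Y^\sigma$ as an $X$-scheme, and compatibility of the $G^\sigma$- and $G'$-actions on it then follows from the same calculation as in part~(i).

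I expect no serious obstacle. The only delicate point will be to keep track of conventions on inner twists of group schemes, on twists of torsors, and on the order of composition of $X$-automorphisms of~$\bar Y$, so as to remain consistent with the normalisations of~\cite{skobook}.
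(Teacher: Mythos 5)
Your proposal is correct and is essentially the paper's own argument written out in detail: the paper's proof likewise consists of unwinding the correspondence of Proposition~\ref{prop:splittings}~(i), i.e.\ noting that the natural Galois action on the scheme~$Y_{\bar k}$ coincides, by transport of structure via~$\iota$, with the action of $\Gal(\bar k/k)$ on~$\bar Y$ given by~$s$, after which both assertions reduce to the formal cocycle and equivariance computations you carry out.
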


\begin{proof}
Both assertions follow from unwinding the correspondence
of Proposition~\ref{prop:splittings}~(i) and noting that the natural action of~$\Gal(\bar k/k)$
on the scheme~$Y_{\bar k}$ coincides with the action obtained by transport
of structure, via~$\iota$, from the action of~$\Gal(\bar k/k)$ on~$\bar Y$ given by~$s$.
\end{proof}

\begin{rmk}
Let us assume that~$\bar G$ is abelian.  In this case, the outer action of $\Gal(\bar k/k)$ on~$\bar G$
induced by~\eqref{eq:gautgal} is an actual action,
thanks to which~$\bar G$ canonically descends to a finite \'etale group scheme~$G$ over~$k$.
In addition, any torsor $Y\to X$ of type~$\bar Y$ is canonically a torsor under~$G$
since
any two $X_{\bar k}$\nobreakdash-isomorphisms $Y_{\bar k}\isoto \bar Y$ give rise to the same
isomorphism $\Aut(\bar Y/X_{\bar k}) \simeq \Aut(Y_{\bar k}/X_{\bar k})$.
Thus, the abelian situation is summarised by
the natural map
\begin{align}
\label{eq:hsmap}
H^1_\et(X,G)\to H^0(k,H^1_\et(X_{\bar k},\bar G))\rlap{,}
\end{align}
which appears in the Hochschild--Serre spectral sequence and which sends the isomorphism class of
a $G$\nobreakdash-torsor over~$X$ to (the isomorphism class of) its \emph{type}.  We see, in particular,
that the terminology of Definition~\ref{defn:torsoroftype}
is consistent with the one introduced by
Colliot-Thélène and Sansuc \cite[\textsection2]{ctsandescent2}
in the theory of descent under groups of multiplicative type,
as well as with the notion of extended type of
Harari and Skorobogatov~\cite{harskoopen} in the finite case.
\end{rmk}

\subsection{Supersolvability}

The notion of supersolvability for finite groups endowed with an outer action
of $\Gal(\bar k/k)$,
first introduced in
 \cite[Définition~6.4]{hwzceh}, plays
a central r\^ole in the present article.
We recall it below.

\begin{defn}
\label{def:supersolvable}
A finite group~$\bar G$ endowed with an outer action of $\Gal(\bar k/k)$
is said to be
\emph{supersolvable} if there exist an integer~$n$
and
a sequence
\begin{align*}
\{1\} = \bar G_0 \subseteq \bar G_1 \subseteq \dots \subseteq \bar G_n = \bar G
\end{align*}
of normal subgroups of~$\bar G$
such that
for all $i \in \{1,\dots,n\}$, the quotient
 $\bar G_i/\bar G_{i-1}$ is cyclic
and the subgroup~$\bar G_i$ is stable under the outer action of $\Gal(\bar k/k)$.
(A normal subgroup is said to be stable under an outer automorphism
if it is stable under any automorphism that lifts the given outer automorphism.
This is independent of the choice of the lift.)

A~finite descent type~$\bar Y$ on~$X$ is said to be \emph{supersolvable} if
the finite group $\Aut(\bar Y/X_{\bar k})$, endowed with the outer action
of $\Gal(\bar k/k)$ induced by~\eqref{eq:gautgal}, is supersolvable in this
sense.

When the integer~$n$ is fixed, we say that~$\bar G$, or~$\bar Y$, is
\emph{supersolvable of class~$n$}.
\end{defn}

\section{Supersolvable descent}

We are now in a position to state and prove our main theorem.
We say that a finite descent type~$\bar Y$ on~$X$ is \emph{rationally connected}
if~$\bar Y$ is a rationally connected variety over~$\bar k$ in the sense of \S\ref{subsec:notation}. 

\begin{thm}
\label{th:main}
Let $X$ be a smooth and geometrically irreducible variety over a number field~$k$.
Let~$\bar Y$ be a rationally connected supersolvable finite descent type on~$X$.
Then
\begin{align}
\label{eq:th:main}
X(k_{\Omega})^{\Br_{\nr}(X)}= \overline{\bigcup_{f: Y \to X} f\Big(Y(k_{\Omega})^{\Br_{\nr}(Y)}\Big)}\rlap,
\end{align}
where $f:Y\to X$ ranges over
the isomorphism classes of torsors $Y \to X$ of type~$\bar Y$
and $\mkern1mu\overline{\mkern-1muM}$ denotes the topological closure of a subset~$M$ of~$X(k_\Omega)$.
\end{thm}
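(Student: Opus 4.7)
The plan is to argue by induction on the supersolvable class~$n$ of the descent type~$\bar Y$. In the base case $n=1$, the group $\bar G=\Aut(\bar Y/X_{\bar k})$ is cyclic, hence abelian; by the remark following Proposition~\ref{prop:nonabcoh}, its outer Galois action is an honest action and $\bar G$ canonically descends to a finite étale commutative $k$\nobreakdash-group scheme~$G$, so that torsors of type~$\bar Y$ coincide with $G$\nobreakdash-torsors over~$X$ of a well-defined classical type~$\lambda$. I would embed~$G$ into a $k$\nobreakdash-torus by means of a short exact sequence $1 \to G \to T \to T' \to 1$ with~$T$ and~$T'$ tori (for instance obtained from a flasque resolution of the character module of~$G$). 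The rational connectedness of~$\bar Y$ propagates to~$X$, so $\Br_\nr(X)/\Br(k)$ is finite and one may apply the non-proper version of Theorem~\ref{thm:zceh-descent} discussed in the introduction to~$X$, $T$, and the image of~$\lambda$ in $H^1_\et(X_{\bar k},T_{\bar k})$. A diagram chase comparing $G$\nobreakdash-torsors to $T$\nobreakdash-torsors of the specified type, together with the corresponding comparison of Brauer--Manin sets, then transfers the equality from~$T$ to~$G$, yielding the cyclic case of~\eqref{eq:th:main}.

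For the inductive step~$n\ge 2$, fix a filtration $\{1\}=\bar G_0\subseteq\dots\subseteq\bar G_n=\bar G$ as in Definition~\ref{def:supersolvable}, and set $\bar Z=\bar Y/\bar G_{n-1}$. Normality of~$\bar G_{n-1}$ and its stability under the outer Galois action make~$\bar Z$ into a finite descent type on~$X$ with cyclic automorphism group $\bar G/\bar G_{n-1}$, hence of class~$1$; it is still rationally connected, since surjective images of smooth proper rationally connected varieties over~$\bar k$ are rationally connected. Applying the base case to~$\bar Z$ yields
\begin{align*}
X(k_\Omega)^{\Br_\nr(X)} = \overline{\bigcup_{f\colon Z\to X} f\big(Z(k_\Omega)^{\Br_\nr(Z)}\big)}\rlap,
\end{align*}
where~$f$ ranges over torsors of type~$\bar Z$. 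For each such~$Z$, the map $\bar Y\to Z_{\bar k}=\bar Z$ makes~$\bar Y$ into a rationally connected finite descent type on~$Z$: the extension $\bar k(\bar Y)/k(Z)$ is automatically Galois as an intermediate extension of the Galois extension $\bar k(\bar Y)/k(X)$; the associated finite group is~$\bar G_{n-1}$; and the outer Galois action it carries is the restriction of the outer action on~$\bar G$, so that the filtration $\bar G_0\subseteq\dots\subseteq\bar G_{n-1}$ witnesses that~$\bar Y$ is supersolvable of class~$n-1$ on~$Z$. The inductive hypothesis applies. Combining it with the previous equality, using the continuity of each~$f$ and the fact that every torsor $Y\to X$ of type~$\bar Y$ factors as $Y\to Z\to X$ with~$Z\to X$ of type~$\bar Z$ and~$Y\to Z$ of type~$\bar Y$ on~$Z$ (and conversely), gives~\eqref{eq:th:main} for~$\bar Y$.

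The hard part of this strategy is the base case. Transferring Theorem~\ref{thm:zceh-descent} from torus torsors to finite cyclic torsors is not automatic: the map $H^1_\et(X,G)\to H^1_\et(X,T)$ is in general neither injective nor surjective, so one must carefully track types and Brauer classes through the sequence $1\to G\to T\to T'\to 1$, cope with the unramified Brauer group~$\Br_\nr$ rather than~$\Br_1$, and work throughout in the non-proper setting forced by the rational connectedness of~$\bar Y$ (which prevents~$X$ from being proper whenever~$\bar Y$ is non-trivial, since smooth proper rationally connected varieties over~$\bar k$ are simply connected). The inductive engine above, by contrast, is essentially formal once the cyclic case and the compatibility of rational connectedness and supersolvability with the factorisation $\bar Y\to\bar Z\to X_{\bar k}$ have been established.
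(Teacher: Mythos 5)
Your inductive step is essentially the paper's own argument (quotient by $\bar G_{n-1}$, apply the cyclic case to $\bar Y'=\bar Y/\bar G_{n-1}$ over~$X$, view $\bar Y$ as a class-$(n-1)$ supersolvable type over each intermediate torsor, and compose; only the easy direction of the factorisation claim is actually needed, since compositions $Y\to Y'\to X$ are torsors of type~$\bar Y$ and the reverse inclusion in~\eqref{eq:th:main} is formal). The gap is in the base case, and it is exactly at the point you flag as ``a diagram chase'': there is no purely cohomological transfer from the torus-descent statement for~$T$ to the finite cyclic group~$G$. If you apply the non-proper version of Theorem~\ref{thm:zceh-descent} to~$X$ with the torus~$T$ from an embedding $1\to G\to T\to Q\to 1$, the $T$\nobreakdash-torsors of the pushed-forward type are (twists of) the contracted product $Z=Y\times_k^G T$; knowing that $Z(k_\Omega)^{\Br_\nr(Z)}$ is nonempty (with image in a prescribed neighbourhood of~$X(k_\Omega)$) says nothing formal about the sets $Y^\sigma(k_\Omega)^{\Br_\nr(Y^\sigma)}$, because the $Y^\sigma$ are not obtained from~$Z$ by any cohomological operation: they are the fibres of the second projection $\pi\colon Z\to Q$ over rational points $q\in Q(k)$, with $[\sigma]$ the image of~$q$ under the boundary map of the resolution. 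Producing a fibre $Z_q$ that still carries local points orthogonal to its own unramified Brauer group is an arithmetic fibration problem, not a comparison of $H^1_\et(X,G)$ with $H^1_\et(X,T)$ and of Brauer classes.

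Concretely, the paper's proof of the cyclic case has three ingredients that your plan does not supply: (a) an existence statement (Lemma~\ref{lem:existsonef}, via the Colliot-Th\'el\`ene--Sansuc injectivity of $H^2(k,G)\to H^2_\et(X,G)$ when $X(k_\Omega)^{\Ba(X)}\neq\emptyset$) showing that the type $\bar Y$ descends to an actual $G$\nobreakdash-torsor $Y\to X$ at all; (b) the choice of the quotient torus~$Q$ to be \emph{quasi-trivial} (Lemma~\ref{lem:resolutionquasitrivial}), which is what allows a class in $H^1(k,T)$ produced by torus descent applied to $Z\to X$ to be lifted to $H^1(k,G)$ via $H^1(k,Q)=0$, and which makes the base of~$\pi$ eligible for the fibration theorem; and (c) the fibration method over the quasi-trivial torus (Theorem~\ref{th:fibration}, resting on Harari's results), whose rational-section hypothesis over~$\bar k$ is verified precisely because $\bar G$ is cyclic, via the Kummer sequence and Graber--Harris--Starr (Lemma~\ref{lem:usescyclic}). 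This last step is where cyclicity is genuinely used and where the analytic input (Harari's formal lemma, Hilbert-type arguments inside Theorem~\ref{th:hararifibration}) enters; it cannot be replaced by tracking types and Brauer classes through $1\to G\to T\to Q\to 1$. Your inductive engine is fine, but without something playing the role of (a)--(c) the base case, and hence the proof, is incomplete.
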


\begin{rmks}
\label{rmk:firstremark}
(i)
The existence of a rationally connected finite descent type on~$X$, assumed in Theorem~\ref{th:main},
implies that~$X$ itself is rationally connected.

(ii)
When~$X$ is rationally connected,
the statement of Theorem~\ref{th:main} can be expected to hold
for an arbitrary finite descent type~$\bar Y$ on~$X$.
Indeed, the equality~\eqref{eq:th:main} would result from Conjecture~\ref{conj:ct}
(see \cite[Proposition~2.5]{wittenbergslc}).

(iii)
By Proposition~\ref{prop:splittings}~(ii) and Proposition~\ref{prop:nonabcoh},
the equality~\eqref{eq:th:main}
can be reformulated as the following
two-part statement:
\begin{itemize}
\item
if $X(k_{\Omega})^{\Br_{\nr}(X)}\neq\emptyset$,
then
the natural outer action of~$\Gal(\bar k/k)$ on
$\bar G=\Aut(\bar Y/X_{\bar k})$
can be lifted to an actual continuous action,
in such a way that if~$G$ denotes the resulting finite \'etale group scheme over~$k$,
the $\bar G$\nobreakdash-torsor $\bar Y \to X_{\bar k}$
descends to a $G$\nobreakdash-torsor
$f:Y\to X$;
\item
fixing such~$G$ and~$Y$
and denoting by $f^\sigma:Y^\sigma \to X$ the
twist of the torsor $f:Y\to X$ by a cocycle~$\sigma$,
one has an equality
\begin{align*}
X(k_\Omega)^{\Br_{\nr}(X)}
=
\overline{\bigcup_{[\sigma] \in H^1(k,G)} f^\sigma\Big(Y^\sigma(k_\Omega)^{\Br_{\nr}(Y^\sigma)}\Big)}\rlap.
\end{align*}
\end{itemize}
\end{rmks}

The proof of Theorem~\ref{th:main} will be given
in~\textsection\textsection\ref{subsec:cyclicdescent}--\ref{subsec:general-case},
first in the case where~$G$ is cyclic in~\S\ref{subsec:cyclicdescent} and
then in the general case in~\S\ref{subsec:general-case}.
Before going into the proof,
let us illustrate Theorem~\ref{th:main} with the following special case.
We say that a finite \'etale group scheme~$G$ over~$k$
is \emph{supersolvable}
if $G(\bar k)$
is supersolvable in the sense of Definition~\ref{def:supersolvable}
with respect to the natural outer action
of $\Gal(\bar k/k)$ (which happens in this case to be an actual action).

\begin{cor}
\label{cor:quotientsRC}
Let~$Y$ be a smooth, quasi-projective rationally connected variety over a number field~$k$.
Let~$G$ be a supersolvable finite \'etale group scheme over~$k$, acting freely on~$Y$.
Let $X = Y/G$ denote the quotient.
If Conjecture~\ref{conj:ct} holds for~$Y^{\sigma}$ for all $[\sigma] \in H^1(k,G)$, then it holds for~$X$.
\end{cor}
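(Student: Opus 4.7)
The plan is to apply Theorem~\ref{th:main} with the finite descent type $\bar Y := Y_{\bar k}$ viewed as a cover of $X_{\bar k}$. First I would check that $X$ satisfies the hypotheses of Theorem~\ref{th:main}: since the finite étale group scheme~$G$ acts freely on the quasi-projective variety~$Y$, the quotient $X = Y/G$ exists as a smooth variety over~$k$ and the map $Y \to X$ is a finite étale $G$-torsor. Moreover $X_{\bar k}$ is irreducible, as the surjective image of the irreducible $Y_{\bar k}$ (here using that rationally connected varieties are geometrically irreducible in the sense of \S\ref{subsec:notation}). Taking $\bar Y = Y_{\bar k}$ as an $X_{\bar k}$\nobreakdash-scheme, it is irreducible, finite étale, and the function field extension $\bar k(\bar Y)/k(X)$ is Galois with group $G(\bar k)\rtimes \Gal(\bar k/k)$. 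Hence~$\bar Y$ is a finite descent type on~$X$; its group $\Aut(\bar Y/X_{\bar k})$ is canonically identified with $G(\bar k)$, and the outer Galois action provided by~\eqref{eq:gautgal} is the actual action on~$G(\bar k)$ induced by the $k$\nobreakdash-structure of~$G$. By hypothesis this makes~$\bar Y$ supersolvable, and~$\bar Y$ is rationally connected because~$Y$ is.

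Theorem~\ref{th:main} then yields
\begin{align*}
X(k_\Omega)^{\Br_{\nr}(X)} = \overline{\bigcup_{f:Y' \to X} f\bigl(Y'(k_\Omega)^{\Br_{\nr}(Y')}\bigr)}\rlap,
\end{align*}
where $Y' \to X$ ranges over isomorphism classes of torsors of type~$\bar Y$. The given torsor $Y \to X$ itself has type~$\bar Y$, so, by Proposition~\ref{prop:splittings}~(ii), Proposition~\ref{prop:nonabcoh}~(ii), and Remark~\ref{rmk:firstremark}, the torsors of type~$\bar Y$ up to isomorphism are in bijection with $H^1(k,G)$ via $[\sigma] \mapsto Y^\sigma$, with structural morphism $f^\sigma : Y^\sigma \to X$. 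The displayed identity therefore becomes
\begin{align*}
X(k_\Omega)^{\Br_{\nr}(X)} = \overline{\bigcup_{[\sigma]\in H^1(k,G)} f^\sigma\bigl(Y^\sigma(k_\Omega)^{\Br_{\nr}(Y^\sigma)}\bigr)}\rlap.
\end{align*}

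To conclude, I would combine this with the assumption that Conjecture~\ref{conj:ct} holds for each~$Y^\sigma$. For every $[\sigma] \in H^1(k,G)$, the set $Y^\sigma(k)$ is dense in $Y^\sigma(k_\Omega)^{\Br_{\nr}(Y^\sigma)}$; since~$f^\sigma$ is continuous on the space of adelic points (it is even finite), $f^\sigma(Y^\sigma(k))$ is dense in $f^\sigma(Y^\sigma(k_\Omega)^{\Br_{\nr}(Y^\sigma)})$. As $f^\sigma(Y^\sigma(k))\subseteq X(k)$, the closure of~$X(k)$ in $X(k_\Omega)$ contains each set $f^\sigma(Y^\sigma(k_\Omega)^{\Br_{\nr}(Y^\sigma)})$, hence also the closure of their union, which equals $X(k_\Omega)^{\Br_{\nr}(X)}$. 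This proves Conjecture~\ref{conj:ct} for~$X$. The substantive input here is, by design, Theorem~\ref{th:main} itself; beyond that, the only work is the identification of torsors of type~$\bar Y$ with the twists~$Y^\sigma$ and the routine density argument above, so no genuine obstacle remains.
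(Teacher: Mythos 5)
Your proposal is correct and follows essentially the same route as the paper: the paper likewise observes that $Y_{\bar k}$ is a rationally connected supersolvable finite descent type on $X=Y/G$ and then concludes directly from Theorem~\ref{th:main} together with Remark~\ref{rmk:firstremark}, which identifies the torsors of type $Y_{\bar k}$ with the twists $Y^\sigma$. Your extra verifications (irreducibility, the identification of the outer action with the actual Galois action on $G(\bar k)$, and the closing density argument) are just the details the paper leaves implicit.
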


\begin{proof}
The projection $Y \to X$ is a $G$\nobreakdash-torsor,
hence~$Y_{\bar k}$ is a supersolvable finite descent type on~$X$,
as remarked at the beginning of~\textsection\ref{subsec:finitedescenttypes}.
The conclusion now follows from Theorem~\ref{th:main}, in view of Remark~\ref{rmk:firstremark}~(iii).
\end{proof}

\begin{example}
\label{example:quotientlinear}
Suppose that~$Y$ is a homogeneous space of a connected linear algebraic
group~$L$ with connected geometric stabilisers, and that the finite
supersolvable group~$G$ acts compatibly on~$L$ as an algebraic group and on~$Y$ as a homogeneous space of~$L$,
with the action on~$Y$ being free.
For
each $[\sigma] \in H^1(k,G)$, the twisted variety~$Y^{\sigma}$ is then a
homogeneous space of the twisted algebraic group~$G^{\sigma}$, with connected
geometric stabilisers. As such, the variety~$Y^\sigma$ satisfies Conjecture~\ref{conj:ct},
according to Borovoi~\cite{borovoi}. It now follows from Corollary~\ref{cor:quotientsRC} that
Conjecture~\ref{conj:ct} holds for the quotient variety $X = Y/G$.
It should be noted that~$X$ is not itself, in general, a homogeneous space of a
linear group. This example will play a r\^ole in \S\ref{subsec:norms} below.
\end{example}

\subsection{Fibrations over tori}

The proof of Theorem~\ref{th:main} rests on the fibration method
via the following theorem,
 which is a slightly more precise version of \cite[Théorème~4.2~(ii)]{hwzceh}.
We recall that a variety is \emph{split} if it possesses an irreducible component of multiplicity~$1$
that is geometrically irreducible.

\newcommand{\citezceh}{see \cite[Théorème~4.2~(ii)]{hwzceh}}
\begin{thm}[\citezceh]
\label{th:fibration}
Let~$Q$ be a quasi-trivial torus over a number field~$k$. 
Let~$Z$ be a smooth irreducible variety over~$k$.
Let $\pi:Z \to Q$ be a dominant morphism satisfying the following assumptions:
\begin{enumerate}
\item the geometric generic fibre of~$\pi$ is rationally connected;
\item the fibres of~$\pi$ above the codimension~$1$ points of~$Q$ are split;
\item the morphism $\pi_{\bar k}:Z_{\bar k}\to Q_{\bar k}$ admits a rational section.
\end{enumerate}
For any dense open subset~$U$ of~$Q$ such that~$\pi$ is smooth over~$U$,
and for any Hilbert subset~$H$ of~$U$,
we have the equality 
\begin{align}
Z(k_\Omega)^{\Br_\nr(Z)}=\overline{\bigcup_{q \in U(k) \cap H} Z_q(k_\Omega)^{\Br_{\nr}(Z_q)}}
\end{align}
of subsets of~$Z(k_\Omega)$,
where~$Z_q=\pi^{-1}(q)$.
\end{thm}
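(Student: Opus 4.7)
The plan is to run the fibration method over the base~$Q$, exploiting three special features of a quasi-trivial torus: it is $k$-rational, it satisfies weak approximation, and $\Br(Q)/\Br(k)=0$. Hilbert irreducibility on~$Q$ is therefore compatible with weak approximation, and no Schinzel-type hypothesis is needed to produce a rational point on~$Q$ that lies in~$H$ and has prescribed local behaviour at finitely many places.

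Fix $(P_v)_{v\in\Omega}\in Z(k_\Omega)^{\Br_\nr(Z)}$, a finite set~$S$ of places, and open neighbourhoods $\Omega_v \subseteq Z(k_v)$ of~$P_v$ for $v\in S$; the aim is to produce $q\in U(k)\cap H$ and $(P'_v)\in Z_q(k_\Omega)^{\Br_\nr(Z_q)}$ with $P'_v\in\Omega_v$ for $v\in S$. The first step is a uniform comparison of Brauer groups. Condition~(1) implies that $\Br_\nr(Z)/\Br(k)$ is finite, while~(3) and the Leray spectral sequence for~$\pi_{\bar k}$ pin down the action of $\Gal(\bar k/k)$ on the geometric Picard module of the generic fibre of~$\pi$. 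Condition~(2) then ensures that for~$q$ in a Hilbert-open subset of~$Q(k)$, the restriction map from a suitable finite subgroup $B\subseteq \Br(Z)$ surjects onto $\Br_\nr(Z_q)/\Br(k)$. After shrinking~$H$ to lie in the complement of the corresponding thin set, one can transport Brauer--Manin information from~$Z$ to each such $Z_q$.

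Next I would apply Harari's formal lemma to enlarge~$S$ and modify $(P_v)$ at places outside~$S$ so that the resulting adelic point is orthogonal not merely to~$\Br_\nr(Z)$ but to the entire finite subgroup~$B$ above. Push the result forward via~$\pi$ to obtain an adelic point on~$Q$; weak approximation on~$Q$ combined with Hilbert irreducibility then produces $q\in U(k)\cap H$ arbitrarily close to $\pi(P_v)$ at the places of~$S$. At each $v\in S$, smoothness of~$\pi$ over~$U$ and the implicit function theorem yield $P'_v\in Z_q(k_v)\cap\Omega_v$; at the remaining places, the split fibres hypothesis~(2), together with Lang--Weil and Hensel's lemma, supplies local points on~$Z_q$. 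Orthogonality of the modified adelic point to~$B$ then forces $(P'_v)\in Z_q(k_\Omega)^{\Br_\nr(Z_q)}$ via the Brauer comparison of the first step.

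The main obstacle is the uniformity in~$q$ of this Brauer group comparison: $\Br_\nr$ is not automatically functorial under pullback of fibrations, and one must argue both that unramified classes on~$Z$ pull back to unramified classes on $Z_q$ for~$q$ in a Hilbert subset, and that every class in $\Br_\nr(Z_q)$ arises, modulo constants, from such a pullback. Conditions~(2) and~(3) are designed to achieve exactly this: the former controls ramification along exceptional fibres, the latter trivialises the monodromy acting on the Picard module of the generic fibre, and~(1) ensures finiteness of the group of Brauer classes to be tracked. This comparison is the technical heart of the fibration method for rationally connected fibres over a quasi-trivial base.
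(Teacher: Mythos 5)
Your outline reproduces the general shape of Harari's ``méthode formelle'', but the step you yourself identify as the technical heart is asserted rather than proved, and as stated it is not quite the right statement. You claim that for $q$ in a Hilbert subset the restriction of a fixed finite subgroup $B\subseteq\Br(Z)$ surjects onto $\Br_{\nr}(Z_q)$ modulo constants, and you attribute this to hypotheses (2) and (3). In general the generators of $\Br_{\nr}(Z_q)$ modulo constants specialise from the Brauer group of the generic fibre $Z_\eta$ (which is finite modulo $\Br(k(Q))$ thanks to (1)); these classes are typically ramified on $Z$ along the bad fibres, so they do not lie in $\Br(Z)$ but only in $\Br(\pi^{-1}(U'))$ for some smaller dense open $U'\subseteq Q$, and the uniformity in $q$ is precisely Harari's specialisation theorem (\cite{harariduke,hararifleches}), which is proved over a one-dimensional base; extending it directly to a base of arbitrary dimension is not a formality. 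Neither hypothesis does the job you assign to it: (3) is a convenience hypothesis (the paper points out it is stronger than the hypothesis actually used in \cite{hwzceh} and plays no role of ``trivialising monodromy on the Picard module''), while (2) is what guarantees local solubility of $Z_q$ at the intermediate places and dispenses with Schinzel-type hypotheses --- a step your sketch compresses into ``Lang--Weil and Hensel'', which only covers places of good reduction with large residue field. (Also, $\Br(Q)/\Br(k)\neq 0$ for the open torus --- already for $\Gm$ --- ; what is true and needed is $\Br_{\nr}(Q)=\Br(k)$ together with $k$-rationality, weak approximation and Ekedahl-type compatibility with Hilbert subsets.)

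The paper avoids proving any such higher-dimensional specialisation statement. It reduces Theorem~\ref{th:fibration} to Theorem~\ref{th:hararifibration}, a statement about fibrations over affine space, by the same argument that deduces Théorème~4.2~(ii) from Théorème~4.1~(ii) in \cite{hwzceh}; Theorem~\ref{th:hararifibration} is then proved by induction on $n$, the case $n=1$ being obtained by compactifying over $\P^1_k$ and invoking the known fibration results of \cite{hww}, and the induction step using the first projection $\A^n_k\to\A^1_k$, the Hilbert-subset lemma \cite[Lemma~8.12]{hwfibration} and the Graber--Harris--Starr theorem. In other words, the one-dimensional Harari machinery is quoted, not redone; your one-shot argument over $Q$ would require rebuilding it over a base of arbitrary dimension, which is exactly what the reduction-plus-induction structure is designed to avoid. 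To turn your proposal into a proof you would either have to supply that specialisation and local-solubility analysis over $Q$ in full, or perform a reduction to the curve case along the lines of the paper.
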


Hypothesis~(3) of Theorem~\ref{th:fibration} is stronger than the
hypothesis that appears in \cite[Théorème~4.2~(ii)]{hwzceh}.  The proof, however,
 only depends on the hypothesis formulated in \emph{loc.\ cit.}; we have
opted for stating
the theorem in this way
for the sake of simplicity.
With this minor difference put aside, Theorem~\ref{th:fibration} implies and refines
\cite[Théorème~4.2~(ii)]{hwzceh}.

The  exact argument used in \emph{loc.\ cit.}\ to deduce
\cite[Théorème~4.2~(ii)]{hwzceh} from \cite[Théorème~4.1~(ii)]{hwzceh} also reduces
Theorem~\ref{th:fibration} to the following theorem:

\begin{thm}
\label{th:hararifibration}
Let~$Z$ be a smooth irreducible variety over a number field~$k$,
endowed with a dominant morphism $\pi:Z \to \A^n_k$, for some $n\geq 1$, such that
\begin{enumerate}
\item the geometric generic fibre of~$\pi$ is rationally connected;
\item the fibres of~$\pi$ above the codimension~$1$ points of~$\A^n_k$ are split.
\end{enumerate}
For any dense open subset~$U$ of~$\A^n_k$ such that~$\pi$ is smooth over~$U$,
and for any Hilbert subset~$H$ of~$U$,
we have the equality 
\begin{align}
\label{eq:conclusionfibharari}
Z(k_\Omega)^{\Br_\nr(Z)}=\overline{\bigcup_{q \in U(k) \cap H} Z_q(k_\Omega)^{\Br_{\nr}(Z_q)}}
\end{align}
of subsets of~$Z(k_\Omega)$,
where~$Z_q=\pi^{-1}(q)$.
\end{thm}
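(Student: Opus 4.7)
The plan is to prove Theorem~\ref{th:hararifibration} by the fibration method originating with Harari, essentially along the same lines as \cite[Théorème~4.1~(ii)]{hwzceh}. The inclusion $\supseteq$ in~\eqref{eq:conclusionfibharari} is formal and holds without hypotheses~(1)--(2): for any $q\in U(k)\cap H$, smoothness of~$\pi$ over~$U$ realises $Z_q$ as a smooth closed subvariety of~$Z$, pullback along $Z_q\hookrightarrow Z$ maps $\Br_\nr(Z)$ into $\Br_\nr(Z_q)$, and functoriality of the Brauer--Manin pairing gives $Z_q(k_\Omega)^{\Br_\nr(Z_q)}\subseteq Z(k_\Omega)^{\Br_\nr(Z)}$; the target being closed in $Z(k_\Omega)$, the closure on the left is still contained in it.

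For the nontrivial inclusion, I fix $(P_v)_{v\in\Omega}\in Z(k_\Omega)^{\Br_\nr(Z)}$ together with a finite set of places $S\subset\Omega$ and open neighbourhoods $\mathcal{U}_v\ni P_v$ for $v\in S$. The goal is to produce $q\in U(k)\cap H$ and an adelic point $(P'_v)_{v\in\Omega}\in Z_q(k_\Omega)^{\Br_\nr(Z_q)}$ with $P'_v\in\mathcal{U}_v$ for every $v\in S$. The first step is to invoke Harari's formal lemma \cite[Theorem~13.4.3]{ctskobook} on a finite subgroup of $\Br_\nr(Z)$ large enough to absorb the ramification of all Brauer classes of interest along the codimension-one strata of~$\A^n_k$: after possibly enlarging~$S$ and re-choosing $(P_v)_{v\in S}$ at the newly added places, one may reduce to the situation where the constraint ``$P'_v\in \mathcal{U}_v$ for $v\in S$'' already captures the orthogonality conditions attached to that finite subgroup.

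Next, one applies an Ekedahl-type arithmetic refinement of Hilbert irreducibility to find $q\in U(k)\cap H$ arbitrarily $v$-adically close to $\pi(P_v)$ at every $v\in S$; smoothness of~$\pi$ over~$U$ and the implicit function theorem then lift~$q$ to a point $P'_v\in Z_q(k_v)\cap \mathcal{U}_v$ at each such~$v$. For the remaining places, one combines hypothesis~(2) with the Lang--Weil estimates and Hensel's lemma on a smooth integral model of~$\pi$: the splitness of codimension-one fibres guarantees that $Z_q$ has a local point at all but finitely many places, while at a small residual set of places one exploits the slack previously arranged by the formal lemma to pick arbitrary local points on~$Z_q$.

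The main obstacle, and the heart of the argument, is to verify that the adelic point $(P'_v)_{v\in\Omega}$ is orthogonal to all of $\Br_\nr(Z_q)$, rather than merely to the image of $\Br_\nr(Z)\to \Br_\nr(Z_q)$. Classes in the latter image are controlled by the orthogonality of $(P_v)$ on~$Z$ together with the closeness of $P'_v$ to~$P_v$ at places in~$S$ (and by the formal-lemma set-up at the auxiliary places). The classes in $\Br_\nr(Z_q)$ which are specific to the fibre require a second application of Harari's formal lemma, this time on~$Z_q$; here hypothesis~(1), the rational connectedness of the geometric generic fibre, is essential, since it ensures that $\Br(Z_{q,\bar k})$ modulo its ``constant'' part is finite and can be made uniformly controlled as~$q$ varies within a small $v$-adic neighbourhood. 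This interlocking of Ekedahl's theorem, the codimension-one splitness, and two applications of the formal lemma delivers~\eqref{eq:conclusionfibharari} and reproduces, in the present setting, the proof of \cite[Théorème~4.1~(ii)]{hwzceh}.
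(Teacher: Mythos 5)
Your outline reproduces the general template of the fibration method, but the step you yourself identify as ``the heart of the argument'' --- orthogonality of the new adelic point to all of $\Br_\nr(Z_q)$, not just to the image of $\Br_\nr(Z)$ --- is not actually carried out, and the mechanism you propose for it would fail. A ``second application of Harari's formal lemma on $Z_q$'' cannot produce orthogonality to classes you have not already controlled: the formal lemma only upgrades orthogonality to $B\cap\Br_\nr$ into orthogonality to a \emph{given finite} subgroup $B$ of the Brauer group of an open subset, at the cost of moving the point away from $S$; it says nothing about the remaining, fibre-specific classes. Likewise, the assertion that rational connectedness makes $\Br_\nr(Z_q)$ modulo constants ``uniformly controlled as $q$ varies within a small $v$-adic neighbourhood'' is not a correct statement: the control one needs is not $v$-adic but arithmetic, namely a specialisation theorem saying that for $q$ in a suitable \emph{Hilbert subset} the specialisation of a fixed finite subgroup of $\Br(Z_\eta)$ (the Brauer group of the generic fibre, whose relevant quotient is finite thanks to hypothesis~(1)) generates $\Br_\nr(Z_q)$ modulo constant classes. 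In the correct architecture (Harari's, and that of \cite[Théorème~4.1]{hwzceh} which you cite), this finite set of generally \emph{ramified} classes must be fixed first, the formal lemma applied on the total space $Z$ to them, and only then is $q$ chosen, simultaneously in the Hilbert subset governing Brauer specialisation, in the given Hilbert subset $H$, and $v$-adically close to $\pi(P_v)$ for $v\in S$; your sketch inverts this order and omits the specialisation input altogether. (Your opening use of the formal lemma is also muddled: it is needed precisely for classes \emph{not} in $\Br_\nr(Z)$, i.e.\ classes of $\Br(Z_U)$ ramified along vertical divisors, not ``a finite subgroup of $\Br_\nr(Z)$''.) In addition, all the literature you would lean on for these specialisation statements treats one-dimensional (or projective) bases, so asserting the template directly over $\A^n_k$ leaves a further unaddressed reduction.

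For comparison, the paper sidesteps re-proving any of this: for $n=1$ it compactifies $\pi$ to a proper morphism $Z'\to\P^1_k$ (Nagata plus Hironaka) and quotes the fibration results of \cite{hww}, where the control of $\Br_\nr$ of the fibres under the rational connectedness hypothesis, including its transcendental part, is already established; for $n\geq 2$ it induces on $n$ via the first projection $p:\A^n_k\to\A^1_k$, using the Graber--Harris--Starr theorem to see that the generic fibre of $p\circ\pi$ is rationally connected, an elementary shrinking argument to preserve splitness of codimension-one fibres of the restricted maps $\pi_h$, and \cite[Lemma~8.12]{hwfibration} to ensure that $H\cap p^{-1}(h)$ still contains a Hilbert subset for $h$ in a Hilbert subset of $\A^1_k$. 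If you want to salvage a direct argument over $\A^n_k$, you would need to supply the Brauer specialisation theorem (with its Hilbert-subset hypothesis) and the correct ordering of the formal lemma, point selection, and fibre choice; as written, the proposal has a genuine gap at exactly the point where the theorem's content lies.
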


In turn, Theorem~\ref{th:hararifibration}
is essentially contained in the work of Harari \cite{harariduke, hararifleches},
though its  statement appears not to have been written down.
We provide a short proof based on the available literature.
We shall use Theorem~\ref{th:hararifibration}
only when $H=U$.
Assuming that $H=U$, however, would not lead to any significant simplification in the proof.

\begin{proof}[Proof of Theorem~\ref{th:hararifibration}]
Assume, first, that $n=1$.
By the theorems of Nagata and Hironaka, the morphism~$\pi$ extends to
a proper morphism $\pi':Z' \to \P^1_k$ for some smooth variety~$Z'$ that
contains~$Z$ as a dense open subset.  Applying \cite[Corollary~4.7,
Remarks~4.8~(i)--(ii), Corollary~6.2~(i)]{hww} to~$\pi'$
now yields~\eqref{eq:conclusionfibharari}.
For $n\geq 2$, we argue by induction.
Fix~$U$ and~$H$ as in the statement of the theorem,
fix a collection of local points $z_\Omega \in Z(k_{\Omega})^{\Br_\nr(Z)}$
and fix a neighbourhood~$\sU$
of~$z_\Omega$ in~$Z(k_\Omega)$.  We need to show the existence of $q \in U(k) \cap H$
such that $\sU \cap Z_q(k_\Omega)^{\Br_{\nr}(Z_q)}\neq\emptyset$.

Let $p:\A^n_k \to \A^1_k$ be the first projection.
For $h \in \A^1_k$, set $Z_h = (p \circ \pi)^{-1}(h)$
and let
 $\pi_h: Z_h \to p^{-1}(h)$ denote the restriction of~$\pi$.
Let $U_0 \subset \A^1_k$ be a dense open subset over which~$p\circ \pi$ is smooth,
small enough that for any $h \in U_0$, the generic fibre
of~$\pi_h$ is rationally connected
(see \cite[Chapter~IV, Theorem~3.5.3]{kollarbook}).
By assumption, there exists a closed subset of codimension~$\geq 2$ in $\A^n_k$ outside of which
the fibres of~$\pi$ are split.  After shrinking~$U_0$, we may assume that the images, by~$p$,
of the irreducible components of this closed subset are either dense in~$\A^1_k$ or disjoint from~$U_0$.
For $h \in U_0$,
the fibres of~$\pi_h$  above the codimension~$1$ points of~$p^{-1}(h)$ are then split.
After further shrinking~$U_0$, we may also assume that $p^{-1}(h) \cap U\neq \emptyset$ for every $h \in U_0$.

By \cite[Lemma~8.12]{hwfibration}, there exists a Hilbert subset $H_0 \subset \A^1_k$
such that for every $h \in U_0(k)\cap H_0$, the set $H \cap p^{-1}(h)$ contains a Hilbert
subset, say~$H_h$, of $p^{-1}(h)=\A^{n-1}_k$.
Let~$\eta$ denote the generic point of~$\A^1_k$.
The generic fibre of~$p\circ \pi$ is endowed with a map to~$\A^{n-1}$ with rationally
connected generic fibre, hence it is itself rationally connected (see \cite[Corollary~1.3]{ghs}).
The closed fibres of~$p \circ\pi$
are each endowed
with a map to~$\A^{n-1}$ whose generic fibre is, by assumption, split; hence they are themselves split.
By the case $n=1$ of Theorem~\ref{th:hararifibration} applied to~$p\circ \pi$,
we deduce the existence of $h \in U_0(k) \cap H_0$ such that
 $\sU \cap Z_h(k_\Omega)^{\Br_{\nr}(Z_h)}\neq\emptyset$.
By the induction hypothesis, we can then apply Theorem~\ref{th:hararifibration} to~$\pi_h$
and finally deduce the existence of $q \in U(k) \cap H_h$ such that
 $\sU \cap Z_q(k_\Omega)^{\Br_{\nr}(Z_q)}\neq\emptyset$.
As $H_h \subset H$, this completes the proof.
\end{proof}

\subsection{Cyclic descent}
\label{subsec:cyclicdescent}

We now establish Theorem~\ref{th:main} in the case
where $\bar G=\Aut(\bar Y/X_{\bar k})$ is a cyclic group.
Since most of the proof works in a slightly greater generality, we only assume,
for now, that~$\bar G$ is an abelian group (and drop the supersolvability assumption on~$\bar Y$).
We shall restrict to the cyclic case
only at the end of~\textsection\ref{subsec:cyclicdescent}.

As~$\bar G$ is abelian, the exact sequence~\eqref{eq:gautgal}
induces
a continuous action of $\Gal(\bar k/k)$
on~$\bar G$.
Let~$G$ be the finite \'etale group scheme
over~$k$ defined by $G(\bar k)=\bar G$.
In the next lemma, the symbol $\Ba(X)$ denotes the subgroup of $\Br_\nr(X)$
consisting of the locally constant classes (i.e.\ the classes
whose image in $\Br(X_{k_v})$ comes from $\Br(k_v)$ for all $v \in \Omega$).

\begin{lem}
\label{lem:existsonef}
If $X(k_{\Omega})^{\Ba(X)}\neq\emptyset$, then $\bar Y \to X_{\bar k}$
descends to a torsor $f:Y \to X$ under~$G$.
\end{lem}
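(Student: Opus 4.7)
Since $\bar G$ is abelian, the outer Galois action induced by~\eqref{eq:gautgal} is an honest action, so $\bar G$ is the group of $\bar k$\nobreakdash-points of a finite \'etale $k$\nobreakdash-group scheme $G$. Applying the Hochschild--Serre spectral sequence to $X \to \Spec k$ with coefficients in $G$, the class $\alpha := d_2([\bar Y]) \in H^2(k, G)$ produced by the $d_2$\nobreakdash-differential from $H^0(k, H^1_\et(X_{\bar k}, G))$ is the obstruction to lifting $[\bar Y]$ to $H^1_\et(X, G)$; equivalently, $\alpha$ vanishes exactly when $\bar Y \to X_{\bar k}$ descends to a $G$\nobreakdash-torsor on $X$. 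The plan is to deduce $\alpha = 0$ from the hypothesis $X(k_\Omega)^{\Ba(X)} \neq \emptyset$.

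First, since $X(k_v) \neq \emptyset$ for every $v$, any local point $x_v$ yields a retraction $x_v^*$ of the inflation $H^2(k_v, G) \to H^2_\et(X_{k_v}, G)$. The exactness of the local 5\nobreakdash-term sequence forces $\mathrm{inf}(\alpha|_{k_v}) = 0$ in $H^2_\et(X_{k_v}, G)$, and applying $x_v^*$ gives $\alpha|_{k_v}=0$. Hence $\alpha \in \Sha^2(k, G)$.

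To promote this to global vanishing, invoke Poitou--Tate duality for the finite abelian $k$\nobreakdash-group scheme $G$, with Cartier dual $G^D$: there is a perfect pairing $\Sha^2(k, G) \times \Sha^1(k, G^D) \to \mathbf{Q}/\mathbf{Z}$. It thus suffices to show $\langle \alpha, \beta \rangle = 0$ for every $\beta \in \Sha^1(k, G^D)$. Given such $\beta$, the plan is to produce a locally constant Brauer class $A_\beta \in \Ba(X)$ such that the Brauer--Manin pairing $\langle A_\beta, (x_v) \rangle$ equals the Poitou--Tate pairing $\langle \alpha, \beta \rangle$. The class $A_\beta$ is built via the cup product of $[\bar Y] \in E_2^{0,1}$ (coefficients $G$) and $\beta \in E_2^{1,0}$ (coefficients $G^D$) through the pairing $G \otimes G^D \to \Gm$, landing in $E_2^{1,1} = H^1(k, \Pic(X_{\bar k}))$ of the Hochschild--Serre spectral sequence with coefficients $\Gm$. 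One checks that $A_\beta$ is a permanent cycle and so lifts to a class in $\Br_1(X)/\Br(k)$; the local triviality of $\beta$ then forces $A_\beta$ to become constant over every $k_v$, placing it in $\Ba(X)/\Br(k)$. Orthogonality of $(x_v)$ to $\Ba(X)$ gives $\langle \alpha, \beta \rangle = 0$ for all $\beta$, and non-degeneracy of Poitou--Tate yields $\alpha = 0$.

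The main obstacle is the bookkeeping in the third step: verifying that the cup-product class $A_\beta$ survives all higher differentials, lifts to a locally constant Brauer class, and has Brauer--Manin pairing against $(x_v)$ equal to the Poitou--Tate pairing of $\alpha$ with $\beta$. These compatibilities are standard but delicate spectral-sequence manipulations, classical in the theory of descent under tori of Colliot-Th\'el\`ene--Sansuc, and transfer to the finite abelian setting with essentially unchanged arguments.
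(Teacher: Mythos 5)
Your strategy is sound and is in fact the argument that lies underneath the paper's proof: the paper disposes of the lemma by citing Colliot-Th\'el\`ene--Sansuc (Propositions 2.2.5 and 3.3.2 of the descent paper, together with a remark of Borovoi--Colliot-Th\'el\`ene--Skorobogatov) to get injectivity of $H^2(k,G)\to H^2_\et(X,G)$ under the hypothesis, and then concludes by the five-term exact sequence; what you write is essentially a re-derivation of those cited results. Your steps 1 and 2 are correct (the obstruction is $d_2([\bar Y])$, and local points give retractions of inflation, so the obstruction lies in $\Sha^2(k,G)$), and the compatibility between the Brauer--Manin pairing of the cup-product class and the Poitou--Tate pairing, which you defer as ``standard but delicate'', is precisely the content of the propositions the paper quotes, so deferring it is defensible.

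There is, however, one concrete gap. You claim that local constancy of $A_\beta$ ``places it in $\Ba(X)/\Br(k)$''. In this paper $\Ba(X)$ is by definition a subgroup of $\Br_{\nr}(X)$, and $X$ is not proper here (this is essential to the whole article), so a locally constant class of $\Br_1(X)$ is not tautologically unramified; since your hypothesis only gives orthogonality of $(x_v)$ to \emph{unramified} locally constant classes, the final orthogonality step does not apply to $A_\beta$ as written. Closing this requires an extra input: either Harari's formal lemma (a class of $\Br(X)$ whose evaluation vanishes identically on $X(k_v)$ for almost all $v$ --- which holds for $A_\beta$ by an integral-model and Lang--Weil argument --- lies in $\Br_{\nr}(X)+\mathrm{im}\,\Br(k)$, and constants pair to zero by reciprocity), or the comparison result the paper invokes, which is exactly where the rational connectedness hypothesis (finite generation of $\Pic(X_{\bar k})$) enters; note that your proof never uses rational connectedness at all, which should have been a warning sign. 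Relatedly, for open $X$ the spectral-sequence bookkeeping is not ``essentially unchanged'' from the proper torus case: $E_2^{3,0}=H^3(k,\bar k[X]^*)$ rather than $H^3(k,\bar k^*)$ (your permanent-cycle claim survives, via the Leibniz rule $d_2A_\beta=\alpha\cup\beta$ and $H^3(k,\bar k^*)=0$), and the ambiguity in lifting from $E_\infty^{1,1}$ to $\Br_1(X)$ is $F^2H^2$, which can be strictly larger than $\mathrm{im}\,\Br(k)$ when $\bar k[X]^*\neq\bar k^*$; these points must be controlled when verifying the pairing identity in your third step.
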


\begin{proof}
Under this assumption, Colliot-Thélène and Sansuc have shown
that the natural map
 $H^2(k,G) \to H^2_\et(X,G)$ is injective
(combine
 \cite[Proposition~2.2.5]{ctsandescent2} for~$X$
with
\cite[Theorem~3.3.1]{wittalb} for a smooth compactification of~$X$;
we note that
in the case of a smooth, proper, rationally connected variety,
the quoted theorem from~\cite{wittalb}
goes back
to~\cite{ctsandescent2}, see \cite[\textsection2.3, Remark]{boctsko}).
By the Hochschild--Serre spectral sequence,
it follows that
the natural map $H^1_\et(X,G) \to H^0(k,H^1_\et(X_{\bar k},G_{\bar k}))$ is surjective.
\end{proof}

\begin{lem}
\label{lem:resolutionquasitrivial}
Over any field, any algebraic group of multiplicative type~$R$ fits into 
a short exact sequence
$1\to R \to T \to Q \to 1$
where~$T$ is a torus and~$Q$ is a quasi-trivial torus.
\end{lem}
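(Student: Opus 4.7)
The plan is to apply Cartier duality to reformulate the statement in the language of finitely generated continuous $\Gamma$-modules (where $\Gamma = \Gal(\bar k/k)$), and then to construct the desired sequence as a fibre product, after a preliminary embedding of~$R$ into a quasi-trivial torus and a careful choice of quasi-trivial cover of the resulting quotient. Let $\hat R$, $\hat S$, $\hat T$, $\hat Q$, $\hat C$ denote the character modules of the groups of multiplicative type under consideration.

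First, I would produce an embedding $R \hookrightarrow T_0$ into a quasi-trivial torus. The action of~$\Gamma$ on $\hat R$ factors through some finite quotient~$G$; choosing $\Z$-module generators $r_1, \dots, r_n$ of $\hat R$ with respective stabilisers $H_1, \dots, H_n \leq G$, one obtains a $G$-equivariant surjection $\bigoplus_i \Z[G/H_i] \twoheadrightarrow \hat R$ from a permutation module. Cartier duality turns this into the desired embedding, and the quotient $S := T_0/R$ is then a torus because its character module is a submodule of a permutation module, hence torsion-free.

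The crucial and most delicate step is to produce a surjection $Q \twoheadrightarrow S$ from a quasi-trivial torus $Q$ whose kernel $C$ is \emph{itself} a torus; dually, this amounts to embedding the lattice $\hat S$ into a permutation module with torsion-free cokernel, which a naive embedding need not satisfy. The idea is to apply the previous step to the $\Z$-dual lattice $\hat S^* := \Hom_\Z(\hat S, \Z)$, obtaining a surjection $P \twoheadrightarrow \hat S^*$ from a permutation module~$P$ with kernel~$N$ a lattice. Dualising this sequence (which remains exact since $\hat S^*$ is torsion-free) and using the self-duality of the $G$-modules $\Z[G/H]$ for finite~$G$, one arrives at a short exact sequence $0 \to \hat S \to P^* \to N^* \to 0$ with $P^*$ permutation and $N^*$ a lattice, as required.

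Finally, I would form the fibre product $T := T_0 \times_S Q$. The two projections fit~$T$ into compatible short exact sequences $1 \to C \to T \to T_0 \to 1$ and $1 \to R \to T \to Q \to 1$: the first exhibits~$T$ as an extension of a torus by a torus, hence itself a torus, since a module extension of torsion-free modules by torsion-free modules is torsion-free; the second is the sequence claimed by the lemma.
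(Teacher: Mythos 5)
Your proof is correct and is essentially the paper's own argument in dual form: the key step (embedding the intermediate lattice into a permutation module with torsion-free cokernel by dualising a permutation resolution of its $\Z$-dual) is exactly the paper's, and your fibre product $T_0\times_S Q$ of tori corresponds under Cartier duality to the paper's amalgamated sum of character modules. So this is the same approach, merely phrased on the torus side rather than the character-module side.
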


\begin{proof}
The character group~$M$ of~$R$ fits into a short exact sequence of Galois modules
$0 \to K \to L \to M \to 0$ with~$K$ and~$L$ torsion-free and finitely generated as
abelian groups; one can even choose~$L$ to be a permutation Galois module.
Doing the same with $\Hom(K,\Z)$ and then dualising,
one finds that~$K$ also fits into an exact sequence of Galois modules
$0 \to K \to P \to C \to 0$ with~$C$ torsion-free and~$P$ permutation.
Let $S=L \oplus_K P$ be the amalgamated sum relative to~$K$.
The exact sequence
$0 \to L \to S \to C \to 0$ shows that~$S$ is torsion-free.
Dualising the short exact sequence $0 \to P \to S \to M \to 0$ therefore provides
the desired resolution.
\end{proof}

Let us fix a torsor
$f:Y\to X$ as in
Lemma~\ref{lem:existsonef}
and a resolution
\begin{align}
\label{eq:resolutionG}
1 \to G \to T \to Q \to 1
\end{align}
given by Lemma~\ref{lem:resolutionquasitrivial} applied to $R=G$.
Let $Z = Y \times_k^G T$ be the contracted product of~$Y$ and~$T$ under~$G$
(i.e.\ the quotient of $Y \times_k T$
 by the action of~$G$
given by $g\cdot (y,t)=(gy,g^{-1}t)$).
Let $g:Z\to X$ and $\pi:Z\to Q$ be the morphisms induced by the two projections.

Let us also fix a collection of local points
$x_\Omega \in X(k_\Omega)^{\Br_{\nr}(X)}$ and a neighbourhood~$\sU$
of~$x_\Omega$ in~$X(k_\Omega)$.  We shall show that
\begin{align}
\label{eq:ufy}
\sU \cap f^\sigma\Big(Y^\sigma(k_\Omega)^{\Br_{\nr}(Y^\sigma)}\Big) \neq\emptyset
\end{align}
for some $[\sigma] \in H^1(k,G)$.
By Remark~\ref{rmk:firstremark}~(iii), this will prove the desired equality~\eqref{eq:th:main}.

To this end, we  apply
 \cite[Corollaire~2.2]{hwzceh} to~$g$.
This yields a $[\tau] \in H^1(k,T)$  such that
$\sU\cap g^\tau\Big(Z^\tau(k_\Omega)^{\Br_{\nr}(Z^\tau)}\Big) \neq\emptyset$.
As the torus~$Q$ is quasi-trivial, Hilbert's Theorem~90 implies that $H^1(k,Q)=0$;
the cohomology class~$[\tau]$ can therefore be lifted to some $[\sigma_0] \in H^1(k,G)$
and we may assume that the cocycle~$\sigma_0$ lifts~$\tau$.
After replacing~$Y$
with~$Y^{\sigma_0}$,
which
has the effect of replacing $Z$ with~$Z^\tau$,
we may then assume that $[\tau]$ is the trivial class,
i.e.\ that
$\sU\cap g\Big(Z(k_\Omega)^{\Br_{\nr}(Z)}\Big) \neq\emptyset$.

To conclude the proof that~\eqref{eq:ufy} holds for some
 $[\sigma] \in H^1(k,G)$,
we shall now
 exploit the structure of a fibration over a quasi-trivial
torus given by the morphism $\pi:Z\to Q$.
For any field extension~$k'/k$ and any $q \in Q(k')$,
we let $Z_q = \pi^{-1}(q)$, which we view as a torsor under~$G$, over~$X_{k'}$, via~$g$.
The inverse image~$T_q$ of~$q$ by the projection $T\to Q$ is a torsor under~$G$, over~$k'$,
whose cohomology class~$[\sigma]$ in $H^1(k',G)$ is the image of~$q$ by the boundary
map of the exact sequence~\eqref{eq:resolutionG}.  As $Z = Y \times_k^G T$, we have $Z_q = Y \times_k^G T_q$ and hence~$Z_q$ and~$Y^\sigma$ are isomorphic as torsors under~$G$, over~$X_{k'}$.
Taking for~$k'$ an algebraically closed field extension of~$k(Q)$ and
for~$q$ a geometric generic point of~$Q$, it follows, first, that
the geometric generic fibre of~$\pi$ is isomorphic to a variety
obtained from~$\bar Y$ by an extension of scalars.
By our assumption on~$\bar Y$,
we deduce that
the generic fibre of~$\pi$ is rationally connected.
Taking $k'=k$, it also follows that
\begin{align}
\label{eq:gzqfsys}
g\Big(Z_q(k_\Omega)^{\Br_{\nr}(Z_q)}\Big)=f^\sigma\Big(Y^\sigma(k_\Omega)^{\Br_{\nr}(Y^\sigma)}\Big)
\end{align}
for every $q \in Q(k)$,
where $[\sigma]\in H^1(k,G)$ is the image of~$q$ by the boundary map of~\eqref{eq:resolutionG}.
In view of~\eqref{eq:gzqfsys}, we will be done if we show the equality
\begin{align}
Z(k_\Omega)^{\Br_\nr(Z)}=\overline{\bigcup_{q \in Q(k)} Z_q(k_\Omega)^{\Br_{\nr}(Z_q)}}
\end{align}
of subsets of~$Z(k_\Omega)$.  Thus, the problem that we need to solve has
been reduced to the question of making the fibration method work for
$\pi:Z\to Q$, a fibration over a quasi-trivial torus whose generic fibre is rationally
connected and all of whose fibres are split (even geometrically integral).
When~$\bar G$ is cyclic, 
a positive answer is given
by Theorem~\ref{th:fibration},
thanks to
the next lemma.

\begin{lem}
\label{lem:usescyclic}
If $\bar G$ is a cyclic group, the morphism
 $\pi_{\bar k}:Z_{\bar k}\to Q_{\bar k}$
admits a rational section.
\end{lem}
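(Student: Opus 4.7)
My plan is to use the cyclic hypothesis to reduce the question to a concrete Kummer-theoretic lifting problem, which I then solve using the rational connectedness of~$\bar Y$.

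First, working entirely over~$\bar k$, I identify $\bar G$ with~$\mu_n$. Since $\bar G$ is cyclic, its character group is cyclic, and the surjection $X^*(T_{\bar k}) \twoheadrightarrow X^*(\bar G) = \Z/n\Z$ admits a preimage $\chi \in X^*(T_{\bar k})$ of a generator, i.e., a character $\chi\colon T_{\bar k} \to \Gm$ whose restriction to~$\bar G$ is the standard inclusion $\mu_n \hookrightarrow \Gm$. Its $n$\nobreakdash-th power $\chi^n$ is then trivial on~$\bar G$, and hence descends to a character $\chi'\colon Q_{\bar k} \to \Gm$. The pair $(\chi,\pi)$ identifies $T_{\bar k}$ with the Kummer cover $\{w^n = \chi'(q)\} \subseteq \Gm \times Q_{\bar k}$ of~$Q_{\bar k}$.

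Second, I exploit a Kummer description of the $\mu_n$-torsor $\bar Y \to X_{\bar k}$: generically, $\bar Y$ has the form $\{z^n = f\}$ for some rational function~$f$ on~$X_{\bar k}$, produced from a rational trivialisation of the $n$-torsion line bundle on~$X_{\bar k}$ associated with~$\bar Y$. Taking the $\mu_n$-invariant $s = zw$ inside the contracted product, one computes a birational identification
\begin{align*}
Z_{\bar k} \simeq \{s^n = f(x)\,\chi'(q)\} \subseteq X_{\bar k} \times Q_{\bar k} \times \A^1,
\end{align*}
under which $\pi_{\bar k}$ corresponds to projection onto~$Q_{\bar k}$. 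A rational section of~$\pi_{\bar k}$ is therefore equivalent to a rational map $\xi\colon Q_{\bar k} \dashrightarrow X_{\bar k}$ together with $s \in \bar k(Q_{\bar k})^{*}$ satisfying $s^n = f(\xi(q))\,\chi'(q)$; equivalently, to a $\xi$ such that the pullback Kummer class $\xi^*[\bar Y]$ equals $[\chi'(\eta)]^{-1}$ in $H^1(\bar k(Q_{\bar k}),\mu_n) = \bar k(Q_{\bar k})^{*}/\bar k(Q_{\bar k})^{*n}$.

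The hard part will be producing such a~$\xi$, and it is here that the rational connectedness of~$\bar Y$ enters critically. The geometric generic fibre of~$\pi_{\bar k}$ is isomorphic to~$\bar Y$ (after trivialising the $\bar G$-torsor $T_{\bar k,\bar\eta}$ over $\overline{\bar k(Q_{\bar k})}$), hence is rationally connected, and $Q_{\bar k}$ is itself a rational variety. My approach is to appeal to Graber--Harris--Starr and its higher-dimensional variants: restrict $\pi_{\bar k}$ to a sufficiently general rational curve in~$Q_{\bar k}$, apply GHS to the resulting family of rationally connected varieties over~$\P^1$ to produce a section, and then deform the curve in a pencil to propagate that section to a rational section of~$\pi_{\bar k}$ over all of~$Q_{\bar k}$.
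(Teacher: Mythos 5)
Your first three paragraphs are essentially a hands-on Kummer-theoretic version of the paper's argument (your identification of $T_{\bar k}$ with $\{w^n=\chi'(q)\}$ is exactly the cartesian square the paper uses), but the final step is a genuine gap. Graber--Harris--Starr produces sections only over a \emph{curve}; there is no higher-dimensional variant asserting that a fibration with rationally connected fibres over a rational base of dimension $\geq 2$ has a rational section, and the statement is in fact false: a conic bundle over $\P^2_{\bar k}$ associated with a nontrivial class in $\Br(\bar k(\P^2))$ (which is highly nontrivial, since Tsen's theorem fails in dimension $2$) has rationally connected fibres and rational base but no rational section. Your proposed mechanism --- apply GHS on a general rational curve in $Q_{\bar k}$ and then ``deform the curve in a pencil to propagate that section'' --- is precisely the argument that would prove this false statement: the sections obtained over the various members of the pencil need not vary algebraically or glue, so nothing propagates. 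Thus the key point of the lemma, producing the map $\xi$ (equivalently the section), is not established.

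What is missing is a second use of cyclicity, to reduce the \emph{base} to a curve before invoking GHS --- and it is already latent in your own display: in $s^n=f(x)\,\chi'(q)$ the dependence on $q$ is only through $\chi'(q)\in\Gm$, so $\pi_{\bar k}$ is (birationally) the pullback, along the dominant map $\chi'\colon Q_{\bar k}\to\Gm$, of the fibration $\{s^n=f(x)\,u\}\to\Gm$, i.e.\ of $\pi_0\colon Y\times_k^G\Gmbark\to\Gmbark$. This is how the paper argues: the right-hand square of the diagram comparing $1\to G\to T\to Q\to 1$ with the Kummer sequence is cartesian, hence so is its image under $Y\times_k^G(-)$, so $\pi_{\bar k}$ is a dominant base change of $\pi_0$. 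One then applies GHS over the curve $\Gmbark$ to proper models of the generic fibre of $\pi_0$ (whose geometric generic fibre agrees with that of $\pi_{\bar k}$, hence is rationally connected), together with \cite[Chapter~IV, Theorem~6.10]{kollarbook} to handle the passage between the open total space and its proper model, and pulls the resulting rational section back along $\chi'$. With this reduction inserted in place of your pencil argument, your computation does yield the lemma.
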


\begin{proof}
If~$\bar G$ is cyclic, one can fit
the exact sequence~\eqref{eq:resolutionG}
over~$\bar k$ and
the Kummer exact sequence
into a commutative diagram as pictured below:
\begin{align}
\begin{aligned}
\label{eq:cycliccd}
\xymatrix@R=3ex{
1 \ar[r] & G_{\bar k} \ar[r]\ar[d]^\wr & T_{\bar k}  \ar[d]\ar[r] & Q_{\bar k} \ar[d]\ar[r] & 1 \\
1 \ar[r] & \mmu_{n,\bar k} \ar[r] & \Gmbark \ar[r]^{\times n} & \Gmbark \ar[r] & 1\rlap{.}
}
\end{aligned}
\end{align}
As the right-hand side square of~\eqref{eq:cycliccd} is cartesian,
so is the square obtained by applying the functor $Y \times_k^G -$ to it.
Hence $\pi_{\bar k}:Z_{\bar k}\to Q_{\bar k}$ comes, by a dominant base change,
from the morphism $\pi_0:Y \times_k^G \Gmbark \to \Gmbark$ induced by
the multiplication by~$n$ map $\Gmbark \to \Gmbark$.
In particular, it suffices to check that~$\pi_0$ admits a rational section.
Now, the proper models of the geometric generic fibre of~$\pi_0$ are rationally connected,
since~$\pi_{\bar k}$ and~$\pi_0$ have the same geometric generic fibre.
As the target of~$\pi_0$ is a curve over an algebraically closed field
of characteristic~$0$,
the Graber--Harris--Starr theorem \cite[Theorem~1.1]{ghs},
combined with \cite[Chapter~IV, Theorem~6.10]{kollarbook},
does imply the existence of a rational section of~$\pi_0$.
\end{proof}


\subsection{Proof of Theorem~\ref{th:main} in the general case}
\label{subsec:general-case}

We assume that~$\bar Y$ is a supersolvable finite descent type on~$X$ of class~$n$
and argue by induction on~$n$.  If $n=0$, there is nothing to prove.
Assume that $n>0$ and that the statement of Theorem~\ref{th:main} holds for supersolvable finite
descent types of class $n-1$.

Let $\bar G = \Aut(\bar Y/X_{\bar k})$ and let
 $\{1\}=\bar G_0\subseteq \bar G_1 \subseteq \dots \subseteq \bar G_n = \bar G$
be a filtration satisfying the requirements of Definition~\ref{def:supersolvable}.
The subgroup $\bar G_{n-1}$ of $\Aut(\bar Y/X)$ is normal
since it is stabilised
by the outer action of $\Gal(\bar k/k)$ on~$\bar G$
induced by~\eqref{eq:gautgal}.
Thus $\bar Y'=\bar Y/\bar G_{n-1}$ is 
a finite descent type on~$X$.
As $\Aut(\bar Y'/X_{\bar k})=\bar G_n/\bar G_{n-1}$ is
cyclic, we can apply the case of Theorem~\ref{th:main} already established
in~\textsection\ref{subsec:cyclicdescent}, and deduce that
\begin{align}
\label{eq:xkfp}
X(k_{\Omega})^{\Br_{\nr}(X)}= \overline{\bigcup_{f': Y' \to X} f\Big(Y'(k_{\Omega})^{\Br_{\nr}(Y')}\Big)}\rlap,
\end{align}
where $f':Y'\to X$ ranges over
the isomorphism classes of torsors $Y' \to X$ of type~$\bar Y'$.

\begin{lem}
\label{lem:ypsupersolvable}
Let
 $f':Y' \to X$ be a torsor  of type~$\bar Y'$
and
 $\iota:Y'_{\bar k}\isoto \bar Y'$
be an isomorphism
of $X_{\bar k}$\nobreakdash-schemes.
Viewing the scheme~$\bar Y$ as
a $Y'_{\bar k}$\nobreakdash-scheme via~$\iota$,
it is a
supersolvable finite descent type on~$Y'$ of class $n-1$.
\end{lem}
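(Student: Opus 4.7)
The plan is a direct verification that $\bar Y$, regarded through $\iota$ as a $Y'_{\bar k}$-scheme, satisfies the three conditions in the definition of a supersolvable finite descent type of class~$n-1$ on~$Y'$. The argument is essentially a diagram chase; the one point demanding some care is the compatibility between the outer $\Gal(\bar k/k)$-actions on $\bar G$ and on the subgroup~$\bar G_{n-1}$.

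First I would check that $\bar Y$ is a finite descent type on~$Y'$. Note that $Y'$ is smooth (being \'etale over~$X$ via the torsor structure) and geometrically irreducible (since $Y'_{\bar k}\simeq\bar Y'$ is irreducible). Through~$\iota$, the structure map $\bar Y \to Y'_{\bar k}$ becomes the quotient map $\bar Y \to \bar Y/\bar G_{n-1}$, which is finite \'etale with irreducible source. Geometric integrality of~$Y'$ gives $k(Y') \subseteq \bar k(Y'_{\bar k}) \subseteq \bar k(\bar Y)$, so that $\bar k(\bar Y)/k(Y')$ is an intermediate extension of the Galois extension $\bar k(\bar Y)/k(X)$, and is therefore itself Galois. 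I would then observe that $\bar G_{n-1}$ is normal in $\Aut(\bar Y/X)$ (being normal in~$\bar G$ and stable under the outer $\Gal(\bar k/k)$-action by hypothesis) and identify the analogue of~\eqref{eq:gautgal} for $\bar Y$ on~$Y'$ as
\begin{align*}
1 \to \bar G_{n-1} \to \Aut(\bar Y/Y') \to \Gal(\bar k/k) \to 1,
\end{align*}
by checking that $\Aut(\bar Y/Y') \cap \bar G = \bar G_{n-1}$ and that $\Aut(\bar Y/Y')$ surjects onto $\Gal(\bar k/k)$ (the latter being a reformulation of $\bar k(\bar Y)/k(Y')$ being Galois, via geometric integrality of~$Y'$).

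Finally, for the supersolvable structure, I would truncate the original filtration to $\{1\} = \bar G_0 \subseteq \bar G_1 \subseteq \dots \subseteq \bar G_{n-1}$; it has length~$n-1$ with cyclic successive quotients by hypothesis, and each $\bar G_i$ is normal in $\bar G$, hence in its subgroup~$\bar G_{n-1}$. The main step, where I expect to spend a little effort, is verifying stability of each $\bar G_i$ under the outer action of $\Gal(\bar k/k)$ on $\bar G_{n-1}$. The key observation is that any lift of an element of $\Gal(\bar k/k)$ to $\Aut(\bar Y/Y')$ is automatically a lift to $\Aut(\bar Y/X)$, so that the outer action on $\bar G_{n-1}$ arising from the new short exact sequence is precisely the restriction to $\bar G_{n-1}$ of the outer action of $\Gal(\bar k/k)$ on~$\bar G$. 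Stability of each $\bar G_i$ then follows from the hypothesis on the original filtration of~$\bar G$.
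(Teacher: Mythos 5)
Your proposal is correct and follows essentially the same route as the paper: it observes that $\bar k(\bar Y)/k(Y')$ is Galois because it sits inside the Galois extension $\bar k(\bar Y)/k(X)$, and then uses the inclusion $\Aut(\bar Y/Y')\subseteq\Aut(\bar Y/X)$ (the paper's commutative diagram of short exact sequences) to see that lifts of $\sigma\in\Gal(\bar k/k)$ for the new sequence are lifts for the old one, so that stability of $\bar G_1,\dots,\bar G_{n-2}$ transfers to the truncated filtration. Your phrase ``the restriction to $\bar G_{n-1}$ of the outer action on $\bar G$'' is slightly loose (an outer automorphism of $\bar G$ does not canonically restrict to one of $\bar G_{n-1}$), but the operative argument you give via lifts is exactly the paper's and is correct.
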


\begin{proof}
As~$\bar Y$ is Galois over~$X$, it is Galois over~$Y'$.
Moreover, the commutative diagram
\begin{align*}
\xymatrix@R=1.7ex{
1 \ar[r] & \vphantom{\Aut(\bar Y/X)}\bar G \ar[r] & \vphantom{\bar G}\Aut(\bar Y/X) \ar[r] & \Gal(\bar k/k) \ar[r] & 1 \\
1 \ar[r] & \vphantom{\Aut(\bar Y/Y')}\bar G_{n-1} \ar[r]
\ar@{}[u]|-*{\cup}
 & \Aut(\bar Y/Y')\vphantom{\bar G_{n-1}}
\ar@{}[u]|-*{\cup}
\ar[r] & \Gal(\bar k/k) \ar@{=}[u] \ar[r] & 1
}
\end{align*}
shows that the outer action of $\Gal(\bar k/k)$ on~$\bar G_{n-1}$ coming from the bottom row
stabilises the subgroups $\bar G_1,\dots,\bar G_{n-2}$ of~$\bar G_{n-1}$,
since these subgroups
are stable under the outer action of $\Gal(\bar k/k)$
on~$\bar G$ coming from the top row.
\end{proof}

For any torsor $f':Y'\to X$ of type~$\bar Y'$ and for any  $X_{\bar k}$\nobreakdash-isomorphism
  $\iota:Y'_{\bar k}\isoto \bar Y'$,
Lemma~\ref{lem:ypsupersolvable} and the induction hypothesis imply the equality
\begin{align}
\label{eq:ypfs}
Y'(k_{\Omega})^{\Br_{\nr}(Y')}= \overline{\bigcup_{f'': Y \to Y'} f''\Big(Y(k_{\Omega})^{\Br_{\nr}(Y)}\Big)}
\end{align}
of subsets of~$Y'(k_\Omega)$,
where $f'':Y\to Y'$ ranges over
the isomorphism classes of torsors $Y \to Y'$ of type~$\bar Y$ (viewing~$\bar Y$ as an
$Y'_{\bar k}$\nobreakdash-scheme via~$\iota$).  Now for any such~$f'$, $\iota$ and~$f''$,
the composition $f'\circ f'':Y\to X$ is a torsor of type~$\bar Y$.  Hence combining~\eqref{eq:xkfp}
with~\eqref{eq:ypfs}
yields~\eqref{eq:th:main}.
This completes the proof of Theorem~\ref{th:main}.

\section{Applications}
\label{sec:applications}

We now discuss applications of supersolvable
descent to rational points on homogeneous spaces and to Galois theory,
pursuing and expanding the investigations of~\cite{hwzceh}.
Unless otherwise noted, the field~$k$ will be assumed in~\textsection\ref{sec:applications}
to be a number field.

\subsection{Homogeneous spaces of linear algebraic groups}
\label{subsec:homogeneous-spaces}

In Theorem~\ref{th:genthb} below,
we apply supersolvable descent to
the validity of Conjecture~\ref{conj:ct} for homogeneous spaces of linear algebraic groups.
Additional notation and terminology
that is useful for dealing with
stabilisers of geometric points on such homogeneous spaces
will first be introduced in~\textsection\ref{subsubsec:sigmaalgebraic}.
Theorem~\ref{th:genthb} is stated in~\textsection\ref{subsubsec:statement}
and proved in~\textsection\ref{subsubsec:proofthgenb}.

\subsubsection{Outer Galois actions and $\sigma$-algebraic maps}
\label{subsubsec:sigmaalgebraic}

We first
introduce
 \emph{$\sigma$\nobreakdash-algebraic} maps,
following Borovoi \cite[\textsection1.1]{borovoiab}.

\begin{defn}
Given a field automorphism~$\sigma$ of~$\bar k$,
a \emph{$\sigma$\nobreakdash-algebraic map} between two varieties~$V$, $W$ over~$\bar k$
is a
morphism of schemes $f: V \to W$
that makes the square
\begin{align*}
\xymatrix@R=4ex{
V \ar[rr]^{f}\ar[d]^(.45){\varepsilon_V} && W \ar[d]^(.45){\varepsilon_W} \\
\Spec(\bar k) \ar[rr]^{\Spec\big(\sigma^{-1}\big)} && \Spec(\bar k)
}
\end{align*}
commute,
where~$\varepsilon_V$ and~$\varepsilon_W$ are the structure morphisms of~$V$ and~$W$.
Equivalently,
 if $\sigma^*W$ denotes the variety over~$\bar k$ with underlying scheme~$W$
and structure morphism $\Spec(\sigma)\circ \varepsilon_W$, a $\sigma$\nobreakdash-algebraic map $f:V\to W$ is a morphism of varieties $V \to \sigma^*W$.
\end{defn}

A $\sigma$\nobreakdash-algebraic map is generally not a morphism of varieties.
Nonetheless, any $\sigma$\nobreakdash-algebraic map $f:V\to W$ induces a
map  $f_*:V(\bar k) \to W(\bar k)$, since the sets~$V(\bar k)$ and~$W(\bar k)$
 can be identified with the sets of closed points of the schemes~$V$ and~$W$.
We shall say that a map $V(\bar k)\to W(\bar k)$ is \emph{$\sigma$\nobreakdash-algebraic}
if it coincides with~$f_*$ for a $\sigma$\nobreakdash-algebraic map $f:V\to W$,
and that it is \emph{algebraic} if it is $\sigma$\nobreakdash-algebraic with $\sigma=\Id_{\bar k}$.

\begin{rmks}
\label{rmks:sigmaalg}
(i) If~$V$ and~$W$ are non-empty varieties over~$\bar k$, a morphism of schemes $f:V \to W$ can be a $\sigma$\nobreakdash-algebraic map for at most one automorphism~$\sigma$ of~$\bar k$.

(ii)
As $\Spec\big(\tau^{-1}\big) \circ \Spec\big(\sigma^{-1}\big) = \Spec\big(\sigma^{-1}\tau^{-1}\big)
= \Spec\big((\tau\sigma)^{-1}\big)$,
precomposing a $\tau$\nobreakdash-algebraic map with
a $\sigma$\nobreakdash-algebraic map yields a $\tau\sigma$\nobreakdash-algebraic map.
In particular, the class of $\sigma$\nobreakdash-algebraic maps is closed under composition with
algebraic maps.

(iii)
If $V = V_0 \times_k \Spec(\bar k)$ for a variety~$V_0$ over~$k$,
then for any $\sigma \in \Gal(\bar k/k)$,
the morphism of schemes $f:V \to V$ given by
$\Id_{V_0} \times_k \Spec\big(\sigma^{-1}\big)$
is a $\sigma$\nobreakdash-algebraic map.
The map $f_*:V(\bar k)\to V(\bar k)$ that it induces is $v \mapsto \sigma(v)$.

(iv)
Let $V = V_0 \times_k \Spec(\bar k)$ for a variety~$V_0$ over~$k$
and~$U$ be an irreducible finite étale $V$\nobreakdash-scheme.
Let $\rho:\Aut(U/V_0) \to \Gal(\bar k/k)$ denote the natural map,
which factors through $\Aut(V/V_0)=\Gal(\bar k/k)$.
Then $a:U\to U$ is a $\rho(a)$\nobreakdash-algebraic map
for any $a \in \Aut(U/V_0)$.
\end{rmks}

We recall that if~$X$ is a (left) homogeneous space of a connected linear algebraic
group~$L$ over~$k$ and if
 $H_{\bar x} \subset L_{\bar k}$
denotes
the stabiliser
 of a point $\bar x \in X(\bar k)$,
viewed as an algebraic group over~$\bar k$, the exact sequence
\begin{align}
\label{eq:hegal}
1 \to H_{\bar x}(\bar k) \to G_{\bar x}  \to \Gal(\bar k/k) \to 1\rlap,
\end{align}
where $G_{\bar x} = \big\{ (\ell, \sigma) \in L(\bar k) \rtimes \Gal(\bar k/k)
\mkern2mu;\mkern1.5mu
\ell\sigma(\bar x)=\bar x\big\}$,
induces
a continuous
outer action of the profinite group $\Gal(\bar k/k)$
on the discrete group $H_{\bar x}(\bar k)$
(see \cite[\textsection2.3]{demarchelucchinireduction}).
Thus, the discrete group $H_{\bar x}(\bar k)$ receives a continuous outer  action of~$\Gal(\bar k/k)$
while the algebraic group~$H_{\bar x}$ is only defined over~$\bar k$.
The notion of $\sigma$\nobreakdash-algebraic map
allows one to
 reconcile this outer action with the algebraic structure of~$H_{\bar x}$, as shown by the following proposition.

\begin{prop}
\label{prop:outeractionissigmaalg}
Let $\sigma \in \Gal(\bar k/k)$.
Any group automorphism of~$H_{\bar x}(\bar k)$
that represents
 the outer action of $\sigma$
is induced by a $\sigma$\nobreakdash-algebraic map $H_{\bar x}\to H_{\bar x}$.
\end{prop}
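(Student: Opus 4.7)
The plan is, for each group automorphism of $H_{\bar x}(\bar k)$ representing the outer action of~$\sigma$, to exhibit a $\sigma$\nobreakdash-algebraic self-map of the $\bar k$\nobreakdash-scheme~$H_{\bar x}$ that induces it.

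First, I would identify the representatives of the outer action. The surjectivity of $G_{\bar x} \to \Gal(\bar k/k)$ in~\eqref{eq:hegal}, which follows from the fact that $L(\bar k)$ acts transitively on~$X(\bar k)$, guarantees that every $\sigma$ admits a lift in~$G_{\bar x}$, that is, an element $(\ell,\sigma)$ with $\ell\sigma(\bar x)=\bar x$. A direct computation in the semi-direct product $L(\bar k)\rtimes\Gal(\bar k/k)$ shows that conjugation by such a lift sends $h \in H_{\bar x}(\bar k)$ to $\ell\sigma(h)\ell^{-1}$. Any two lifts of~$\sigma$ differ by left multiplication by an element of~$H_{\bar x}(\bar k)$, so they induce automorphisms of $H_{\bar x}(\bar k)$ that differ by an inner automorphism. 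The representatives of the outer action of~$\sigma$ are therefore exactly the maps $h \mapsto \ell\sigma(h)\ell^{-1}$, as $(\ell,\sigma)$ varies over the lifts of~$\sigma$ in~$G_{\bar x}$.

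Next, I would produce a $\sigma$\nobreakdash-algebraic map on the ambient linear group. Since $L_{\bar k}$ arises by base change from the $k$\nobreakdash-group~$L$, Remark~\ref{rmks:sigmaalg}(iii) provides a $\sigma$\nobreakdash-algebraic map $L_{\bar k} \to L_{\bar k}$ whose effect on $\bar k$\nobreakdash-points is $\ell' \mapsto \sigma(\ell')$. Composing with the algebraic inner automorphism $\ell' \mapsto \ell\ell'\ell^{-1}$ of~$L_{\bar k}$ and invoking Remark~\ref{rmks:sigmaalg}(ii) yields a $\sigma$\nobreakdash-algebraic map $\alpha: L_{\bar k} \to L_{\bar k}$ whose effect on $\bar k$\nobreakdash-points is $\ell' \mapsto \ell\sigma(\ell')\ell^{-1}$.

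Finally, I would restrict~$\alpha$ to the closed subscheme $H_{\bar x}\subset L_{\bar k}$. The previous step shows that $\alpha$ sends $H_{\bar x}(\bar k)$ into~$H_{\bar x}(\bar k)$; since $H_{\bar x}$ is smooth (hence reduced) as a linear algebraic group in characteristic~$0$, and $\alpha$ is a morphism of schemes, this forces $\alpha$ to restrict to a morphism $H_{\bar x} \to H_{\bar x}$, which inherits $\sigma$\nobreakdash-algebraicity from~$\alpha$ and induces the chosen representative of the outer action on $\bar k$\nobreakdash-points. The only conceptual subtlety lies here: the naive $\sigma$\nobreakdash-algebraic self-map of~$L_{\bar k}$ does not preserve~$H_{\bar x}$, but rather sends it to $H_{\sigma(\bar x)}=\ell^{-1}H_{\bar x}\ell$, and the conjugation by~$\ell$ is precisely the algebraic correction needed to restore a self-map; the availability of~$\ell$ is guaranteed by the transitivity of $L(\bar k)$ on~$X(\bar k)$, which is built into the very definition of~$G_{\bar x}$.
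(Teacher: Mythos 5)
Your proposal is correct and follows essentially the same route as the paper: compose the $\sigma$\nobreakdash-algebraic map $m\mapsto\sigma(m)$ on $L_{\bar k}$ with the algebraic conjugation by~$\ell$ for a lift $(\ell,\sigma)\in G_{\bar x}$, then use reducedness of the closed subscheme $H_{\bar x}\subset L_{\bar k}$ to restrict to a $\sigma$\nobreakdash-algebraic self-map of~$H_{\bar x}$. Your preliminary identification of the representatives of the outer class (conjugations by the lifts of~$\sigma$ in~$G_{\bar x}$) is a useful explicit check that the paper leaves implicit, but it is the same argument.
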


\begin{proof}
Let $(\ell,\sigma) \in G_{\bar x}$.
The automorphism $m \mapsto \ell \sigma(m) \ell^{-1}$ of~$L(\bar k)$,
being the composition of the $\sigma$\nobreakdash-algebraic map $m \mapsto \sigma(m)$
with the algebraic map $m \mapsto \ell m \ell^{-1}$, is itself $\sigma$\nobreakdash-algebraic
(see
Remarks~\ref{rmks:sigmaalg}~(ii)--(iii)), i.e.\ it equals~$f_*$
for
a $\sigma$\nobreakdash-algebraic map $f:L_{\bar k} \to L_{\bar k}$.
As~$f_*$ stabilises $H_{\bar x}(\bar k)$
and as~$H_{\bar x}$ is a reduced closed subscheme of~$L_{\bar k}$,
the scheme morphism~$f$ stabilises~$H_{\bar x}$.
As the resulting scheme morphism $g:H_{\bar x} \to H_{\bar x}$
is a $\sigma$\nobreakdash-algebraic map
and as the automorphism $g_*$ coincides with conjugation by~$(\ell,\sigma)$,
the proposition is proved.
\end{proof}

\begin{cor}
\label{cor:hx0stable}
Let~$H_{\bar x}^0$ denote the connected component of the identity in~$H_{\bar x}$.
The outer action of~$\Gal(\bar k/k)$ on~$H_{\bar x}(\bar k)$ induced by~\eqref{eq:hegal}
stabilises $H_{\bar x}^0(\bar k)$ and hence
induces an outer action
of~$\Gal(\bar k/k)$ on the finite group $\pi_0(H_{\bar x})$.
\end{cor}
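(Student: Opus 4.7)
The plan is to reduce the corollary directly to Proposition~\ref{prop:outeractionissigmaalg}. Fix $\sigma\in\Gal(\bar k/k)$ and let $\phi:H_{\bar x}(\bar k)\isoto H_{\bar x}(\bar k)$ be any group automorphism representing the outer action of~$\sigma$. By Proposition~\ref{prop:outeractionissigmaalg}, there is a $\sigma$\nobreakdash-algebraic map $g:H_{\bar x}\to H_{\bar x}$ with $g_*=\phi$. To conclude that~$\phi$ stabilises $H_{\bar x}^0(\bar k)$, I would argue that~$g$, regarded simply as a morphism of schemes, maps the connected component $H_{\bar x}^0$ of the identity into itself.

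The key observation is that~$g$ is an isomorphism of \emph{schemes}: indeed, $g$ has a $\sigma^{-1}$\nobreakdash-algebraic inverse, namely the map induced by the inverse of the conjugation used to construct it in the proof of Proposition~\ref{prop:outeractionissigmaalg}. In particular, the underlying continuous map of topological spaces $g:|H_{\bar x}|\to|H_{\bar x}|$ is a homeomorphism, hence permutes the connected components. Since $\phi$ is a group homomorphism, it fixes the identity element, so $g$ fixes the (unique) $\bar k$\nobreakdash-point corresponding to the identity of $H_{\bar x}(\bar k)$. The connected component of~$H_{\bar x}$ containing this point is $H_{\bar x}^0$, and it must be mapped to itself by~$g$. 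Consequently $\phi=g_*$ carries $H_{\bar x}^0(\bar k)$ into itself, as desired.

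This argument applies to every lift of the outer action of every $\sigma\in\Gal(\bar k/k)$, so the outer action of $\Gal(\bar k/k)$ on $H_{\bar x}(\bar k)$ stabilises the normal subgroup $H_{\bar x}^0(\bar k)$ and descends to an outer action on the quotient $\pi_0(H_{\bar x})=H_{\bar x}(\bar k)/H_{\bar x}^0(\bar k)$. Continuity of the induced outer action on the finite group $\pi_0(H_{\bar x})$ is inherited from the continuity of the original outer action on $H_{\bar x}(\bar k)$ since passing to a quotient by a characteristic subgroup is functorial on $\Out$.

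The main (minor) obstacle is simply to observe cleanly that a $\sigma$\nobreakdash-algebraic automorphism, although not a morphism over~$\bar k$, is nonetheless an isomorphism at the level of schemes, so that the topological invariance of connected components is available; once this is noted, the rest is automatic from the fact that~$g$ fixes the identity point.
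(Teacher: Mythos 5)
Your proposal is correct and follows essentially the same route as the paper: invoke Proposition~\ref{prop:outeractionissigmaalg} to realise any lift of the outer action of~$\sigma$ as a $\sigma$\nobreakdash-algebraic map, and then note that since this map fixes the identity point it must stabilise the connected component~$H_{\bar x}^0$. The paper gets by with one sentence (mere continuity of the scheme morphism already forces the connected component of the identity into itself), so your extra step of showing the map is a scheme isomorphism is harmless but not needed.
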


\begin{proof}
This follows from Proposition~\ref{prop:outeractionissigmaalg},
as any scheme morphism $H_{\bar x} \to H_{\bar x}$ that preserves the identity point
must stabilise the open subscheme~$H_{\bar x}^0$.
\end{proof}

\subsubsection{Statement}
\label{subsubsec:statement}

We now formulate Theorem~\ref{th:genthb}, our main application of supersolvable descent to
homogeneous spaces of linear algebraic groups, and discuss its first consequences.

\begin{thm}
\label{th:genthb}
Let~$X$ be a homogeneous space of
 a connected linear algebraic group~$L$ over a number field~$k$.
Let $\bar x \in X(\bar k)$.
Let $H_{\bar x}$ denote the stabiliser of~$\bar x$ and $N \subset H_{\bar x}$
be a normal algebraic subgroup of finite index satisfying the following two assumptions:
\begin{enumerate}
\item
the outer action of $\Gal(\bar k/k)$ on~$H_{\bar x}(\bar k)$
induced by~\eqref{eq:hegal} stabilises~$N(\bar k)$;
\item the quotient $H_{\bar x}(\bar k)/N(\bar k)$
is supersolvable
in the sense of Definition~\ref{def:supersolvable},
with respect to the outer
action of $\Gal(\bar k/k)$ on $H_{\bar x}(\bar k)/N(\bar k)$ induced by~\eqref{eq:hegal}.
\end{enumerate}
Let~$Y$ range over the
homogeneous spaces of~$L$ over~$k$ that satisfy the following condition:
\vspace*{1.5pt}
\begin{flushright}
$(\star)\mkern9mu$\begin{minipage}[t]{.92\textwidth}
there exist
 an $L$\nobreakdash-equivariant map $Y \to X$
and a lifting $\bar y \in Y(\bar k)$ of~$\bar x$ whose stabiliser,
as an algebraic subgroup of~$L_{\bar k}$, is equal to~$N$.
\end{minipage}
\end{flushright}
\vspace*{3pt}
If
Conjecture~\ref{conj:ct} (resp.\ the implication $Y(k_\Omega)^{\Br_{\nr}(Y)}\neq\emptyset \Rightarrow Y(k)\neq\emptyset$)
holds for all such~$Y$,
then Conjecture~\ref{conj:ct} holds for~$X$
(resp.\ then
 $X(k_\Omega)^{\Br_{\nr}(X)}\neq\emptyset \Rightarrow X(k)\neq\emptyset$).
\end{thm}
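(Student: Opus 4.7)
The plan is to apply supersolvable descent (Theorem~\ref{th:main}) to the finite descent type $\bar Y_0 := L_{\bar k}/N$ on~$X$, equipped with the natural projection $\bar Y_0 \to X_{\bar k} = L_{\bar k}/H_{\bar x}$ induced by the inclusion $N \subseteq H_{\bar x}$. My first step is to verify that $\bar Y_0$ is a rationally connected supersolvable finite descent type on~$X$. Rational connectedness is immediate since $L_{\bar k}/N$ is a homogeneous space of the connected linear algebraic group~$L_{\bar k}$, hence unirational over~$\bar k$. For the rest, Hypothesis~(1), applied to any representative $(m,\sigma) \in G_{\bar x}$ of the outer Galois action, yields the equality $m\,\sigma(N)\,m^{-1} = N$ of closed subgroup schemes of~$L_{\bar k}$; this makes $N(\bar k)$ normal in~$G_{\bar x}$ and allows one to identify the sequence~\eqref{eq:gautgal} attached to~$\bar Y_0$ with the quotient of~\eqref{eq:hegal} by~$N(\bar k)$, namely
\begin{equation*}
1 \to H_{\bar x}(\bar k)/N(\bar k) \to G_{\bar x}/N(\bar k) \to \Gal(\bar k/k) \to 1,
\end{equation*}
under the map sending the class of $(m,\sigma)$ to the $\sigma$-algebraic $X$-automorphism $s_{(m,\sigma)}$ of~$\bar Y_0$ given on $\bar k$-points by $\ell N \mapsto \sigma(\ell)\,m^{-1}\,N$. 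In particular $\bar k(\bar Y_0)/k(X)$ is Galois, and the outer Galois action on $\Aut(\bar Y_0/X_{\bar k}) = H_{\bar x}(\bar k)/N(\bar k)$ coincides with the one appearing in Hypothesis~(2), which makes~$\bar Y_0$ supersolvable.

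Theorem~\ref{th:main} then yields
\begin{equation*}
X(k_\Omega)^{\Br_{\nr}(X)} \;=\; \overline{\bigcup_{f:Y\to X} f\bigl(Y(k_\Omega)^{\Br_{\nr}(Y)}\bigr)},
\end{equation*}
where $f$ ranges over torsors of type~$\bar Y_0$. The crux of the argument is to show that every such torsor $f:Y\to X$ is canonically a homogeneous space of~$L$ over~$k$ satisfying~$(\star)$. The key observation is that the explicit formula for $s_{(m,\sigma)}$ implies the intertwining relation $s_{(m,\sigma)}(\ell'\cdot y) = \sigma(\ell')\cdot s_{(m,\sigma)}(y)$ for all $\ell' \in L(\bar k)$ and $y \in \bar Y_0(\bar k)$, where $\ell' \cdot y$ denotes the left $L_{\bar k}$-action on $\bar Y_0 = L_{\bar k}/N$. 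Given a splitting $s:\Gal(\bar k/k) \to \Aut(\bar Y_0/X)$ corresponding to a torsor $Y\to X$ of type~$\bar Y_0$, this intertwining is exactly what is needed for the left $L_{\bar k}$-action on $Y_{\bar k}\cong \bar Y_0$ to descend to an $L$-action on $Y$ over~$k$ making~$f$ equivariant; the image of the identity coset in $L_{\bar k}/N$ under any $X_{\bar k}$-isomorphism $Y_{\bar k} \cong \bar Y_0$ is then a lift $\bar y \in Y(\bar k)$ of~$\bar x$ whose stabiliser in~$L_{\bar k}$ is~$N$, verifying~$(\star)$.

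The main obstacle is this intertwining-and-descent step, which requires careful bookkeeping with the $\sigma$-algebraic maps from~\S\ref{subsubsec:sigmaalgebraic} to pass between the group-theoretic identifications and the actual scheme automorphisms of~$\bar Y_0$. Once it is settled, both assertions of the theorem follow from the displayed equality: density of~$X(k)$ in $X(k_\Omega)^{\Br_{\nr}(X)}$ is inherited from the density of~$Y(k)$ in $Y(k_\Omega)^{\Br_{\nr}(Y)}$ for each~$Y$ in the union; for the second implication, if $X(k_\Omega)^{\Br_{\nr}(X)}\neq\emptyset$ then the right-hand side is non-empty, hence at least one torsor $Y\to X$ of type~$\bar Y_0$ has $Y(k_\Omega)^{\Br_{\nr}(Y)}\neq\emptyset$, and applying the assumption to that~$Y$ yields $X(k)\neq\emptyset$ by projection.
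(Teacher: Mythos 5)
Your proposal is correct and follows essentially the same route as the paper: the descent type is $\bar Y = L_{\bar k}/N$, the sequence~\eqref{eq:gautgal} is identified with the quotient of~\eqref{eq:hegal} by $N(\bar k)$ via the $\sigma$\nobreakdash-algebraic automorphisms $\ell N \mapsto \sigma(\ell)m^{-1}N$, and Theorem~\ref{th:main} is applied after checking that every torsor of type~$\bar Y$ is a homogeneous space of~$L$ satisfying~$(\star)$. The only (harmless) divergence is in that last step, where you descend the $L$\nobreakdash-action directly from the intertwining identity $s_{(m,\sigma)}(\ell'\cdot y)=\sigma(\ell')\cdot s_{(m,\sigma)}(y)$, whereas the paper's Lemma~\ref{lem:yishomogeneous} obtains the same descent from the uniqueness of the lifted action over~$\bar k$ (via the argument of \cite[Proposition~5.1]{hwzceh}).
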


\begin{rmk}
\label{rmk:weakerstatement}
The weaker statement
 obtained by allowing~$Y$ to
range over all homogeneous spaces of~$L$ over~$k$ whose geometric stabilisers
are isomorphic to~$N$ as algebraic groups over~$\bar k$ is sufficient
for the applications
of Theorem~\ref{th:genthb}
 considered in this article.
\end{rmk}

When~$N = H^0_{\bar x}$, the first hypothesis of Theorem~\ref{th:genthb} is satisfied,
by Corollary~\ref{cor:hx0stable}. On the other hand,
Conjecture~\ref{conj:ct} holds for homogeneous
spaces of~$L$ with connected geometric stabilisers, by a theorem of Borovoi (see \cite[Corollary~2.5]{borovoi}).
Thus, we deduce:

\begin{cor}
\label{cor:genthb}
Let~$X$ be a homogeneous space of
 a connected linear algebraic group~$L$ over a number field~$k$.
Let $\bar x \in X(\bar k)$.
Assume that the group of connected components of the stabiliser of~$\bar x$
is supersolvable
in the sense of Definition~\ref{def:supersolvable},
with respect to the outer action of $\Gal(\bar k/k)$
given by
Corollary~\ref{cor:hx0stable}.
Then Conjecture~\ref{conj:ct} holds for $X$. 
\end{cor}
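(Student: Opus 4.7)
The plan is to derive Corollary~\ref{cor:genthb} as an immediate specialisation of Theorem~\ref{th:genthb}, taking the normal subgroup $N \subset H_{\bar x}$ of finite index to be $N = H_{\bar x}^0$, the connected component of the identity. The quotient $H_{\bar x}(\bar k)/N(\bar k)$ is then canonically identified with the finite group $\pi_0(H_{\bar x})$ of geometric connected components of the stabiliser, and the outer action of $\Gal(\bar k/k)$ on this quotient coming from~\eqref{eq:hegal} coincides with the outer action furnished by Corollary~\ref{cor:hx0stable}.

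First I would verify the two hypotheses of Theorem~\ref{th:genthb}. Hypothesis~(1), namely that the outer action of $\Gal(\bar k/k)$ on $H_{\bar x}(\bar k)$ stabilises $N(\bar k) = H_{\bar x}^0(\bar k)$, is exactly the assertion of Corollary~\ref{cor:hx0stable}. Hypothesis~(2), the supersolvability of $H_{\bar x}(\bar k)/N(\bar k) = \pi_0(H_{\bar x})$ with respect to the induced outer action, is precisely the standing assumption of Corollary~\ref{cor:genthb}.

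Finally I would check that every homogeneous space $Y$ of~$L$ satisfying condition~$(\star)$ falls into the scope of Borovoi's theorem \cite[Corollary~2.5]{borovoi}. Indeed, such a $Y$ admits a geometric point $\bar y \in Y(\bar k)$ whose stabiliser in~$L_{\bar k}$ equals $N = H_{\bar x}^0$, which is by definition a connected algebraic group. Hence~$Y$ is a homogeneous space of the connected linear algebraic group~$L$ over~$k$ with connected geometric stabilisers, and Conjecture~\ref{conj:ct} is known for~$Y$. Theorem~\ref{th:genthb} then transfers the validity of Conjecture~\ref{conj:ct} from the family of such~$Y$ to~$X$. There is no serious obstacle: the deep content sits inside Theorem~\ref{th:genthb} and Borovoi's result, and the only observation needed is that the choice $N = H_{\bar x}^0$ places the auxiliary spaces~$Y$ squarely in Borovoi's unconditional regime while leaving an outer Galois action on the component group~$\pi_0(H_{\bar x})$ to which the supersolvability hypothesis of Corollary~\ref{cor:genthb} applies.
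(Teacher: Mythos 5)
Your proposal is correct and is essentially the paper's own argument: the paper likewise takes $N=H_{\bar x}^0$, invokes Corollary~\ref{cor:hx0stable} for hypothesis~(1) of Theorem~\ref{th:genthb}, uses the stated supersolvability assumption for hypothesis~(2), and appeals to Borovoi's theorem for the auxiliary homogeneous spaces with connected geometric stabilisers. Nothing is missing.
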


Corollary~\ref{cor:genthb} simultaneously generalises
Borovoi's theorem mentioned above
(where the geometric stabilisers are connected)
and \cite[Théorème~B]{hwzceh} (where the geometric stabilisers are finite and supersolvable).
In fact, even in the particular case of finite and supersolvable geometric 
stabilisers,
Corollary~\ref{cor:genthb} strictly generalises \cite[Théorème~B]{hwzceh},
as it relaxes all hypotheses on the ambient linear group~$L$,
assumed
in \emph{loc.\ cit.}\ to be semi-simple and simply connected.
What is more,  when~$L$ is semi-simple and simply connected,
Corollary~\ref{cor:genthb} can be used to ensure the validity of
Conjecture~\ref{conj:ct} even in cases where the geometric stabilisers are not supersolvable,
as the following example shows.

\begin{example}
Assume that~$L$ is semi-simple and simply connected.
Then, by a theorem of Borovoi,
Conjecture~\ref{conj:ct}
holds for all~$Y$ as in Theorem~\ref{th:genthb} if~$N$ is abelian
 (see \cite[Corollary~2.5]{borovoi}).
Thus, Theorem~\ref{th:genthb} implies the validity of Conjecture~\ref{conj:ct} for any homogeneous
space of~$L$ whose geometric stabilisers are extensions of a supersolvable finite group by an abelian algebraic subgroup
(compatibly with the outer Galois action, as stated in Theorem~\ref{th:genthb}~(1)--(2)).
\end{example}

Combining
Theorem~\ref{th:genthb}
with the work of Neukirch~\cite{neukirch-solvable}
also yields Conjecture~\ref{conj:ct} for homogeneous spaces of~$\SL_n$ whose geometric stabilisers
can be written, compatibly with the outer action of~$\Gal(\bar k/k)$, as extensions
of a supersolvable finite group by a solvable finite group whose order is
coprime to the number of roots of unity in~$k$.

Non-solvable examples where Theorem~\ref{th:genthb} can be applied
will be discussed in \S\ref{subsec:non-solvable}.

\subsubsection{Proof of Theorem~\ref{th:genthb}} 
\label{subsubsec:proofthgenb}

Set $\bar Y=L_{\bar k}/N$. We view~$\bar Y$ as an $X_{\bar k}$\nobreakdash-scheme
through the projection
\begin{align}
\label{eq:lkbarnxkbar}
\bar Y = L_{\bar k}/N \to L_{\bar k}/H_{\bar x}=X_{\bar k}\rlap.
\end{align}
This projection is a torsor under $H_{\bar x}/N$, so that there is a natural short exact sequence
\begin{align}
\label{eq:autbaryxbark}
1 \to N(\bar k) \to H_{\bar x}(\bar k) \xrightarrow{\phi} \Aut(\bar Y/X_{\bar k}) \to 1\rlap.
\end{align}
Explicitly, the map~$\phi$ sends any $\ell \in H_{\bar x}(\bar k)$
to the automorphism
 of the variety~$\bar Y$ over~$\bar k$
which on $\bar k$\nobreakdash-points, i.e.\ on the quotient set $L(\bar k)/N(\bar k)$,
is given by $mN(\bar k) \mapsto m\ell^{-1}N(\bar k)$.

For the statement of the next lemma, we recall that $N(\bar k)$
is a normal subgroup of the middle term~$G_{\bar x}$ of~\eqref{eq:hegal},
as a consequence of assumption~(1) of Theorem~\ref{th:genthb}.

\begin{lem}
\label{lem:lkbarn is a finite descent type}
The $X_{\bar k}$\nobreakdash-scheme $\bar Y$ is a finite descent type on~$X$.
In addition, the short exact sequence~\eqref{eq:gautgal}
can be identified with the sequence obtained
from~\eqref{eq:hegal} by replacing the first two terms of~\eqref{eq:hegal}
with their quotients by the normal subgroup~$N(\bar k)$.
\end{lem}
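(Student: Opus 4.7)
The plan is to establish both assertions of the lemma simultaneously by exhibiting a group homomorphism $G_{\bar x} \to \Aut(\bar Y/X)$ with kernel $N(\bar k)$ whose restriction to $H_{\bar x}(\bar k)$ agrees with the map $\phi$ of~\eqref{eq:autbaryxbark}. As $N$ is a normal subgroup of finite index of $H_{\bar x}$ in characteristic~$0$, the quotient $H_{\bar x}/N$ is a finite \'etale group scheme over $\bar k$, and the projection $\bar Y = L_{\bar k}/N \to L_{\bar k}/H_{\bar x} = X_{\bar k}$ is a torsor under it. Hence $\bar Y \to X_{\bar k}$ is finite \'etale, and $\bar Y$ is irreducible as a quotient of the connected group $L_{\bar k}$ by a closed subgroup. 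The map $\phi$ then identifies $H_{\bar x}(\bar k)/N(\bar k)$ with $\Aut(\bar Y/X_{\bar k})$.

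For $(\ell,\sigma) \in G_{\bar x}$, I would define $\tau_{(\ell,\sigma)}:\bar Y \to \bar Y$ as the $\sigma$-algebraic map obtained by descending the $\sigma$-algebraic map $L_{\bar k}\to L_{\bar k}$, $m \mapsto \sigma(m)\ell^{-1}$, to the quotient $\bar Y = L_{\bar k}/N$. This map is itself $\sigma$-algebraic as the composition of the Galois action of $\sigma$ on $L_{\bar k}$ (Remarks~\ref{rmks:sigmaalg}~(iii)) with the algebraic map of right translation by $\ell^{-1}$. The key technical point is that the descent is licit precisely because assumption~(1) says that conjugation by $(\ell,\sigma)$ stabilises $N(\bar k)$: for $n \in N(\bar k)$ one computes $\sigma(mn)\ell^{-1} = \sigma(m)\ell^{-1} \cdot (\ell\sigma(n)\ell^{-1})$, and $\ell\sigma(n)\ell^{-1} \in N(\bar k)$ by assumption, so $\sigma(mn)\ell^{-1}$ and $\sigma(m)\ell^{-1}$ lie in the same right $N(\bar k)$-coset. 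One then checks that $\tau_{(\ell,\sigma)}$ covers the Galois action of $\sigma$ on $X_{\bar k}$---using that, under the identification $X(\bar k) = L(\bar k)/H_{\bar x}(\bar k)$, this action reads $mH_{\bar x}(\bar k) \mapsto \sigma(m)\ell^{-1}H_{\bar x}(\bar k)$, since $\ell\sigma(\bar x) = \bar x$---and that the assignment $(\ell,\sigma) \mapsto \tau_{(\ell,\sigma)}$ is a group homomorphism, by a direct calculation in the semidirect product.

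The kernel of this homomorphism is $N(\bar k)$: if $\tau_{(\ell,\sigma)} = \Id$, then $\sigma = \Id$ (by projecting to $\Gal(\bar k/k)$), and evaluating at the identity coset of $\bar Y$ gives $\ell^{-1} \in N(\bar k)$. The resulting injection $G_{\bar x}/N(\bar k) \hookrightarrow \Aut(\bar Y/X)$ fits into a commutative diagram of short exact sequences, namely $1 \to H_{\bar x}(\bar k)/N(\bar k) \to G_{\bar x}/N(\bar k) \to \Gal(\bar k/k) \to 1$ mapping into~\eqref{eq:gautgal}, in which the left vertical arrow is the isomorphism induced by $\phi$ and the right vertical arrow is the identity. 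The five lemma then shows that the middle arrow is an isomorphism, which simultaneously yields the surjectivity of $\Aut(\bar Y/X) \to \Gal(\bar k/k)$ (hence confirms that $\bar Y$ is a finite descent type on $X$, since we already know $\bar Y \to X_{\bar k}$ is Galois) and furnishes the identification of the two exact sequences asserted in the lemma.

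The main obstacle will be the descent step of the second paragraph: one must be attentive both to the $\sigma$-algebraic (rather than algebraic) nature of the maps involved, and to the fact that assumption~(1) provides only $\ell\sigma(N(\bar k))\ell^{-1} = N(\bar k)$ and not $\sigma(N(\bar k)) = N(\bar k)$, which is what forces the correct formula $m \mapsto \sigma(m)\ell^{-1}$ rather than the naive $m \mapsto \ell\sigma(m)$.
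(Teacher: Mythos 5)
Your proposal is correct and follows essentially the same route as the paper: both construct, for $(\ell,\sigma)\in G_{\bar x}$, the $X$\nobreakdash-automorphism of $\bar Y$ induced by the $\sigma$\nobreakdash-algebraic map $m\mapsto\sigma(m)\ell^{-1}$ (descending to $L_{\bar k}/N$ via assumption~(1)), check it covers the $\sigma$\nobreakdash-action on $X_{\bar k}$ using $\ell\sigma(\bar x)=\bar x$, and conclude from the resulting homomorphism $G_{\bar x}\to\Aut(\bar Y/X)$ and the map $\phi$ of~\eqref{eq:autbaryxbark} that $\rho$ is surjective and that the sequences match. Only a minor reordering is needed at the end: the surjectivity of $\Aut(\bar Y/X)\to\Gal(\bar k/k)$ should be read off first from the commutativity of the right-hand square (since $G_{\bar x}/N(\bar k)\to\Gal(\bar k/k)$ is onto), after which~\eqref{eq:gautgal} is short exact and the five lemma applies as you state.
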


\begin{proof}
Let
 $\sigma \in \Gal(\bar k/k)$
and $\ell \in L(\bar k)$ be such that $\ell\sigma(\bar x)=\bar x$.
By assumption~(1) of Theorem~\ref{th:genthb},
the automorphism $m \mapsto \ell \sigma(m) \ell^{-1}$ of~$L(\bar k)$
stabilises the subgroup $N(\bar k)$.
We deduce that the $\sigma$\nobreakdash-algebraic map
$L(\bar k)\to L(\bar k)$, $m \mapsto \sigma(m) \ell^{-1}$
induces a $\sigma$\nobreakdash-algebraic map
 $\bar Y(\bar k) \to \bar Y(\bar k)$.
The latter is the top horizontal arrow of a
commutative square
\begin{align}
\begin{aligned}
\label{eq:firstsquare proof finite descent type}
\xymatrix@R=3ex{
\bar Y(\bar k) \ar[r] \ar[d] & \bar Y(\bar k) \ar[d] \\
X(\bar k) \ar[r] & X(\bar k)
}
\end{aligned}
\end{align}
whose lower horizontal arrow
is the $\sigma$\nobreakdash-algebraic map
$m \mapsto \sigma(m)$
and whose vertical arrows are given by $m \mapsto m\bar x$
(i.e.\ are induced by~\eqref{eq:lkbarnxkbar}).
As the horizontal arrows are $\sigma$\nobreakdash-algebraic and the vertical
ones are algebraic, the square~\eqref{eq:firstsquare proof finite descent type}
is induced on closed points by a commutative square of schemes
\begin{align}
\begin{aligned}
\label{eq:diag proof finite descent type}
\xymatrix@C=8em@R=3ex{
\bar Y \ar[r] \ar[d] & \bar Y \ar[d] \\
X_{\bar k} \ar[r]^{\Id_X \times_k \Spec\big(\sigma^{-1}\big)} & X_{\bar k}
}
\end{aligned}
\end{align}
whose horizontal arrows are $\sigma$-algebraic maps and both of whose vertical arrows are the projection~\eqref{eq:lkbarnxkbar}.
Hence the top horizontal arrow is an automorphism
of the $X$\nobreakdash-scheme $\bar Y$.

With~$G_{\bar x}$ as in~\eqref{eq:hegal},
let $\psi:G_{\bar x} \to \Aut(\bar Y/X)$ denote the map
that sends $(\ell,\sigma)$ to this $X$\nobreakdash-scheme automorphism of~$\bar Y$
and let $\rho:\Aut(\bar Y/X) \to \Aut(X_{\bar k}/X)=\Gal(\bar k/k)$
denote the natural morphism.
One readily checks that~$\psi$ is a homomorphism
and that the
 exact sequence~\eqref{eq:hegal}
fits into a commutative
 diagram
\begin{align*}
\xymatrix@R=3ex@C=2em{
1 \ar[r] & H_{\bar x}(\bar k) \ar[r] \ar[d]^(.45)\phi & G_{\bar x} \ar[d]^(.45)\psi
\ar[r] & \Gal(\bar k/k)
 \ar@{=}[d] \ar[r] & 1 \\
1 \ar[r] & \Aut(\bar Y/X_{\bar k}) \ar[r] &
\Aut(\bar Y/X)
\ar[r]^(.51)\rho & \Gal(\bar k/k)\rlap,
}
\end{align*}
where~$\phi$ comes from~\eqref{eq:autbaryxbark}.
(The commutativity of the left square follows from the explicit descriptions of~$\psi$ and~$\phi$;
that of the right square follows from Remarks~\ref{rmks:sigmaalg}~(iv) and~(i).)
This diagram shows that~$\rho$ is surjective, so that~$\bar Y$ is indeed a
finite descent type on~$X$.
In addition, it follows from this diagram
and from the exact sequence~\eqref{eq:autbaryxbark}
that the exact sequence~\eqref{eq:gautgal}
can be identified
as indicated in the statement of the lemma.
\end{proof}

\begin{lem}
\label{lem:yishomogeneous}
Let $Y \to X$ be a torsor of type~$\bar Y$.
There exists a unique action of~$L$ on~$Y$ such that the morphism $Y\to X$
is $L$\nobreakdash-equivariant.
With respect to this action, the variety~$Y$ is a homogeneous space of~$L$,
and there exists a lifting $\bar y \in Y(\bar k)$ of~$\bar x$ whose stabiliser,
as an algebraic subgroup of~$L_{\bar k}$, is equal to~$N$.
\end{lem}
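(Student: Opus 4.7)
The plan is to construct the $L$\nobreakdash-action on~$Y$ by Galois descent from the canonical action on~$\bar Y=L_{\bar k}/N$, and then to read off homogeneity and the existence of the lift~$\bar y$ from the description over~$\bar k$.

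Uniqueness I would dispatch first. If $\mu_1,\mu_2:L\times_k Y\to Y$ are two $L$\nobreakdash-actions compatible with the action $a:L\times_k X\to X$, the pair $(\mu_1,\mu_2)$ defines a morphism $L\times_k Y\to Y\times_X Y$. Since $f:Y\to X$ is étale, the diagonal is open and closed in $Y\times_X Y$. The identity axiom $\mu_i(e,-)=\Id_Y$ places $\{e\}\times Y$ inside the diagonal, and the irreducibility of $L\times_k Y$---which follows from the connectedness of~$L$ and the irreducibility of~$\bar Y$ (part of the definition of a finite descent type)---forces the whole morphism $(\mu_1,\mu_2)$ to land in the diagonal.

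For existence, I would start from the observation that $\bar Y=L_{\bar k}/N$ carries a canonical left $L_{\bar k}$\nobreakdash-action making the projection to~$X_{\bar k}$ equivariant, and that this action commutes with the right action of $\Aut(\bar Y/X_{\bar k})=H_{\bar x}(\bar k)/N(\bar k)$ on~$\bar Y$ by right multiplication on cosets. Fix any $X_{\bar k}$\nobreakdash-isomorphism $\iota:Y_{\bar k}\isoto \bar Y$, and transport via~$\iota$ to obtain $\mu_{\bar k}:L_{\bar k}\times_{\bar k}Y_{\bar k}\to Y_{\bar k}$; the commutation just noted shows that $\mu_{\bar k}$ does not depend on~$\iota$. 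To descend $\mu_{\bar k}$ to~$k$, I would use Proposition~\ref{prop:splittings}~(i) together with Lemma~\ref{lem:lkbarn is a finite descent type} to identify the Galois descent datum on~$Y_{\bar k}$ with a splitting of~\eqref{eq:gautgal} encoded as a map $\sigma\mapsto (l_\sigma,\sigma)\in G_{\bar x}$ well-defined modulo~$N(\bar k)$, for which $\sigma$ acts on~$\bar Y(\bar k)=L(\bar k)/N(\bar k)$ by the $\sigma$\nobreakdash-algebraic map $mN(\bar k)\mapsto \sigma(m)l_\sigma^{-1}N(\bar k)$ (as recorded in the proof of Lemma~\ref{lem:lkbarn is a finite descent type}). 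The computation
\[
\sigma(lm)l_\sigma^{-1}N(\bar k)=\sigma(l)\cdot\bigl(\sigma(m)l_\sigma^{-1}N(\bar k)\bigr)
\]
then shows that $\mu_{\bar k}$ is Galois-equivariant, hence descends to an $L$\nobreakdash-action $\mu:L\times_k Y\to Y$ over~$k$.

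Finally, I would take $\bar y=\iota^{-1}(eN(\bar k))\in Y(\bar k)$: its image under~$f$ is $eH_{\bar x}(\bar k)=\bar x$, its stabiliser in~$L_{\bar k}$ for the left multiplication action on $L_{\bar k}/N$ is~$N$, and transitivity of the action of $L_{\bar k}$ on $Y_{\bar k}$ is immediate. The main obstacle is the descent step, since~$N$ is not Galois-stable as an algebraic subgroup of~$L_{\bar k}$---only its group of $\bar k$\nobreakdash-points is stable under the outer Galois action on~$H_{\bar x}(\bar k)$, by hypothesis~(1); but this is precisely what Lemma~\ref{lem:lkbarn is a finite descent type} packages for us, so the whole verification collapses to the single-line computation above.
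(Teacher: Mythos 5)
Your proof is correct, and it follows the same basic idea as the paper---descend the canonical left action of $L_{\bar k}$ on $\bar Y=L_{\bar k}/N$---but it executes both halves differently. For uniqueness, the paper simply invokes the first paragraph of the proof of Proposition~5.1 of~\cite{hwzceh}; your argument via the morphism $L\times_k Y\to Y\times_X Y$, the open-and-closed diagonal of the \'etale separated map $f$, the identity section $\{e\}\times Y$, and the connectedness of $L\times_k Y$ is a complete, self-contained version of essentially that argument, and it is sound (geometric irreducibility of $Y$ comes from $Y_{\bar k}\simeq\bar Y$). For existence, the paper takes a slicker route: it reduces to $k=\bar k$ by observing that uniqueness over $\bar k$ already yields Galois descent of the action for free (the conjugate of the action by any element of the descent datum is another action compatible with $f$, hence coincides with it), so no computation with the splitting is needed; over $\bar k$ existence is immediate. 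You instead verify equivariance of the canonical action with respect to the descent datum by the explicit formula $mN(\bar k)\mapsto\sigma(m)l_\sigma^{-1}N(\bar k)$ coming from Proposition~\ref{prop:splittings}~(i) and Lemma~\ref{lem:lkbarn is a finite descent type}, which is correct (the verification on $\bar k$-points suffices since all schemes involved are reduced and of finite type, in line with the $\sigma$-algebraic formalism of the paper), and your observation that the left $L_{\bar k}$-action commutes with $\Aut(\bar Y/X_{\bar k})$ acting by right translations correctly shows independence of $\iota$. What each approach buys: the paper's is shorter and computation-free but leans on an external reference; yours is fully self-contained and makes the descent datum explicit---and in fact you could have shortened it by applying your own uniqueness statement over $\bar k$ to conclude Galois-equivariance without the cocycle computation.
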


\begin{proof}
This lemma is valid over any field~$k$ of characteristic~$0$.
In order to prove it, we may and will assume that $k=\bar k$.
Indeed, by Galois descent, the existence of the action of~$L$ on~$Y$
follows from its existence and unicity over~$\bar k$; and all other
conclusions of the lemma are of a geometric nature.
Let us write $X=L/H_{\bar x}$ and fix an $X$\nobreakdash-isomorphism $Y \simeq \bar Y=L/N$.
The existence of an action of~$L$ on~$Y$ satisfying all of the conclusions
of the lemma is now obvious, and we need only check its unicity.
For the latter,
the first paragraph of the proof of
 \cite[Proposition~5.1]{hwzceh} applies verbatim
(and it does not depend on the
hypotheses of semi-simplicity and simple connectedness made in \emph{loc.\ cit.}).
\end{proof}

By Lemma~\ref{lem:lkbarn is a finite descent type}
and by assumption~(2) of Theorem~\ref{th:genthb},
the $X_{\bar k}$\nobreakdash-scheme~$\bar Y$ is a
supersolvable finite descent type on~$X$.
Moreover, Lemma~\ref{lem:yishomogeneous}
and the final assumption of Theorem~\ref{th:genthb}
ensure that for any torsor $Y \to X$ of type~$\bar Y$,
the set $Y(k)$ is a dense subset of $Y(k_\Omega)^{\Br_{\nr}(Y)}$
(resp.\ the implication
$Y(k_\Omega)^{\Br_{\nr}(Y)}\neq\emptyset \Rightarrow Y(k)\neq\emptyset$
holds).
Theorem~\ref{th:main} now implies
that the set $X(k)$ is a dense subset of $X(k_\Omega)^{\Br_{\nr}(X)}$
(resp.\ that
$X(k_\Omega)^{\Br_{\nr}(X)}\neq\emptyset \Rightarrow X(k)\neq\emptyset$).
Thus, Theorem~\ref{th:genthb} is proved.

\subsubsection{A special case: homogeneous spaces of~$\SL_n$ with finite stabilisers}
\label{subsec:non-solvable}

We now spell out a useful corollary of Theorem~\ref{th:genthb} in the special case where $L=\SL_n$
(Corollary~\ref{cor:almost-complete} below).
We shall apply it
in~\textsection\ref{subsec:inversegalois}
to the inverse Galois problem and to the Grunwald problem.

To prepare for the statement of Corollary~\ref{cor:almost-complete}, let us
recall that a finite group is said to be \emph{complete} if its centre is trivial
and all its automorphisms are inner.
We shall say that a finite group~$N$ is \emph{almost complete} if its centre is trivial
and the homomorphism $\Aut(N)\to \Out(N)$ admits a section.

\begin{cor}
\label{cor:almost-complete}
Let $G$ be a finite group equipped with an outer action of~$\Gal(\bar k/k)$.
Let~$X$ be a homogeneous space of~$\SL_n$ over a number field~$k$,
with geometric stabilisers isomorphic to~$G$ as groups endowed with an outer action of~$\Gal(\bar k/k)$.
Let $N \subseteq G$ be a normal subgroup stable under the outer action of~$\Gal(\bar k/k)$.
Assume that the  group $G/N$ is supersolvable,
in the sense of Definition~\ref{def:supersolvable},
with respect to the induced outer action of~$\Gal(\bar k/k)$. Then:
\begin{enumerate}
\item[(i)]
If the finite group~$N$ is almost complete, then $X(k_{\Omega})^{\Br_{\nr}(X)} \neq \emptyset \Rightarrow X(k) \neq \emptyset$.
\item[(ii)]
If the finite group~$N$ is almost complete and if,
for any
 finite étale subgroup scheme~$\tilde N$ of~$\SL_n$ over~$k$ such that the groups $\tilde N(\bar k)$
and~$N$ are isomorphic,
the weak approximation property holds for the quotient variety
$\SL_n/\tilde N$,
then Conjecture~\ref{conj:ct} holds for~$X$.
\item[(iii)]
If the finite group~$N$ is complete and if,
for any embedding $N \hookrightarrow \SL_n(k)$,
letting~$\tilde N$ denote the constant subgroup scheme of~$\SL_n$ with $\tilde N(k)=N$,
the weak approximation property holds for the quotient variety
$\SL_n/\tilde N$,
then Conjecture~\ref{conj:ct} holds for~$X$.
\end{enumerate}
\end{cor}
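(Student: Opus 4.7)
The plan is to apply Theorem~\ref{th:genthb} with $L=\SL_n$ and the given~$N$, regarded as an algebraic subgroup of the geometric stabiliser $H_{\bar x}$. The two hypotheses of Theorem~\ref{th:genthb}---that the outer action of $\Gal(\bar k/k)$ on $H_{\bar x}(\bar k)$ stabilises $N(\bar k)$, and that the quotient $H_{\bar x}(\bar k)/N(\bar k)\cong G/N$ is supersolvable---are precisely the given assumptions of the corollary. It therefore suffices, for each part, to verify the required conclusion for every homogeneous space $Y$ of $\SL_n$ satisfying condition~$(\star)$ of Theorem~\ref{th:genthb}.

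The key intermediate claim is that when $N$ is almost complete, every such $Y$ is isomorphic, as an $\SL_n$-variety, to a quotient $\SL_n/\tilde N$ for some finite étale $k$-subgroup scheme $\tilde N\subseteq\SL_n$ with $\tilde N(\bar k)\cong N$. I~would prove this in two steps: first, the splitting $\Out(N)\to\Aut(N)$ provided by almost completeness lifts the outer Galois action on~$N$ to an honest $\Gal(\bar k/k)$-action, equipping~$N$ with a $k$-form as an étale group scheme; second, the triviality of $Z(N)$ kills the Springer-band (gerbe-theoretic) obstruction---which for a stabiliser with trivial centre lives in $H^2(k,Z(N))=0$---to realising $Y$ as the quotient of $\SL_n$ by a $k$-subgroup scheme of $\SL_n$ modelling this form.

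With this reduction in hand, parts~(i)--(iii) follow readily. For~(i), the variety $\SL_n/\tilde N$ carries the $k$-rational point $e\tilde N$, so $Y(k)\neq\emptyset$ unconditionally and the implication $Y(k_\Omega)^{\Br_{\nr}(Y)}\neq\emptyset\Rightarrow Y(k)\neq\emptyset$ is automatic. For~(ii), the assumed weak approximation for $\SL_n/\tilde N$ gives density of $Y(k)$ in $Y(k_\Omega)$, a fortiori in the Brauer--Manin set $Y(k_\Omega)^{\Br_{\nr}(Y)}$, yielding Conjecture~\ref{conj:ct} for~$Y$. For~(iii), the completeness of~$N$ forces $\Out(N)=1$ and $\Aut(N)=\Inn(N)=N$, so every relevant~$\tilde N$ is an inner form of the constant group scheme; a short argument using the fact that inner twisting within~$\SL_n$ amounts to conjugation by an element of $\SL_n$ then shows that $\SL_n/\tilde N$ is isomorphic as a variety to $\SL_n/\tilde N'$ for $\tilde N'$ the constant subgroup scheme attached to some embedding $N\hookrightarrow\SL_n(k)$, placing us within the scope of the hypothesis.

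The main obstacle is the intermediate claim reducing~$Y$ to the form $\SL_n/\tilde N$; its proof relies on Springer's non-abelian $H^2$ formalism for homogeneous spaces of simply connected semisimple groups and crucially exploits both features of ``almost completeness'': the section of $\Aut(N)\to\Out(N)$ to produce a $k$-rational étale model of~$N$ in the first place, and the triviality of $Z(N)$ to neutralise the gerbe-theoretic obstruction to realising this model as a subgroup scheme of $\SL_n$ cutting out~$Y$.
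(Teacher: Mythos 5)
Your proposal is correct in substance and reduces to Theorem~\ref{th:genthb} exactly as the paper does, but it runs the key step through Springer's non-abelian $H^2$/gerbe formalism, whereas the paper's proof is deliberately elementary: it proves Lemma~\ref{lem:observation} (almost completeness, resp.\ completeness, of~$N$ is equivalent to every profinite extension $1\to N\to G\to H\to 1$ splitting as a semi-direct, resp.\ direct, product), applies it to the extension $1\to H_{\bar y}(\bar k)\to G_{\bar y}\to\Gal(\bar k/k)\to 1$ attached to a point $\bar y\in Y(\bar k)$, and converts the resulting splitting, via Hilbert's Theorem~90 for~$\SL_n$, into an explicit $b\in\SL_n(\bar k)$ with $b\bar y\in Y(k)$; then $Y=\SL_n/\tilde N$ with~$\tilde N$ the stabiliser of~$b\bar y$, and for~(iii) the splitting is chosen so that its image centralises~$N$, which makes~$\tilde N$ constant at once. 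Two points in your version deserve care. First, the phrase ``the obstruction lives in $H^2(k,Z(N))=0$'' is not accurate on its own: triviality of $Z(N)$ only makes the class in non-abelian $H^2$ with the given band unique, and neutrality is not automatic; it genuinely requires your first step (the section $\Out(N)\to\Aut(N)$ makes the band representable, the associated split extension supplies a neutral base point, and only then does $H^2(k,Z(N))=0$ force the Springer class of~$Y$ to coincide with it), and one also needs $H^1(k,\SL_n)=1$ to pass from ``dominated by a torsor'' to an actual rational point and hence to $Y\simeq\SL_n/\tilde N$. Second, your ``short argument'' in~(iii) does go through but is not purely formal: writing the Galois action on $\tilde N(\bar k)$ as conjugation by $\phi(\sigma)\in\tilde N(\bar k)$, one checks that $\sigma\mapsto\phi(\sigma)^{-1}$ is a continuous cocycle, applies Hilbert~90 to get~$b$ with $b^{-1}\sigma(b)=\phi(\sigma)^{-1}$, and verifies that right translation by~$b^{-1}$ descends to a $k$\nobreakdash-isomorphism $\SL_n/\tilde N\isoto\SL_n/\tilde N'$ with $\tilde N'=b\tilde Nb^{-1}$ constant---precisely the computation the paper sidesteps by choosing a centralising splitting from the start. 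In the end both arguments rest on the same two ingredients (the section of $\Aut(N)\to\Out(N)$ together with $Z(N)=1$, and Hilbert~90 for~$\SL_n$); yours buys conceptual context from the general theory of homogeneous spaces, while the paper's buys a self-contained proof via Lemma~\ref{lem:observation}.
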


In~(ii) of Corollary~\ref{cor:almost-complete},
we do not require any compatibility between the Galois action on~$\tilde N(\bar k)$
and the given outer Galois action on~$G$.
  One could obtain a slightly
more precise statement by doing so (see Remark~\ref{rmk:weakerstatement}).

\begin{proof}[Proof of Corollary~\ref{cor:almost-complete}]
By Theorem~\ref{th:genthb},
it is enough to prove that for any homogeneous space~$Y$ of~$\SL_n$ over~$k$ whose geometric stabilisers
are isomorphic, as abstract groups, to~$N$,
Conjecture~\ref{conj:ct} holds for~$Y$ (resp.\ the implication
$Y(k_\Omega)^{\Br_{\nr}(Y)}\neq\emptyset \Rightarrow Y(k)\neq\emptyset$)
holds) if the assumptions of~(ii)--(iii) (resp.\ of~(i)) are satisfied.
We shall see that~$Y$ even satisfies the weak approximation property (resp.\ that $Y(k)\neq\emptyset$
unconditionally).

Let us fix a point $\bar y \in Y(\bar k)$ and a group isomorphism $H_{\bar y}(\bar k)\simeq N$,
and consider the resulting exact sequence
\begin{align}
\label{eq:hegaly}
1 \to N \to G_{\bar y}  \to \Gal(\bar k/k) \to 1\rlap,
\end{align}
where $G_{\bar y} = \big\{ (\ell, \sigma) \in \SL_n(\bar k) \rtimes \Gal(\bar k/k)
\mkern2mu;\mkern1.5mu \ell\sigma(\bar y)=\bar y\big\}$.
We endow $\SL_n(\bar k) \rtimes \Gal(\bar k/k)$ with the product of the discrete topology
on~$\SL_n(\bar k)$ and the Krull topology on~$\Gal(\bar k/k)$, and~$G_{\bar y}$ with the induced topology,
so that~\eqref{eq:hegaly} becomes an exact sequence of profinite groups.
By Lemma~\ref{lem:observation} below and by the hypothesis that~$N$ is almost complete,
this sequence admits a continuous homomorphic splitting $s:\Gal(\bar k/k)\to G_{\bar y}$
and we can assume that the image of~$s$ commutes with $N \subset G_{\bar y}$ if in addition~$N$ is complete.
Composing~$s$ with the projection $G_{\bar y}\to \SL_n(\bar k)$ yields a continuous
cocycle $\Gal(\bar k/k)\to \SL_n(\bar k)$.  As the Galois cohomology set $H^1(k,\SL_n)$ is a singleton
(Hilbert's Theorem~90), there exists $b \in \SL_n(\bar k)$
such that $s(\sigma)=(b^{-1}\sigma(b), \sigma)$ for all $\sigma \in \Gal(\bar k/k)$.
The very definition of~$G_{\bar y}$ now shows that $\sigma(b\bar y)=b\bar y$ for all $\sigma \in \Gal(\bar k/k)$,
in other words $b\bar y \in Y(k)$.  This already proves that $Y(k)\neq \emptyset$ and hence takes care of
Corollary~\ref{cor:almost-complete}~(i).
Let us denote by $\tilde N \subset \SL_n$ the stabiliser of the rational point $b\bar y \in Y(k)$, so that $Y=\SL_n/\tilde N$.
As $\tilde N(\bar k)=b H_{\bar y}(\bar k) b^{-1}$, the groups $\tilde N(\bar k)$ and~$N$ are isomorphic,
and
Corollary~\ref{cor:almost-complete}~(ii) follows.  Finally, the condition that the image of~$s$ commutes with~$N$ is equivalent to
 $\sigma(bhb^{-1})=bhb^{-1}$ for all $\sigma \in \Gal(\bar k/k)$ and all $h \in H_{\bar y}(\bar k)$;
therefore this condition implies that $\tilde N$ is a constant group scheme over~$k$, and
Corollary~\ref{cor:almost-complete}~(iii) is proved.
\end{proof}

\begin{lem}\label{lem:observation}
Let~$N$ be a finite group with trivial centre. Then
\begin{enumerate}
\item
$N$ is almost complete if and only if every short exact sequence of profinite groups
\begin{align*}
1 \to N \to G \to H \to 1
\end{align*}
splits as a semi-direct product of profinite groups $G \cong N \rtimes H$;
\item
$N$ is complete if and only if every short exact sequence of profinite groups
\[ 1 \to N \to G \to H \to 1 \]
splits as a direct product of profinite groups $G \cong N \times H$. 
\end{enumerate}
\end{lem}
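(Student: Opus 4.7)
The plan is to reduce both parts of the lemma to the standard observation that, since $Z(N)=1$, any extension of profinite groups $1 \to N \to G \to H \to 1$ is canonically isomorphic to a pullback
\begin{align*}
G \isoto \Aut(N) \times_{\Out(N)} H\rlap,
\end{align*}
where the map $H \to \Out(N)$ is the outer action of~$H$ on~$N$ induced by the extension. The isomorphism in question is $g \mapsto (c_g, \bar g)$, where $c_g \in \Aut(N)$ is conjugation by~$g$ and $\bar g\in H$ is the image of~$g$. Injectivity uses $Z(N)=1$ directly; surjectivity follows by lifting $h\in H$ to some $g_0\in G$ and then multiplying by the unique $n\in N$ such that $c_n$ compensates $c_{g_0}$ to produce a prescribed automorphism in the same $\Out(N)$\nobreakdash-class. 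Under this identification, continuous homomorphic splittings of $G\to H$ are in bijection with continuous lifts of the outer action $H\to \Out(N)$ to a homomorphism $H\to \Aut(N)$. All continuity assertions are automatic because $\Aut(N)$ and $\Out(N)$ are finite discrete groups.

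For the backward direction of both parts, I would apply the splitting hypothesis to the finite (hence profinite) extension $1 \to N \to \Aut(N) \to \Out(N) \to 1$. In case~(1), a semi-direct-product splitting of this particular extension is exactly a section of $\Aut(N)\to\Out(N)$, i.e.\ the almost-completeness of~$N$. In case~(2), a direct-product decomposition $\Aut(N)\cong N\times\Out(N)$ means that every lift $\alpha\in\Aut(N)$ of an element of $\Out(N)$ commutes with every $c_g\in\Inn(N)$. Writing $\alpha\circ c_g = c_g\circ \alpha$ gives $g^{-1}\alpha(g)\in Z(N)$ for all $g\in N$; since $Z(N)=1$, this forces $\alpha=\Id$, hence $\Out(N)=1$, i.e.\ $N$ is complete.

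For the forward direction of~(1), given a section $s:\Out(N)\to\Aut(N)$ and any extension with outer action $\phi:H\to\Out(N)$, the composition $s\circ\phi:H\to\Aut(N)$ is a continuous lift of~$\phi$, which via the pullback description produces the required splitting $H\to G$ as a semi-direct product. For the forward direction of~(2), completeness of~$N$ means $\Out(N)=1$, so the pullback collapses to $\Aut(N)\times H = N\times H$, giving at once the direct-product decomposition.

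There is no serious obstacle: the content of the lemma is essentially encapsulated in the pullback identification of~$G$, which is a standard consequence of $Z(N)=1$. The only step meriting a line of justification is the surjectivity in that identification. Continuity plays no real role because $\Aut(N)$ is a finite group.
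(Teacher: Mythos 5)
Your proof is correct and follows essentially the same route as the paper: the pullback identification $G \cong \Aut(N)\times_{\Out(N)}H$ you establish is exactly the paper's observation that the given extension is obtained by pull-back from $1\to N\to \Aut(N)\to\Out(N)\to 1$, and both directions of each part are then handled in the same way (apply the hypothesis to that sequence for "only if", transport its splitting along the pullback for "if"). Your version just spells out the pullback isomorphism more explicitly; aside from the harmless slip of saying "every lift" where you mean the lifts lying in the direct-factor complement, nothing differs in substance.
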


\begin{proof}
As the centre of~$N$ is trivial, the group of inner automorphisms of~$N$ can be identified with~$N$ and
we have a short exact sequence of finite groups
\begin{align}
\label{eq:nautnoutn}
1 \to N \to \Aut(N) \to \Out(N) \to 1\rlap.
\end{align}
If~\eqref{eq:nautnoutn} splits as a semi-direct product, then~$N$ is almost complete, by definition.
If~\eqref{eq:nautnoutn}
splits as a direct product, then the outer action of~$\Out(N)$ on~$N$ induced by~\eqref{eq:nautnoutn}
is trivial.  As this outer action coincides with the canonical outer action of~$\Out(N)$ on~$N$,
it follows that $\Out(N)$ is trivial, i.e.\ $N$ is complete.

Conversely, let us assume that~$N$ is almost complete (resp.\ complete).  Any short exact sequence
as in the statement of the lemma canonically fits into a commutative diagram
\begin{align*}
\xymatrix@R=3ex{
1 \ar[r] & N \ar[r]\ar@{=}[d] & G \ar[r]\ar[d] & H \ar[d]\ar[r] & 1 
\\
1 \ar[r] & N \ar[r] & \Aut(N) \ar[r] & \Out(N) \ar[r] & 1\rlap,
}
\end{align*}
where the middle vertical arrow sends $g \in G$ to the automorphism $z\mapsto gzg^{-1}$ of~$N$.
Thus, the upper row is obtained by pull-back from the lower row, and
the upper row splits as a semi-direct (resp.\ direct)
product if so does the lower row.
\end{proof}


A full characterisation of almost complete simple groups is established
in~\cite{lmm03}. These include for example all the simple alternating
groups $A_n$ for $n \neq 6$, all the sporadic simple groups, and all
Chevalley groups $L(\F_p)/Z(L(\F_p))$ where~$p\geq 5$ is a prime and~$L$ is
a split simple simply connected algebraic group over~$\F_p$ (see
\cite{lmm03} and \cite[p.~A-14]{borelproperties}).  In addition, if $G$ is
a finite group all of whose composition factors are almost complete simple groups, then
$G$ itself is almost complete, as can be seen by
mimicking the proof of~\cite[Theorem~2]{anti-solvable}
and exploiting Lemma~\ref{lem:observation} above.

These remarks already provide many examples to which Corollary~\ref{cor:almost-complete}~(i) can be applied.
We now illustrate,
in Corollary~\ref{cor:symmetricalternating},
cases~(ii) and~(iii) of Corollary~\ref{cor:almost-complete}.

\begin{cor}
\label{cor:symmetricalternating}
Let $G$ be a finite group equipped with an outer action of~$\Gal(\bar k/k)$.
Let $N \subseteq G$ be a normal subgroup stable under this outer action.
Assume that the group $G/N$ is supersolvable,
in the sense of Definition~\ref{def:supersolvable},
with respect to the induced outer action of~$\Gal(\bar k/k)$.
Assume that one of the following two conditions holds:
\begin{enumerate}
\item $N$ is isomorphic to the symmetric group~$S_m$ with $m\neq 6$;
\item $N$ is isomorphic to the alternating group~$A_5$.
\end{enumerate}
Then Conjecture~\ref{conj:ct} holds for any homogeneous space of~$\SL_n$
whose geometric stabilisers are isomorphic,
as groups endowed with an
outer action of~$\Gal(\bar k/k)$, to~$G$.
\end{cor}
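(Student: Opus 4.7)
The plan is to reduce to Corollary~\ref{cor:almost-complete} by verifying the algebraic hypothesis on~$N$ (trivial centre plus appropriate splitting of $\Aut(N) \to \Out(N)$) and the weak approximation hypothesis on the resulting quotient varieties $\SL_n/\tilde N$.

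For the algebraic input I would appeal to classical finite group theory. For $m \geq 3$ with $m \neq 6$, the symmetric group $S_m$ has trivial centre and $\Out(S_m)$ is trivial, so $S_m$ is complete; the excluded values $m \in \{1,2\}$ give trivial or abelian~$N$ and are already covered by the results on abelian stabilisers discussed in \S\ref{subsec:homogeneous-spaces}. The alternating group $A_5$ is non-abelian simple, hence has trivial centre, and $\Aut(A_5) \cong S_5$ gives $\Out(A_5) \cong \Z/2\Z$; the natural inclusion $A_5 \subset S_5 = \Aut(A_5)$, combined with any transposition in $S_5 \smallsetminus A_5$, supplies a splitting of $\Aut(A_5) \to \Out(A_5)$, showing that $A_5$ is almost complete. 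This puts us in position to apply Corollary~\ref{cor:almost-complete}~(iii) in case~(1) and Corollary~\ref{cor:almost-complete}~(ii) in case~(2).

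The remaining task is to verify weak approximation for the quotients $\SL_n/\tilde N$ that appear. I would proceed via the no-name lemma of Bogomolov--Katsylo: if $V$ is any faithful linear representation of~$\tilde N$ over~$k$, the variety $\SL_n/\tilde N$ is stably $k$-birational to $V/\tilde N$, so it is enough to exhibit a faithful representation $V$ for which $V/\tilde N$ is $k$-rational. In case~(1), $\tilde N$ is the constant group scheme with $\tilde N(k) = S_m$; the standard permutation representation on $\A^m_k$ yields $\A^m_k/\tilde N \cong \Spec k[e_1,\dots,e_m] \cong \A^m_k$ by Newton's theorem on elementary symmetric polynomials. In case~(2), the $k$-forms of~$A_5$ are classified by $H^1(k, \Out(A_5)) = H^1(k, \Z/2\Z)$, corresponding to quadratic \'etale $k$-algebras; Maeda's theorem gives the rationality of $\A^5_k/A_5$ in the split case, and for the non-split forms one observes that $\Aut(A_5) = S_5$ acts on the permutation representation extending the $A_5$-action, so by Galois descent the twisted quotient is $k$-birational to a $k$-form of~$\A^5_k/A_5$, which remains rational.

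The main obstacle is the twisted version of Maeda's theorem in case~(2): one must ensure that the rationality of $\A^5_k/A_5$ can be realised $S_5$-equivariantly, which then permits descent to the quadratic twists of~$A_5$. Since $A_5$ is characteristic in~$S_5$ and Maeda's rationality can be arranged with invariants compatible with the full $S_5$-action, this compatibility should go through and complete the verification.
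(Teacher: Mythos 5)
Your case~(1) follows the paper's route: $S_m$ is complete for $m\notin\{2,6\}$, Corollary~\ref{cor:almost-complete}~(iii) applies, and weak approximation for $\SL_n/\tilde N$ follows from the no-name reduction to the symmetric-function quotient $\A^m_k/S_m\simeq\A^m_k$ (the paper phrases this as the positive answer to the Noether problem); your treatment of $m\le 2$ via abelian stabilisers is a harmless variant of the paper's appeal to Corollary~\ref{cor:genthb}. The reduction in case~(2) to Corollary~\ref{cor:almost-complete}~(ii) via almost completeness of $A_5$ also matches the paper.

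In case~(2), however, there is a genuine gap exactly at the point you flag as ``the main obstacle''. After the no-name reduction you must prove rationality (or at least weak approximation) for the \emph{twisted} quotients: $\tilde N$ is a possibly non-constant form of $A_5$, classified by $H^1(k,\Aut(A_5))=H^1(k,S_5)$, not by $H^1(k,\Out(A_5))$ as you assert (inner twists already give non-constant group schemes); what is true is that the induced twist of the quotient $\A^5/A_5$ only depends on the image in $\Out(A_5)=\Z/2\Z$, since the $S_5$-action on $\A^5/A_5$ factors through $S_5/A_5$. But a $k$-form of a rational variety need not be rational, so Maeda's theorem (which concerns the split quotient $\A^5_k/A_5$) does not by itself give rationality of the quadratic twist; you would need an $S_5$-equivariant version of Maeda's construction with a linearisable residual involution on the target, and you offer no argument that such a compatible choice exists --- ``should go through'' is precisely the unproved step. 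The paper avoids this issue entirely by a different geometric device: it replaces the linear representation by the split del Pezzo surface $Y$ of degree~$5$ with $\Aut(Y)\simeq S_5$, twists $Y$ and the constant group scheme by the homomorphism $\chi:\Gal(\bar k/k)\to S_5$ describing $\tilde N$, and compares $\SL_n/\tilde N$ with $Y'/\tilde N$ via the two projections from $(\SL_n\times_k Y')/\tilde N$; rationality of $Y'/\tilde N$ is then proved over the non-closed field $k$ using Enriques--Manin--Swinnerton-Dyer for degree-$5$ del Pezzo surfaces, Trepalin's identification of the minimal resolution, and the Iskovskikh--Manin theory of minimal geometrically rational surfaces with $K^2\ge 5$ and a rational point. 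To complete your approach you would have to either prove the equivariant/twisted Maeda statement or substitute an argument of this kind.
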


It should be noted that Corollary~\ref{cor:symmetricalternating} in case~(2) with $G=N$ was
first established by
Boughattas and Neftin~\cite{boughattasneftin}, who gave in this way the first example of a non-abelian simple
group~$N$ such that Conjecture~\ref{conj:ct} holds for
any homogeneous space of~$\SL_n$ with geometric stabilisers isomorphic, as abstract
groups, to~$N$.  We provide an alternative proof, based on the special properties of del Pezzo surfaces of degree~$5$.

\begin{proof}[Proof of Corollary~\ref{cor:symmetricalternating}]
If $N=S_2$, the supersolvability of~$G/N$ implies that of~$G$ itself,
and the conclusion results from
Corollary~\ref{cor:genthb}.
If $N=S_m$ with  $m\notin\{2,6\}$, then~$N$ is a complete
group
for which the Noether problem has a positive answer.
In this case,
 Corollary~\ref{cor:almost-complete}~(iii) can be applied:
the variety $\SL_n/\tilde N$ appearing in its statement is stably rational
and therefore satisfies the weak approximation property.

It only remains to treat the case $N=A_5$, which is an almost complete finite group.
We shall prove that
 Corollary~\ref{cor:almost-complete}~(ii) can be applied,
i.e.\ that
 for any finite étale
subgroup scheme~$\tilde N$ of~$\SL_n$ over~$k$ such that $\tilde N(\bar k)$ is isomorphic to~$A_5$,
 the variety $\SL_n/\tilde N$ satisfies the weak approximation property;
in fact, we shall even prove that $\SL_n/\tilde N$ is stably rational.

Let~$Y$ denote the split del Pezzo surface of degree~$5$ over~$k$, i.e.\ the blow-up of~$\P^2_k$ along
four rational points in general position, and fix group isomorphisms
 $\tilde N(\bar k)\simeq A_5$
and $\Aut(Y)\simeq S_5$
(see \cite[Theorem~8.5.8]{dolgachevclassical}).
As $\Aut(A_5)=S_5$, the natural action of~$\Gal(\bar k/k)$ on~$\tilde N(\bar k)$ determines
a homomorphism $\chi:\Gal(\bar k/k)\to S_5$.  
Letting~$S_5$ act on~$A_5$ by conjugation, the twist by~$\chi$
of the constant group scheme over~$k$ associated with~$A_5$ is~$\tilde N$.
Let~$Y'$ denote the twist of~$Y$ by~$\chi$.
As the action of~$A_5$ on~$Y$ is $S_5$\nobreakdash-equivariant, it gives rise, upon twisting,
to an action of~$\tilde N$ on~$Y'$.
Let us now consider the diagonal right action of the group scheme~$\tilde N$ on $\SL_n \times_k Y$ and the two
projections $\mathrm{pr}_1:(\SL_n \times_k Y')/\tilde N \to \SL_n/\tilde N$
and $\mathrm{pr}_2:(\SL_n \times_k Y')/\tilde N \to Y'/\tilde N$.
As the generic fibre of~$\mathrm{pr}_2$ is a torsor under the rational algebraic group~$\SL_n$,
it is itself rational, by Hilbert's Theorem~90.
As the generic fibre of~$\mathrm{pr}_1$ is a del Pezzo surface of degree~$5$,
it is also rational, by the work of Enriques, Manin and Swinnerton-Dyer
 (see \cite{enriques,maninrational,sd-del-pezzo-5}).
The varieties $\SL_n/\tilde N$ and~$Y'/\tilde N$ are therefore stably birationally equivalent.
To conclude the proof, let us check that the surface $Y'/\tilde N$ is rational.
Let $Z \to Y'/\tilde N$ denote its minimal resolution of singularities.
As~$Y'$ is a del Pezzo surface of degree~$5$, the above-cited
work of Enriques, Manin and Swinnerton-Dyer
implies that~$Y'$ is rational and hence that $Z(k)\neq \emptyset$.
On the other hand, according to Trepalin \cite[end of proof of Lemma~4.5]{trepalin}, the smooth projective surface~$Z_{\bar k}$ is isomorphic to the Hirzebruch surface $\P(\sO_{\P^1}\oplus \sO_{\P^1}(3))$ over~$\bar k$. In particular $K_Z^2=8$
and~$Z$ is geometrically minimal, and hence
minimal.  All in all~$Z$ is a minimal smooth projective geometrically rational surface with~$K_Z^2 \geq 5$
and $Z(k)\neq\emptyset$;
by the work of Iskovskikh and Manin,
it follows that it is rational (see \cite[Proposition~4.16]{bwch2xk}), as desired.
\end{proof}

\subsubsection{Inverse Galois problem}
\label{subsec:inversegalois}

In the situation of Theorem~\ref{th:genthb},
let us assume that~$X$ is the quotient of~$L$ by a finite subgroup~$\Gamma \subseteq L(k)$
viewed as a constant group scheme over~$k$, and let~$\bar x$
 be
the image of $1 \in L(k)$, so that $H_{\bar x} = \Gamma$. Then any normal subgroup $N \subseteq \Gamma$ is
stable under the (trivial) outer action of $\Gal(\bar k/k)$,
and 
the supersolvability condition on~$\Gamma/N$ that appears
in the statement of Theorem~\ref{th:genthb}
reduces to the usual notion of supersolvability for abstract groups.
When in addition $L=\SL_n$, the conclusion
of Theorem~\ref{th:genthb} implies a positive answer
to the inverse Galois problem for~$\Gamma$
(see \cite[\textsection4, Proposition~1]{harariquelques})
and, by a theorem of Lucchini Arteche,
to the Grunwald problem outside of the finite places of~$k$ dividing the order of~$\Gamma$
(see \cite[\textsection6]{lucchiniunramifiedbrauer}).  We shall refer to this version
of the Grunwald problem as the \emph{tame} Grunwald problem,
following \cite[\textsection1.2]{dlan}.
Thus, we obtain:

\begin{cor}
\label{cor:inversegalois}
Let~$\Gamma$ be a finite group and $N \subset \Gamma$ be a normal subgroup such that
$\Gamma/N$ is supersolvable.  Let~$k$ be a number field.
Assume that
the set $Y(k)$ is dense
in $Y(k_{\Omega})^{\Br_{\nr}(Y)}$
 for any $n\geq 1$ and any homogeneous space~$Y$ of~$\SL_n$ over~$k$
whose geometric stabilisers are isomorphic, as groups, to~$N$.
Then~$\Gamma$ is a Galois
group over~$k$ and the tame Grunwald problem has a positive solution
  for~$\Gamma$ over~$k$.
\end{cor}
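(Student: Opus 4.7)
The plan is to deduce this corollary by invoking Theorem~\ref{th:genthb} with $L = \SL_n$, followed by two well-known consequences of Conjecture~\ref{conj:ct} for quotients $\SL_n/\Gamma$: Harari's criterion for the inverse Galois problem \cite[\textsection4, Proposition~1]{harariquelques} and Lucchini Arteche's theorem on the tame Grunwald problem \cite[\textsection6]{lucchiniunramifiedbrauer}. The whole argument is essentially a chain of citations; the work has been done in the preceding sections and in the literature.

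First I would fix an integer~$n$ and an embedding $\Gamma \hookrightarrow \SL_n(k)$ (any faithful linear representation, e.g.\ the regular representation, provides one), and consider the variety $X = \SL_n/\Gamma$, where $\Gamma$ is viewed as a constant finite étale subgroup scheme of~$\SL_n$ over~$k$. Letting $\bar x \in X(\bar k)$ be the image of $1 \in \SL_n(k)$, the stabiliser $H_{\bar x}$ is exactly $\Gamma$ (as a constant group scheme over~$k$), and the outer action of $\Gal(\bar k/k)$ on $H_{\bar x}(\bar k) = \Gamma$ induced by~\eqref{eq:hegal} is trivial because the subgroup scheme is constant.

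With these choices, the hypotheses of Theorem~\ref{th:genthb} for the normal subgroup $N \subseteq \Gamma$ are immediately satisfied: condition~(1) holds because the outer action is trivial and hence stabilises any subgroup, while condition~(2) reduces to the classical notion of supersolvability of the abstract group $\Gamma/N$, which is the hypothesis. The homogeneous spaces~$Y$ of~$\SL_n$ that satisfy condition~$(\star)$ all have geometric stabilisers isomorphic to~$N$ as abstract groups, so the density hypothesis of the corollary provides exactly what Theorem~\ref{th:genthb} requires. Hence Conjecture~\ref{conj:ct} holds for $X = \SL_n/\Gamma$.

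Finally, I would invoke the two cited results to conclude: by \cite[\textsection4, Proposition~1]{harariquelques}, the validity of Conjecture~\ref{conj:ct} for $\SL_n/\Gamma$ implies that~$\Gamma$ is a Galois group over~$k$, and by \cite[\textsection6]{lucchiniunramifiedbrauer} it implies that the tame Grunwald problem for~$\Gamma$ over~$k$ admits a positive solution. There is no substantive obstacle here beyond correctly matching the formulations: the only thing to be careful about is that the $Y$ produced by Theorem~\ref{th:genthb} have stabilisers isomorphic to~$N$ \emph{as abstract groups}, rather than compatibly with any Galois-theoretic structure, which is precisely the level of generality assumed in the statement of the corollary.
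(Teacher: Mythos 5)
Your proposal is correct and follows the paper's own argument: the paper likewise takes $X=\SL_n/\Gamma$ with $\Gamma\subset\SL_n(k)$ a constant subgroup scheme and $\bar x$ the image of the identity, observes that the outer Galois action is trivial so that the hypotheses of Theorem~\ref{th:genthb} reduce to abstract supersolvability of $\Gamma/N$ and to the corollary's density assumption (the spaces in condition~$(\star)$ having geometric stabilisers abstractly isomorphic to~$N$, cf.\ Remark~\ref{rmk:weakerstatement}), and then concludes via \cite[\textsection4, Proposition~1]{harariquelques} and Lucchini Arteche's theorem \cite[\textsection6]{lucchiniunramifiedbrauer}. No discrepancies to report.
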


The second assertion of the corollary means that
if~$S$ is a finite set of places of~$k$ none of which divides
the order of~$\Gamma$ and if, for each $v \in S$, a Galois extension $K_v/k_v$
whose Galois group can be embedded into~$\Gamma$
is given,
then
there exists a Galois extension~$K/k$ with Galois group~$\Gamma$ such that
for each $v \in S$, the completion of~$K$ at a place dividing~$v$ is isomorphic,
as a field extension of~$k_v$, to~$K_v$.

When the subgroup~$N$ is assumed to be trivial, the statement of Corollary~\ref{cor:inversegalois}
 recovers
the positive answer to the tame Grunwald problem
for supersolvable finite groups
obtained in
 \cite[Corollaire au théorème~B]{hwzceh}.
Indeed, in this case, Hilbert's Theorem~90
guarantees that $Y\simeq \SL_n$,
so that~$Y$ is rational over~$k$ and satisfies the weak approximation property.

Combining
 Corollary~\ref{cor:symmetricalternating} for $G=N=A_5$
(due to Boughattas and Neftin~\cite{boughattasneftin})
with
Corollary~\ref{cor:inversegalois}
 yields the following case of the
tame Grunwald  problem, which to our knowledge is new:

\begin{cor}
Let $\Gamma = A_5 \rtimes G$ be a semi-direct product of~$A_5$ with a finite supersolvable group~$G$. 
Then~$\Gamma$ is a Galois
group over~$k$ and the tame Grunwald problem has a positive solution for~$\Gamma$ over~$k$.
\end{cor}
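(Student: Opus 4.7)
The plan is to derive this corollary directly from the two preceding results, essentially as a bookkeeping assembly. I would apply Corollary~\ref{cor:inversegalois} to the finite group $\Gamma = A_5 \rtimes G$, taking as the normal subgroup the canonical copy of $A_5$ inside~$\Gamma$. The quotient $\Gamma/A_5$ is isomorphic to~$G$, which is supersolvable by hypothesis, so the supersolvability condition on $\Gamma/N$ required by Corollary~\ref{cor:inversegalois} is satisfied with $N = A_5$.

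To verify the remaining hypothesis of Corollary~\ref{cor:inversegalois}, I need to check that for every $n \geq 1$ and every homogeneous space~$Y$ of~$\SL_n$ over~$k$ whose geometric stabilisers are isomorphic, as abstract groups, to~$A_5$, the set $Y(k)$ is dense in $Y(k_\Omega)^{\Br_{\nr}(Y)}$. Given such a~$Y$, the group $\Gal(\bar k/k)$ acts on its geometric stabilisers via some outer action, endowing $A_5$ with the structure of a group equipped with an outer action of $\Gal(\bar k/k)$. With this outer action in place, I apply Corollary~\ref{cor:symmetricalternating} using the choice $G = N = A_5$ in the notation of that corollary: the quotient $G/N$ is trivial (hence trivially supersolvable), and case~(2) of its hypothesis is satisfied. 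Corollary~\ref{cor:symmetricalternating} then yields Conjecture~\ref{conj:ct} for~$Y$.

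Because Corollary~\ref{cor:symmetricalternating} applies uniformly to~$A_5$ equipped with any outer Galois action, it covers every homogeneous space of~$\SL_n$ whose geometric stabilisers are abstractly isomorphic to~$A_5$, regardless of the Galois structure on them. Feeding this input back into Corollary~\ref{cor:inversegalois} produces both conclusions simultaneously: $\Gamma$ is a Galois group over~$k$, and the tame Grunwald problem has a positive solution for~$\Gamma$ over~$k$.

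No genuine obstacle arises. The only point requiring attention is the mismatch of quantifiers: Corollary~\ref{cor:symmetricalternating} is stated for homogeneous spaces with geometric stabilisers prescribed up to outer Galois action, while Corollary~\ref{cor:inversegalois} requires a statement about stabilisers prescribed only up to abstract isomorphism. This is resolved by noting that the structural hypothesis $N \simeq A_5$ in Corollary~\ref{cor:symmetricalternating}~(2), together with the triviality of $G/N$, imposes no constraint on the outer action, so the statement of Corollary~\ref{cor:symmetricalternating} in fact covers all possible outer actions on~$A_5$ in one go.
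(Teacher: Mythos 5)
Your proposal is correct and follows the paper's own argument: the paper obtains this corollary precisely by combining Corollary~\ref{cor:symmetricalternating} with $G=N=A_5$ (the case due to Boughattas and Neftin) with Corollary~\ref{cor:inversegalois} applied to $N=A_5\trianglelefteq\Gamma$ and supersolvable quotient $\Gamma/N\simeq G$. Your remark resolving the quantifier mismatch --- that Corollary~\ref{cor:symmetricalternating} with trivial quotient covers every outer Galois action on $A_5$, hence every homogeneous space with stabilisers abstractly isomorphic to $A_5$ --- is exactly the right observation and matches the intended reading.
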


A more precise understanding of the unramified Brauer group
of $\SL_n/\Gamma$  leads, for some groups~$\Gamma$, to a positive answer to the Grunwald
problem with a smaller exceptional set
of bad primes than in the conclusion of Corollary~\ref{cor:inversegalois}
(see \cite[4th page]{hwzceh}).
If one does not exclude
any prime, however, the Grunwald problem
can  have a negative answer 
 under the assumptions of Corollary~\ref{cor:inversegalois}, as is well known
(see \cite{wang}).

\subsection{Norms from supersolvable extensions}\label{subsec:norms}

The following
refinement of the inverse Galois problem
was formulated by
Frei, Loughran and Newton~\cite{freiloughrannewton}:
given a number field~$k$,
a finite group~$G$ and
a finitely generated subgroup $\mathcal A \subset k^*$,
does there exist a Galois extension $K/k$
such that $\Gal(K/k)\simeq G$ and $\mathcal A \subset N_{K/k}(K^*)$?
We provide a positive answer when~$G$ is supersolvable.

\begin{thm}
\label{th:normfromsupersolvable}
Let~$k$ be a number field and $\mathcal A \subset k^*$ be a finitely generated subgroup.
Let~$G$ be a supersolvable finite group.
There exists a Galois extension $K/k$ with Galois group isomorphic to~$G$
such that every element of~$\mathcal A$ is a norm from~$K$.
Moreover, given a finite set of places~$S$ of~$k$, one can require that the places of~$S$ split in~$K$.
\end{thm}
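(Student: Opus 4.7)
My plan is to construct a variety $X = Y/G$ whose $k$-rational points encode Galois $G$\nobreakdash-extensions $K/k$ together with elements witnessing $a_1,\ldots,a_r \in N_{K/k}(K^*)$, in such a way that $X$ falls within the scope of Example~\ref{example:quotientlinear}, and then to isolate the desired rational point via Brauer--Manin density combined with Hilbert irreducibility, in the spirit of the proof of Corollary~\ref{cor:inversegalois}.

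Concretely, fix generators $a_1,\ldots,a_r$ of~$\mathcal A$ and a faithful embedding $G \hookrightarrow \SL_n$ (for instance via the regular representation, with $n = |G|$). Let $k' = k^G$ be the split étale $k$\nobreakdash-algebra of rank~$n$ on which $G$ acts by the regular permutation; let $N : R_{k'/k}(\Gm) \to \Gm$ be the norm, whose fibres are torsors under the split torus $T = R_{k'/k}^{(1)}(\Gm)$. Set
\begin{equation*}
L = \SL_n \times T^r, \qquad Y = \SL_n \times \prod_{i=1}^r N^{-1}(a_i),
\end{equation*}
with $L$ acting on $Y$ by left translation on the $\SL_n$\nobreakdash-factor and via $T^r$ on the remaining factors; then $Y$ is a trivial $L$\nobreakdash-torsor with connected (indeed trivial) geometric stabilisers. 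I would let $G$ act on $L$ by conjugation on $\SL_n$ (through $G \hookrightarrow \SL_n$) and by the regular permutation on each $T$\nobreakdash-factor of~$T^r$, and on $Y$ by left multiplication on $\SL_n$ combined with the regular permutation on each $N^{-1}(a_i)$; a direct verification shows that these actions are compatible and that the $G$\nobreakdash-action on $Y$ is free. Example~\ref{example:quotientlinear} then provides Conjecture~\ref{conj:ct} for $X = Y/G$.

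Let $f : Y \to X$ denote the quotient map. A rational point $x \in X(k)$ corresponds to a continuous homomorphism $\rho_x : \Gal(\bar k/k) \to G$ up to conjugation, and when $\rho_x$ is surjective, $K = \bar k^{\ker \rho_x}$ is a Galois extension of $k$ with Galois group~$G$; projecting the fibre $Y_x = f^{-1}(x)$ onto each $N^{-1}(a_i)$\nobreakdash-factor then yields $\xi_i \in K^*$ with $N_{K/k}(\xi_i) = a_i$, hence $\mathcal A \subseteq N_{K/k}(K^*)$. Moreover, a place $v \in \Omega$ splits completely in $K/k$ precisely when $\rho_x$ is trivial on the decomposition group at~$v$, equivalently when $x$ lifts to a point of~$Y(k_v)$. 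Since $\SL_n$ and each $N^{-1}(a_i)$ have obvious $k$\nobreakdash-rational points, for every $v \in S$ I would fix some $y_v \in Y(k_v)$ and set $x_v = f(y_v)$; completing $(x_v)_{v \in S}$ to an adelic point in the image of $f$ and using that $Y$ is $k$\nobreakdash-rational (so $\Br_{\nr}(Y)$ reduces to constants), the functoriality of the Brauer--Manin pairing along the étale map $f$ places this adelic point inside $X(k_\Omega)^{\Br_{\nr}(X)}$. The conclusion of the theorem will then follow by producing $x \in X(k)$ sufficiently close to this adelic point at the places of $S$ and whose fibre $Y_x$ is connected: étaleness of $f$ automatically supplies a lift of $x$ to $Y(k_v)$ for every $v \in S$, so the resulting Galois extension $K/k$ splits at~$S$ and contains $a_1,\ldots,a_r$ in its norm group.

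The main obstacle will be establishing the existence of such an~$x$: one must combine the density of $X(k)$ in $X(k_\Omega)^{\Br_{\nr}(X)}$ furnished by Corollary~\ref{cor:quotientsRC} with the requirement that $x$ avoid the thin subset of $X$ over which $\rho_x$ has non-surjective image. This calls for a Hilbertian strengthening of Theorem~\ref{th:main}, which I expect to extract by inspecting its proof: Theorem~\ref{th:fibration} and Theorem~\ref{th:hararifibration} both already incorporate Hilbert subsets in their conclusions, and this Hilbertian feature should propagate through the cyclic descent of \S\ref{subsec:cyclicdescent} and the supersolvable induction of \S\ref{subsec:general-case}. Alternatively, one can argue directly as in the proof of Corollary~\ref{cor:inversegalois}, invoking the combination of Brauer--Manin density and Hilbert irreducibility that is available for the rationally connected variety~$X$.
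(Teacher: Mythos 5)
Your proposal is correct and follows essentially the same route as the paper: the paper's proof uses exactly this variety $Y=\SL_n\times T^{a_1}\times\dots\times T^{a_r}$ (its $T^{\alpha}$ is your $N^{-1}(\alpha)$ for the split algebra $k^G$), the group $L=\SL_n\times (T^1)^r$, the quotient $X=Y/G$, Corollary~\ref{cor:quotientsRC} via Example~\ref{example:quotientlinear}, and then the Brauer--Manin-compatible form of Hilbert irreducibility (\cite[Lemme~1]{harariquelques}) plus the implicit function theorem to split the places of~$S$ --- no Hilbertian strengthening of Theorem~\ref{th:main} is needed, exactly as in the alternative you mention. The only minor deviations are the normalisation of the $G$-action (the paper acts by right translation on $\SL_n$ and trivially on the $\SL_n$-factor of~$L$, rather than by conjugation and left translation) and the way the auxiliary adelic point is certified to lie in $X(k_\Omega)^{\Br_{\nr}(X)}$: the paper uses a rational point $y_0\in Y(k)$ together with a finite set $S_0$ of places outside of which membership in the Brauer--Manin set is insensitive to the local components, a formulation that also absorbs the auxiliary local conditions of the Ekedahl step (which need not lift to~$Y$, so your functoriality argument alone would not cover them).
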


\begin{proof}
Let us fix
an embedding $G \hookrightarrow \SL_n(k)$ for some~$n\geq 1$
and a finite system of generators $\alpha_1,\dots,\alpha_m$ of~$\mathcal A$.
As in \cite[\textsection A.1]{freiloughrannewton},
we
let $T^\alpha \subset \prod_{g \in G} \Gm$,
for $\alpha \in k^*$,
denote the
subvariety, over~$k$, whose $\bar k$\nobreakdash-points are the maps $t:G \to \bar k^*$
such that $\prod_{g \in G}t(g)=\alpha$.
Let us set $Y = \SL_n \times T^{\alpha_1} \times \dots \times T^{\alpha_m}$
and $L = \SL_n \times T^1 \times \dots \times T^1$
(with~$m$ copies of~$T^1$).
Let~$G$ act
on the right on~$T^\alpha$ (for any~$\alpha$) by $(t\cdot g)(g')=t(gg')$.
Let~$G$ act
on the right on~$Y$ and on~$L$ by the diagonal actions
coming from the action just defined on the~$T^\alpha$,
from the right multiplication action on the copy of~$\SL_n$ appearing in~$Y$,
and from the trivial action on the copy of~$\SL_n$ appearing in~$L$.

Set $X=Y/G$.
The projection $\pi:Y\to X$ is a right torsor under~$G$
since the action of~$G$ on~$\SL_n$
by right multiplication is free.
We view it as a left torsor by setting $g \cdot y = y \cdot g^{-1}$.
Applying
Corollary~\ref{cor:quotientsRC}
as in Example~\ref{example:quotientlinear}
shows that $X(k)$ is a dense subset of $X(k_\Omega)^{\Br_{\nr}(X)}$.
It follows that Ekedahl's version of Hilbert's irreducibility theorem,
in the form spelled out in \cite[Lemme~1]{harariquelques}, can be applied to~$\pi$.
Let us fix $y_0 \in Y(k)$.
Recall that there exists
a  finite subset $S_0 \subset \Omega$
 such that any $(x_v)_{v \in \Omega} \in X(k_\Omega)$
with $x_v=\pi(y_0)$ for all $v \in S_0$
belongs to
$X(k_{\Omega})^{\Br_{\nr}(X)}$
(see \cite[Remarks~2.4 (i)--(ii)]{wittenbergslc}).
Let us fix such an~$S_0$, and an~$S$ as in the statement of Theorem~\ref{th:normfromsupersolvable}.
By \cite[Lemme~1]{harariquelques},
there exists $x \in X(k)$ such that the scheme $\pi^{-1}(x)$ is irreducible,
with~$x$
 arbitrarily close
to $\pi(y_0) \in X(k_v)$ for all $v \in S$.

The function field~$K$ of $\pi^{-1}(x)$
is then a Galois extension of~$k$ with group~$G$,
and the restriction to $\pi^{-1}(x)$ of the
invertible function $(s,t_1,\dots,t_m) \mapsto t_i(1)$ on~$Y$,
where~$1$ denotes the identity element of~$G$,
is an element of~$K^*$ with norm~$\alpha_i$.
Moreover, as~$\pi$ is étale, the implicit function theorem guarantees that if~$x$ is sufficiently close to~$\pi(y_0)$ in~$k_v$ for $v \in S_0$, then $\pi^{-1}(x)$ possesses a $k_v$\nobreakdash-point (close to~$y_0$), so that
 $v$ splits in~$K$.
\end{proof}

\begin{rmks}
(i)
In the particular case where the group~$G$ is abelian,
the existence of Galois extensions~$K/k$
such that $\Gal(K/k)\simeq G$ and $\mathcal A \subset N_{K/k}(K^*)$
was first established
by Frei, Loughran and Newton \cite[Theorem~1.1]{freiloughrannewton},
who gave a quantitative estimate for the number of such field extensions with bounded
conductor. In \emph{op.\ cit.}, Appendix, we offered an alternative
algebro-geometric proof of their result.
A third proof
was later given by Frei and Richard~\cite{freirichard}.

(ii)
In the particular case where the group~$G$ is abelian,
the proof of
 \cite[Appendix]{freiloughrannewton}
is simpler
than the one obtained by specialising the above proof of Theorem~\ref{th:normfromsupersolvable}.
Indeed, the latter chooses a filtration of~$G$
with cyclic quotients and proceeds by induction along this filtration,
while the former consists in one step only.

(iii)
When formulating Theorem~\ref{th:normfromsupersolvable}, one might consider
local conditions at a finite set of places~$S$ more general than the condition
that these places split in~$K$;
in
 the abelian case,
this was done in \cite[Corollary~4.12]{freiloughrannewton}, \cite[\textsection2.4]{freirichard}.
Even when a
 Galois extension of~$k$ with group~$G$
that satisfies the given local conditions is assumed to exist,
arbitrary local conditions
cannot always be satisfied when the group~$G$
is a non-abelian finite supersolvable group---see~\cite[Proposition~A.9]{freiloughrannewton} for an example.  
According to the proof of Theorem~\ref{th:normfromsupersolvable},
this phenomenon is fully controlled by the Brauer--Manin obstruction to weak approximation on the
variety~$X$ that appears in this proof.
In particular, there always exists a finite subset $T \subset \Omega$
such that arbitrary local conditions can be imposed at the places of~$S$ as soon as $S\cap T=\emptyset$.
To identify~$T$ explicitly, however, it would be necessary to analyse further the group $\Br_{\nr}(X)$.
\end{rmks}

The ideas underlying the proof of
Theorem~\ref{th:normfromsupersolvable} can be applied
to other similar problems.
To conclude the article, we explain a slightly more general framework and give one example.

We fix a number field~$k$, a finite group~$G$ and a subgroup $H \subseteq G$
such that the only normal subgroup of~$G$ contained in~$H$ is the trivial subgroup,
and we let~$G$
act on the polynomial ring $k\big[(x_{\gamma})_{\gamma \in G/H}\big]$ by permuting the variables via
$g(x_\gamma)=x_{g\gamma}$.
We also fix a non-constant invariant polynomial
 $\theta \in k\big[(x_{\gamma})_{\gamma \in G/H}\big]^G$.

Let us consider a $k$\nobreakdash-algebra~$\tilde K$ endowed with an action of~$G$ that turns the morphism
$\Spec(\tilde K) \to \Spec(k)$ into a $G$\nobreakdash-torsor.  Let $K = \tilde K^H$.
When~$\tilde K$ is a field, this amounts to specifying a field extension~$K/k$, with Galois
closure~$\tilde K/k$, together with a group isomorphism $p:G \isoto \Gal(\tilde K/k)$
such that $p(H)=\Gal(\tilde K/K)$.

For $z \in K$ and $\gamma \in G/H$, if $\tilde\gamma \in G$ stands for a lift of~$\gamma$, the
element $\tilde\gamma(z)\in\tilde K$
does
not depend on the choice of~$\tilde\gamma$. We denote it by $\gamma(z)$.
For $z \in K$,
substituting~$\gamma(z)$ for $x_\gamma$ in~$\theta$ yields an element of~$\tilde K$
that is invariant under~$G$ and hence belongs to~$k$.  We denote it by $N_{\theta}(z)$.
This defines a map $N_{\theta}:K\to k$.  For example, if $\theta=\prod_{\gamma} x_\gamma$
(resp.\ $\theta=\sum_{\gamma} x_\gamma$), we recover the norm (resp.\ trace) map from~$K$ to~$k$.

Given a finite subset $\mathcal A \subset k$, one can now ask: do there exist a field extension $K/k$,
a Galois closure~$\tilde K/k$ and
 an isomorphism~$p$ as above, such that $\mathcal A \subset N_{\theta}(K)$?  When $H$ is trivial
and $\theta=\prod_\gamma x_\gamma$, this is exactly the question considered in
\cite{freiloughrannewton,freirichard}
and in Theorem~\ref{th:normfromsupersolvable}.  The proof of Theorem~\ref{th:normfromsupersolvable} extends
to a positive answer to this more general question under some assumptions on~$G$, $H$ and~$\theta$,
which we present in Theorem~\ref{th:normfromsupersolvablegen} below.
To prepare for its statement, let us introduce, for $\alpha \in k$, the affine variety
$V^\alpha=\Spec\big(
k\big[(x_{\gamma})_{\gamma \in G/H}\big] / (\theta-\alpha)
\big)$.
As~$\theta$ is invariant under~$G$, the group~$G$ naturally acts on~$V^\alpha$.
We set  $V= \prod_{\alpha\in\mathcal A}V^\alpha$
and equip this product with the diagonal action of~$G$.

\begin{thm}
\label{th:normfromsupersolvablegen}
Let~$k$ be a number field.
Let~$G$ be a supersolvable finite group and $H \subseteq G$ be a subgroup such that 
 the only normal subgroup of~$G$ contained in~$H$ is the trivial subgroup.
Let
 $\theta \in k\big[(x_{\gamma})_{\gamma \in G/H}\big]^G$
be a non-constant invariant polynomial.
 Let $\mathcal A \subset k$ be a finite subset.
Let~$V$ be the variety associated with $H$, $G$, $\theta$, $\mathcal A$ as above.
Let $\Spec(\tilde K_0)\to \Spec(k)$ be a $G$\nobreakdash-torsor.
Assume that the following conditions are satisfied:
\begin{enumerate}
\item letting $K_0=\tilde K_0^H$, the inclusion
 $\mathcal A \subset N_\theta(K_0)$ holds;
\item the variety~$V$ is smooth and rationally connected;
\item for every $[\sigma] \in H^1(k,G)$,
 Conjecture~\ref{conj:ct} holds for the twisted variety~$V^\sigma$.
\end{enumerate}
Then there exist a field extension~$K/k$, with Galois closure $\tilde K/k$,
and a group isomorphism $p:G \isoto \Gal(\tilde K/k)$, such that $p(H)=\Gal(\tilde K/K)$
and $\mathcal A \subset N_\theta(K)$.
If moreover a finite set of places~$S$ of~$k$ is given, one can require that
for all $v \in S$,
 the $k_v$\nobreakdash-algebras
 $\tilde K \otimes_k k_v$ and $\tilde K_0 \otimes_k k_v$ are $G$\nobreakdash-equivariantly isomorphic
(so that
 the $k_v$\nobreakdash-algebras
 $K \otimes_k k_v$ and $K_0 \otimes_k k_v$ are isomorphic).
\end{thm}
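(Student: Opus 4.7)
The plan is to imitate the proof of Theorem~\ref{th:normfromsupersolvable}, adapting it to the general invariant polynomial~$\theta$. Fix an embedding $G \hookrightarrow \SL_n(k)$ for some $n \geq 1$, form $Y := \SL_n \times_k V$, and equip it with the diagonal right action of~$G$: right multiplication on the $\SL_n$ factor and the given action on~$V$. Because the action on~$\SL_n$ is free, the action on~$Y$ is free and $\pi \colon Y \to X := Y/G$ is a $G$\nobreakdash-torsor, which I view as a left torsor via $g\cdot y := y\cdot g^{-1}$. The variety~$Y$ is smooth and, by hypothesis~(2) combined with the rationality of~$\SL_n$, rationally connected.

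First I verify that $X(k)$ is dense in $X(k_\Omega)^{\Br_{\nr}(X)}$ by means of Corollary~\ref{cor:quotientsRC}. Since the action is diagonal, $Y^\sigma \cong \SL_n^\sigma \times V^\sigma$ for every $[\sigma] \in H^1(k,G)$; Hilbert's Theorem~90 in the form $H^1(k,\SL_n) = 0$ gives $\SL_n^\sigma \cong \SL_n$; and hypothesis~(3) combined with a standard fibration argument (using that $\SL_n$ is $k$\nobreakdash-rational and satisfies $\Br(\SL_n) = \Br(k)$) upgrades Conjecture~\ref{conj:ct} for $V^\sigma$ to Conjecture~\ref{conj:ct} for $\SL_n \times V^\sigma$. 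The hypotheses of Corollary~\ref{cor:quotientsRC} are thus satisfied.

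Next I use hypothesis~(1) to produce a base point whose fiber under~$\pi$ encodes the prescribed local data. Let~$\sigma_0$ be a cocycle representing the $G$\nobreakdash-torsor $\Spec(\tilde K_0)$. Hilbert~90 provides a $k$\nobreakdash-point of $\SL_n^{\sigma_0}$, while the identity $N_\theta(z_\alpha) = \alpha$ (for some $z_\alpha \in K_0$, which exists by hypothesis~(1)) translates into a $G$\nobreakdash-equivariant morphism $\Spec(\tilde K_0) \to V^\alpha$, $\gamma \mapsto \gamma(z_\alpha)$, i.e.\ into a $k$\nobreakdash-point of $V^{\sigma_0}$. Combining these, I obtain $y_0 \in Y^{\sigma_0}(k)$ and set $x_0 := \pi^{\sigma_0}(y_0) \in X(k)$; with the standard sign convention for contracted products, the fiber $\pi^{-1}(x_0)$ of the \emph{untwisted} torsor~$\pi$ over~$x_0$ is then $G$\nobreakdash-equivariantly isomorphic to $\Spec(\tilde K_0)$.

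The endgame proceeds as in the proof of Theorem~\ref{th:normfromsupersolvable}: \cite[Remarks~2.4~(i)--(ii)]{wittenbergslc} supplies a finite $S_0 \supseteq S$ of places such that any adelic point of~$X$ agreeing with~$x_0$ at each $v \in S_0$ lies in $X(k_\Omega)^{\Br_{\nr}(X)}$, and Ekedahl's Hilbert irreducibility theorem as formulated in \cite[Lemme~1]{harariquelques} yields $x \in X(k)$ with $\pi^{-1}(x)$ irreducible and arbitrarily close to~$x_0$ at every $v \in S_0$. Writing $\pi^{-1}(x) = \Spec(\tilde K)$, irreducibility makes~$\tilde K$ a $G$\nobreakdash-Galois field extension of~$k$; set $K := \tilde K^H$. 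For each $\alpha \in \mathcal A$, restricting the projection $Y \to V^\alpha$ to $\pi^{-1}(x)$ yields a tuple $(x_\gamma)_{\gamma \in G/H} \in \tilde K^{G/H}$ satisfying $\theta((x_\gamma)) = \alpha$, and $G$\nobreakdash-equivariance forces $x_\gamma = \gamma(x_H)$ with $x_H \in K$, whence $N_\theta(x_H) = \alpha$. For $v \in S$, the $v$\nobreakdash-adic proximity of~$x$ to~$x_0$ and the étaleness of~$\pi$ force an isomorphism $\pi^{-1}(x) \otimes_k k_v \cong \pi^{-1}(x_0) \otimes_k k_v \cong \tilde K_0 \otimes_k k_v$ of $G$\nobreakdash-algebras. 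The delicate step is the third paragraph: fixing the sign conventions for the contracted product so that $\pi^{-1}(x_0)$ is isomorphic to $\Spec(\tilde K_0)$ itself, and not to some inner twist; everything else is bookkeeping around the template of Theorem~\ref{th:normfromsupersolvable}.
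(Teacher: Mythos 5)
Your proposal is correct and follows essentially the same route as the paper's proof: the same $Y=\SL_n\times V$ with diagonal $G$\nobreakdash-action and $X=Y/G$, Corollary~\ref{cor:quotientsRC} applied via Hilbert's Theorem~90 and the easy product argument for $Y^\sigma\simeq\SL_n\times V^\sigma$, hypothesis~(1) converted (exactly as in Remark~\ref{rmks:vsigmatwist}~(i)) into a rational point of the twist whose image $x_0\in X(k)$ has fibre $\Spec(\tilde K_0)$, and the Ekedahl--Harari irreducibility argument with local constancy of the fibre class at the places of~$S$. The only cosmetic differences are that the paper pins down the fibre over~$x_0$ through an explicit cartesian square over $\SL_n/G$ together with \cite[Chapitre~I, \S5.4, Corollaire~1]{serrecg}, and cites \cite[Lemma~4.6]{harskononab} for the local comparison, where you invoke the twisting formalism and étaleness directly.
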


\begin{rmks}
\label{rmks:vsigmatwist}
(i)
If $[\sigma_0] \in H^1(k,G)$ denotes the class of the torsor $\Spec(\tilde K_0)\to\Spec(k)$,
the twisted variety $V^{\sigma_0}$ admits a rational point if and only if $\mathcal A \subset N_{\theta}(K_0)$.
To explain why, we first note that
the twist of the affine space
$\Spec\big(k\big[(x_{\gamma})_{\gamma \in G/H}\big]\big)$
by~$\sigma_0$ can be identified with the Weil restriction
$R_{K_0/k}\A^1_{K_0}$.  As the regular function $\theta$ on this affine space
is invariant under~$G$, it induces a regular function
on its twist. We view it as a morphism $N_\theta:R_{K_0/k}\A^1_{K_0} \to \A^1_k$.
For $\alpha \in \mathcal A$, the twist of~$V^\alpha$ by~$\sigma_0$
is then the fibre $N_\theta^{-1}(\alpha)$.  The latter possesses a rational point if and only
if $\alpha \in N_\theta(K_0)$; hence the claim.

(ii)
The group~$G$ acts faithfully, and therefore generically freely, on~$V$.
Indeed~$G$ acts faithfully on~$G/H$ by our assumption on~$H$,
hence it also acts faithfully on
$k\big[(x_{\gamma})_{\gamma \in G/H}\big]$, while~$\theta-\alpha$ is not
a scalar multiple of $x_{\gamma_1}-x_{\gamma_2}$ for any $\gamma_1,\gamma_2\in G/H$.

(iii)
Let $V'$ be the largest open subset of~$V$ on which~$G$ acts freely.
If $(V'/G)(k)\neq\emptyset$, then
 a $G$\nobreakdash-torsor $\Spec(\tilde K_0) \to \Spec(k)$ satisfying assumption~(1)
of Theorem~\ref{th:normfromsupersolvablegen} exists.
Indeed, for $c \in (V'/G)(k)$, twisting the $G$\nobreakdash-torsor $V' \to V'/G$
by its fibre 
 $\Spec(\tilde K_0) \to \Spec(k)$
above~$c$
yields
a $G$\nobreakdash-torsor $(V')^{\sigma_0} \to V'/G$
whose total space admits rational points (namely, rational points above~$c$).
Assumption~(1) then holds by Remark~\ref{rmks:vsigmatwist}~(i).
\end{rmks}

\begin{proof}[Proof of Theorem~\ref{th:normfromsupersolvablegen}]
We adapt the proof of Theorem~\ref{th:normfromsupersolvable} as follows.
Fix
an embedding $G \hookrightarrow \SL_n(k)$ for some~$n\geq 1$.
Set $Y=\SL_n \times V$.
Let~$G$ act by right multiplication on~$\SL_n$, by the given action on~$V$,
and  diagonally on~$Y$.  Set $X=Y/G$. Let $\tilde\pi:Y\to X$
and $\pi:Y/H\to X$ denote the quotient maps.

\begin{lem}
There exists $x_0 \in X(k)$ such that $\tilde \pi^{-1}(x_0)$
and $\Spec(\tilde K_0)$
are $G$\nobreakdash-equivariantly
isomorphic  over~$k$.
\end{lem}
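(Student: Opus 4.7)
The plan is to realise~$x_0$ as the image of a rational point on an appropriate twist of the torsor $\tilde\pi:Y\to X$.  Let $[\sigma_0]\in H^1(k,G)$ denote the cohomology class represented by the given $G$\nobreakdash-torsor $\Spec(\tilde K_0)\to\Spec(k)$, and form the twisted $G$\nobreakdash-torsor $\tilde\pi^{\sigma_0}:Y^{\sigma_0}\to X$ in the sense of \cite[Lemma~2.2.3]{skobook} (the framework already used in Proposition~\ref{prop:nonabcoh}~(ii)).  By the standard description of the twist, any $k$\nobreakdash-point of~$Y^{\sigma_0}$ projects to a $k$\nobreakdash-point $x_0\in X(k)$ whose fibre $\tilde\pi^{-1}(x_0)$, viewed as a $G$\nobreakdash-torsor over~$k$, has cohomology class $[\sigma_0]$, hence is $G$\nobreakdash-equivariantly isomorphic to $\Spec(\tilde K_0)$.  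It therefore suffices to show that $Y^{\sigma_0}(k)\neq\emptyset$.

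The key observation is that, because $G$ acts diagonally on $Y=\SL_n\times V$ and $V=\prod_{\alpha\in\mathcal A}V^\alpha$ is itself equipped with the diagonal $G$\nobreakdash-action, the twist by~$\sigma_0$ distributes over all these products:
\begin{align*}
Y^{\sigma_0}\simeq\SL_n^{\sigma_0}\times V^{\sigma_0}\simeq\SL_n^{\sigma_0}\times\prod_{\alpha\in\mathcal A}(V^\alpha)^{\sigma_0},
\end{align*}
as $k$\nobreakdash-varieties.  This is a direct unwinding of the definition of the twist as a modification of the Galois action on $\bar k$\nobreakdash-points, using the fact that a diagonal action is twisted componentwise.

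It remains to show that each factor admits a rational point.  For $\SL_n^{\sigma_0}$, composing $\sigma_0$ with the embedding $G\hookrightarrow\SL_n(k)$ gives a cocycle with values in~$\SL_n$ whose class lies in $H^1(k,\SL_n)$, which vanishes by Hilbert's Theorem~90; hence $\SL_n^{\sigma_0}$ is a trivial $\SL_n$\nobreakdash-torsor and in particular has a $k$\nobreakdash-point.  For the factor~$V^{\sigma_0}$, Remark~\ref{rmks:vsigmatwist}~(i) identifies $(V^\alpha)^{\sigma_0}(k)\neq\emptyset$ with the condition $\alpha\in N_\theta(K_0)$; the conjunction over $\alpha\in\mathcal A$ is precisely hypothesis~(1) of Theorem~\ref{th:normfromsupersolvablegen}, so $V^{\sigma_0}(k)\neq\emptyset$ as well.

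The argument is essentially formal once one has set up the twist correctly, and I do not expect any serious obstacle.  The only point that genuinely requires care is the sign convention: one must check that twisting by $[\sigma_0]$, rather than by $[\sigma_0]^{-1}$, is indeed the operation that produces, as fibres over rational points, the torsors of class~$[\sigma_0]$ in $H^1(k,G)$.  This is a routine verification against the conventions of \cite[Lemma~2.2.3]{skobook}, already used elsewhere in the paper.
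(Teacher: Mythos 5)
Your proposal is correct, but it is packaged differently from the paper's argument. You twist the torsor $\tilde\pi:Y\to X$ itself by $\sigma_0$, invoke the standard partition of $X(k)$ according to the class of the fibre (so that rational points of $Y^{\sigma_0}$ map to points $x_0$ with $[\tilde\pi^{-1}(x_0)]=[\sigma_0]$), and then reduce to $Y^{\sigma_0}(k)\neq\emptyset$ via the componentwise decomposition $Y^{\sigma_0}\simeq \SL_n^{\sigma_0}\times\prod_{\alpha\in\mathcal A}(V^\alpha)^{\sigma_0}$, settling the first factor by Hilbert's Theorem~90 and the second by Remark~\ref{rmks:vsigmatwist}~(i) together with assumption~(1). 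The paper instead never twists $Y$: it uses the cartesian square relating $\tilde\pi$ to the quotient map $\tilde\rho:\SL_n\to\SL_n/G$, chooses (again by Hilbert~90, in the form of the surjectivity of $(\SL_n/G)(k)\to H^1(k,G)$) a point $b\in(\SL_n/G)(k)$ with $\tilde\rho^{-1}(b)\simeq\Spec(\tilde K_0)$ as a $G$\nobreakdash-torsor, identifies the fibre of $X\to\SL_n/G$ over~$b$ with the twist $V^{\sigma_0}$, which has a rational point~$x_0$ by assumption~(1) and Remark~\ref{rmks:vsigmatwist}~(i), and then reads off $\tilde\pi^{-1}(x_0)=\tilde\rho^{-1}(b)$ directly from the cartesian square. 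The two arguments use exactly the same inputs (triviality of $H^1(k,\SL_n)$ and assumption~(1)); what the paper's route buys is that the $G$\nobreakdash-equivariant identification of the fibre comes for free from the cartesian square, with no need for the fibre-class/twisting formalism and hence no cocycle sign convention to verify, which is precisely the one point of care your version leaves to a routine check. Your route, in turn, is somewhat more self-contained in that it stays entirely over $X$ and uses only the descent machinery already set up earlier in the paper.
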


\begin{proof}
Let us consider the cartesian square
\begin{align*}
\xymatrix@C=3em{
Y \ar[r]^(.46){\tilde \pi} \ar[d]_{\pr_1} & X \ar[d]^{\pr_1/G} \\
\SL_n \ar[r]^(.46){\tilde\rho} & \SL_n/G\rlap.
}
\end{align*}
As the set $H^1(k,\SL_n)$ is a singleton (Hilbert's Theorem~90),
there exists $b \in (\SL_n/G)(k)$
such that $\tilde\rho^{-1}(b)$ is $G$\nobreakdash-equivariantly isomorphic to $\Spec(\tilde K_0)$
(apply \cite[Chapitre~I, \textsection5.4, Corollaire~1]{serrecg} to the inclusion $G \hookrightarrow \SL_n(\bar k)$).  The fibre $(\pr_1/G)^{-1}(b)$ is then isomorphic to the twist of~$V$ by the torsor
$\Spec(\tilde K_0)\to \Spec(k)$.
By Remark~\ref{rmks:vsigmatwist}~(i), we deduce from this and from our assumption~(1) that
 $(\pr_1/G)^{-1}(b)$ possesses a rational point, say~$x_0$.
As $\tilde\pi^{-1}(x_0)=\tilde\rho^{-1}(b)$, the lemma is proved.
\end{proof}

Assumptions~(2) and~(3) allow us to deduce from
Corollary~\ref{cor:quotientsRC}
that $X(k)$ is a dense subset of $X(k_\Omega)^{\Br_{\nr}(X)}$.
Therefore there exists
 $x \in X(k)$ such that the scheme $\tilde \pi^{-1}(x)$ is irreducible,
with~$x$
 arbitrarily close
to $x_0 \in X(k_v)$ for all $v \in S$
(see \cite[Lemme~1]{harariquelques}).
Let~$\tilde K$ and~$K$  denote the function fields
of $\tilde\pi^{-1}(x)$  and $\pi^{-1}(x)$,
respectively.
The field extension~$\tilde K/k$ is Galois with group~$G$,
and we have $\Gal(\tilde K/K)=H$ by construction.
By choosing~$x$ sufficiently close to~$x_0$ for $v \in S$, we can ensure that
for all $v \in S$,
 the $k_v$\nobreakdash-algebras
 $\tilde K \otimes_k k_v$ and $\tilde K_0 \otimes_k k_v$ are $G$\nobreakdash-equivariantly isomorphic
(see \cite[Lemma~4.6]{harskononab}).
It remains to check that 
 $\mathcal A \subset N_{\theta}(K_0)$.
For $\alpha \in \mathcal A$, composing the projection map $Y \to V^\alpha$
with the regular function on~$V^\alpha$ given by $x_H$ (where~$H$ denotes the canonical point of $G/H$)
yields a regular function on~$Y$ that is invariant under~$H$, hence descends to a regular function on~$Y/H$.
Its restriction $z\in K$ to~$\pi^{-1}(x)$ satisfies $N_{\theta}(z)=\alpha$, as desired.
\end{proof}

\begin{example}
\label{ex:th416}
When
$\theta=\prod_\gamma x_\gamma$
and $\mathcal A \subset k^*$,
the varieties~$V^\alpha$ are trivial torsors under trivial tori,
so that~$V$ is rational over~$k$ and assumptions~(1) and~(2) of
 Theorem~\ref{th:normfromsupersolvablegen} both hold,
in view of 
 Remark~\ref{rmks:vsigmatwist}~(i),
if one takes for $\Spec(\tilde K_0)\to \Spec(k)$ the trivial torsor.
Assumption~(3) holds as well, as the twisted varieties $(V^\alpha)^\sigma$
are torsors under tori
 (see Example~\ref{example:quotientlinear}).
When in addition~$H$ is the trivial subgroup,
this recovers
Theorem~\ref{th:normfromsupersolvable}.
\end{example}

\begin{example}
\label{ex:cubicnoncyclic}
Let~$k$ be a number field and $\alpha \in k^*$.
Then there exists a cubic
 extension $K/k$
such that the equation $\alpha=\mathrm{Tr}_{K/k}(\beta^2)$ has a solution $\beta \in K$.

To see this, one
applies
Theorem~\ref{th:normfromsupersolvablegen} to the symmetric group $G=S_3$,
to the subgroup~$H$ generated by
a transposition,
 to $\theta=\sum_\gamma x_\gamma^2$ and to $\mathcal A=\{\alpha\}$.
Let us check that its hypotheses hold.
To this end, we identify $G/H$ with $\{1,2,3\}$,
so that~$V^\alpha$ is the smooth affine quadric surface defined by the equation
$x_1^2+x_2^2+x_3^2=\alpha$.
The twisted varieties
$(V^\alpha)^\sigma \subset (\A^3_k)^\sigma$ are also smooth affine quadric surfaces, since $(\A^3_k)^\sigma \simeq \A^3_k$ (Hilbert's Theorem~90).
Smooth quadric surfaces are rationally connected and satisfy the
weak approximation property, hence assumptions~(2) and~(3) of Theorem~\ref{th:normfromsupersolvablegen}
are satisfied.
To verify the existence of a torsor $\Spec(\tilde K_0)\to \Spec(k)$ satisfying~(1), we consider
the point $(x_1,x_2,x_3)=(0,\sqrt{\alpha/2},-\sqrt{\alpha/2})$.
In the notation of Remark~\ref{rmks:vsigmatwist}~(iii),
this point belongs to $V'(\bar k) \subset V^{\alpha}(\bar k)$
since its coordinates are pairwise distinct. As its orbit under $\Gal(\bar k/k)$ is contained in its orbit
under~$G$, we have $(V'/G)(k)\neq\emptyset$: Remark~\ref{rmks:vsigmatwist}~(iii) can be applied.
\end{example}

\begin{rmk}
We note that the cubic extensions constructed in Example~\ref{ex:cubicnoncyclic} are non-cyclic.
A cyclic cubic extension~$K/k$ such that the equation $\alpha=\Tr_{K/k}(\beta^2)$ has a solution $\beta\in K$
need not exist: for instance, it cannot exist if~$\alpha$ is not totally positive.
This is a situation where Theorem~\ref{th:normfromsupersolvablegen} does not apply because there
is no torsor $\Spec(\tilde K_0)\to \Spec(k)$ satisfying its assumption~(1)
(taking $G=\Z/3\Z$ and letting~$H$ be trivial).
Similarly, even in the non-cyclic case, it is not always possible to ensure
that the places of a finite set $S\subset \Omega$ split completely in~$K$:
for instance, this cannot be achieved if~$S$ contains a real place
at which~$\alpha$ is negative.
Here $(V'/G)(k)\neq\emptyset$, as shown in Example~\ref{ex:cubicnoncyclic}, but $V(k)=\emptyset$.
These two observations exhibit a marked
contrast with the situation considered in Theorem~\ref{th:normfromsupersolvable}
and in Example~\ref{ex:th416}, where $V(k)\neq\emptyset$ automatically.
\end{rmk}

\bibliographystyle{amsalpha}
\bibliography{supersolvable}

\providecommand{\bysame}{\leavevmode\hbox to3em{\hrulefill}\thinspace}
\providecommand{\MR}{\relax\ifhmode\unskip\space\fi MR }
\providecommand{\MRhref}[2]{%
  \href{http://www.ams.org/mathscinet-getitem?mr=#1}{#2}
}
\providecommand{\href}[2]{#2}
\begin{thebibliography}{CTSSD87b}

\bibitem[BCTS08]{boctsko}
M.~Borovoi, J.-L. Colliot-Th\'{e}l\`ene, and A.~N. Skorobogatov, \emph{The
  elementary obstruction and homogeneous spaces}, Duke Math. J. \textbf{141}
  (2008), no.~2, 321--364.

\bibitem[BM17]{browningmatthiesen}
T.~D. Browning and L.~Matthiesen, \emph{Norm forms for arbitrary number fields
  as products of linear polynomials}, Ann. Sci. \'{E}c. Norm. Sup\'{e}r. (4)
  \textbf{50} (2017), no.~6, 1383--1446.

\bibitem[BMS14]{bms}
T.~D. Browning, L.~Matthiesen, and A.~N. Skorobogatov, \emph{Rational points on
  pencils of conics and quadrics with many degenerate fibers}, Ann. of Math.
  (2) \textbf{180} (2014), no.~1, 381--402.

\bibitem[BN23]{boughattasneftin}
E.~Boughattas and D.~Neftin, \emph{The sufficiency of the {B}rauer-{M}anin
  obstruction to {G}runwald problems}, in preparation, 2023.

\bibitem[{Bor}70]{borelproperties}
A.~{Borel}, \emph{{Properties and linear representations of Chevalley groups}},
  {Seminar on algebraic groups and related finite groups, Princeton 1968/69,
  Lecture Notes in Mathematics, vol.~131, Springer}, 1970, pp.~A1--A55.

\bibitem[Bor93]{borovoiab}
M.~V. Borovoi, \emph{Abelianization of the second nonabelian {G}alois
  cohomology}, Duke Math. J. \textbf{72} (1993), no.~1, 217--239.

\bibitem[Bor96]{borovoi}
M.~Borovoi, \emph{The {B}rauer-{M}anin obstructions for homogeneous spaces with
  connected or abelian stabilizer}, J.~reine angew. Math. \textbf{473} (1996),
  181--194.

\bibitem[BW19]{bwch2xk}
O.~{Benoist} and O.~{Wittenberg}, \emph{{Intermediate Jacobians and rationality
  over arbitrary fields}},
  \href{https://arxiv.org/abs/1909.12668}{arXiv:1909.12668}, to appear, Annales
  ENS, 2019.

\bibitem[CDX19]{cdx}
Y.~Cao, C.~Demarche, and F.~Xu, \emph{Comparing descent obstruction and
  {Brauer}-{Manin} obstruction for open varieties}, Trans. Am. Math. Soc.
  \textbf{371} (2019), no.~12, 8625--8650.

\bibitem[CT03]{ctbudapest}
J.-L. Colliot-Th{\'e}l{\`e}ne, \emph{Points rationnels sur les fibrations},
  Higher dimensional varieties and rational points (Budapest, 2001), Bolyai
  Soc. Math. Stud., vol.~12, Springer, Berlin, 2003, pp.~171\nobreakdash--221.

\bibitem[CTHS03]{cthasko}
J.-L. Colliot-Th{\'e}l{\`e}ne, D.~Harari, and A.~N. Skorobogatov, \emph{Valeurs
  d'un polyn\^ome \`a une variable repr\'esent\'ees par une norme}, Number
  theory and algebraic geometry, London Math. Soc. Lecture Note Ser., vol. 303,
  Cambridge Univ. Press, Cambridge, 2003, pp.~69--89.

\bibitem[CTS87]{ctsandescent2}
J.-L. Colliot-Th{\'e}l{\`e}ne and J.-J. Sansuc, \emph{La descente sur les
  vari\'et\'es rationnelles. {II}}, Duke Math. J. \textbf{54} (1987), no.~2,
  375--492.

\bibitem[CTS21]{ctskobook}
J.-L. Colliot-Th{\'e}l{\`e}ne and A.~N. Skorobogatov, \emph{The
  {Brauer}-{Grothendieck} group}, Ergeb. Math. Grenzgeb., 3. Folge, vol.~71,
  Cham: Springer, 2021.

\bibitem[CTSSD87a]{cssI}
J.-L. Colliot-Th{\'e}l{\`e}ne, J.-J. Sansuc, and P.~Swinnerton-Dyer,
  \emph{Intersections of two quadrics and {C}h\^atelet surfaces. {I}}, J.~reine
  angew. Math. \textbf{373} (1987), 37--107.

\bibitem[CTSSD87b]{cssII}
\bysame, \emph{Intersections of two quadrics and {C}h\^{a}telet surfaces.
  {II}}, J. reine angew. Math. \textbf{374} (1987), 72--168.

\bibitem[DLA19]{demarchelucchinireduction}
C.~Demarche and G.~Lucchini~Arteche, \emph{Le principe de {H}asse pour les
  espaces homog\`enes: r\'{e}duction au cas des stabilisateurs finis},
  Compositio Math. \textbf{155} (2019), no.~8, 1568--1593.

\bibitem[DLAN17]{dlan}
C.~Demarche, G.~Lucchini~Arteche, and D.~Neftin, \emph{The {G}runwald problem
  and approximation properties for homogeneous spaces}, Ann. Inst. Fourier
  (Grenoble) \textbf{67} (2017), no.~3, 1009--1033.

\bibitem[{Dol}12]{dolgachevclassical}
I.~V. {Dolgachev}, \emph{{Classical algebraic geometry: a modern view}},
  Cambridge University Press, 2012.

\bibitem[DSW15]{derenthalsmeetswei}
U.~Derenthal, A.~Smeets, and D.~Wei, \emph{Universal torsors and values of
  quadratic polynomials represented by norms}, Math. Ann. \textbf{361} (2015),
  no.~3-4, 1021--1042.

\bibitem[{Enr}97]{enriques}
F.~{Enriques}, \emph{{Sulle irrazionalit\`a da cui pu\`o farsi dipendere la
  risoluzione d'un'equazione algebrica \(f(x,y,z)=0\) con funzioni razionali di
  due parametri.}}, {Math. Ann.} \textbf{49} (1897), 1--23.

\bibitem[FLN22]{freiloughrannewton}
C.~Frei, D.~Loughran, and R.~Newton, \emph{Number fields with prescribed norms
  (with an appendix by {Y.} {Harpaz} and {O.} {Wittenberg})}, Comment. Math.
  Helv. \textbf{97} (2022), no.~1, 133--181.

\bibitem[FR22]{freirichard}
C.~Frei and R.~Richard, \emph{Constructing abelian extensions with prescribed
  norms}, Math. Comput. \textbf{91} (2022), no.~333, 381--399.

\bibitem[GHS03]{ghs}
T.~Graber, J.~Harris, and J.~Starr, \emph{Families of rationally connected
  varieties}, J. Amer. Math. Soc. \textbf{16} (2003), no.~1, 57--67.

\bibitem[Har94]{harariduke}
D.~Harari, \emph{M\'ethode des fibrations et obstruction de {M}anin}, {Duke
  Math. J.} \textbf{75} (1994), no.~1, 221--260.

\bibitem[Har96]{hartranscendant}
\bysame, \emph{Obstructions de {M}anin transcendantes}, Number theory ({P}aris,
  1993--1994), London Math. Soc. Lecture Note Ser., vol. 235, Cambridge Univ.
  Press, Cambridge, 1996, pp.~75--87.

\bibitem[Har97]{hararifleches}
\bysame, \emph{Fl\`eches de sp\'ecialisations en cohomologie \'etale et
  applications arithm\'etiques}, Bull. Soc. Math. France \textbf{125} (1997),
  no.~2, 143--166.

\bibitem[Har07]{harariquelques}
\bysame, \emph{Quelques propri\'et\'es d'approximation reli\'ees \`a la
  cohomologie galoisienne d'un groupe alg\'ebrique fini}, Bull. Soc. Math.
  France \textbf{135} (2007), no.~4, 549--564.

\bibitem[HBS02]{heathbrownskorobogatov}
R.~Heath-Brown and A.~Skorobogatov, \emph{Rational solutions of certain
  equations involving norms}, Acta Math. \textbf{189} (2002), no.~2, 161--177.

\bibitem[HS02]{harskononab}
D.~Harari and A.~N. Skorobogatov, \emph{Non-abelian cohomology and rational
  points}, Compositio Math. \textbf{130} (2002), no.~3, 241--273.

\bibitem[HS13]{harskoopen}
D.~Harari and A.~N. Skorobogatov, \emph{Descent theory for open varieties},
  Torsors, \'{e}tale homotopy and applications to rational points, London Math.
  Soc. Lecture Note Ser., vol. 405, Cambridge Univ. Press, Cambridge, 2013,
  pp.~250--279.

\bibitem[HW16]{hwfibration}
Y.~Harpaz and O.~Wittenberg, \emph{On the fibration method for zero-cycles and
  rational points}, Ann. of Math. (2) \textbf{183} (2016), no.~1, 229--295.

\bibitem[HW20]{hwzceh}
\bysame, \emph{Z\'{e}ro-cycles sur les espaces homog\`enes et probl\`eme de
  {G}alois inverse}, J. Amer. Math. Soc. \textbf{33} (2020), no.~3, 775--805.

\bibitem[HWW22]{hww}
Y.~Harpaz, D.~Wei, and O.~Wittenberg, \emph{Rational points on fibrations with
  few non-split fibres}, J.~reine angew. Math. \textbf{791} (2022), 89--133.

\bibitem[Kol96]{kollarbook}
J.~Koll{\'a}r, \emph{Rational curves on algebraic varieties}, Ergeb. Math.
  Grenzgeb. (3), vol.~32, Springer-Verlag, Berlin, 1996.

\bibitem[LA19]{lucchiniunramifiedbrauer}
G.~Lucchini~Arteche, \emph{The unramified {B}rauer group of homogeneous spaces
  with finite stabilizer}, Trans. Amer. Math. Soc. \textbf{372} (2019), no.~8,
  5393--5408.

\bibitem[LA22]{anti-solvable}
\bysame, \emph{On homogeneous spaces with finite anti-solvable stabilizers}, C.
  R. Acad. Sci. Paris S\'er. I Math. \textbf{360} (2022), 777--780.

\bibitem[LMM03]{lmm03}
A.~{Lucchini}, F.~{Menegazzo}, and M.~{Morigi}, \emph{{On the existence of a
  complement for a finite simple group in its automorphism group}}, {Ill. J.
  Math.} \textbf{47} (2003), no.~1-2, 395--418.

\bibitem[Man66]{maninrational}
Yu.~I. Manin, \emph{Rational surfaces over perfect fields}, Publ. Math. de
  l'I.H.\'E.S. (1966), no.~30, 55--113.

\bibitem[Man71]{maninicm}
\bysame, \emph{Le groupe de {B}rauer-{G}rothendieck en g\'eom\'etrie
  diophantienne}, Actes du {C}ongr\`es {I}nternational des {M}ath\'ematiciens
  ({N}ice, 1970), {T}ome 1, Gauthier-Villars, Paris, 1971, pp.~401--411.

\bibitem[Mil80]{milneet}
J.~S. Milne, \emph{\'{E}tale cohomology}, Princeton Mathematical Series,
  vol.~33, Princeton University Press, Princeton, N.J., 1980.

\bibitem[Neu79]{neukirch-solvable}
J.~Neukirch, \emph{On solvable number fields}, Invent. math. \textbf{53}
  (1979), no.~2, 135--164.

\bibitem[SD72]{sd-del-pezzo-5}
H.~P.~F. Swinnerton-Dyer, \emph{Rational points on del {P}ezzo surfaces of
  degree {$5$}}, Algebraic geometry, {O}slo 1970 ({P}roc. {F}ifth {N}ordic
  {S}ummer {S}chool in {M}ath.), 1972, pp.~287--290.

\bibitem[Ser94]{serrecg}
J.-P. Serre, \emph{Cohomologie galoisienne}, fifth ed., Lecture Notes in
  Mathematics, vol.~5, Springer-Verlag, Berlin, 1994.

\bibitem[Sko01]{skobook}
A.~N. Skorobogatov, \emph{Torsors and rational points}, Cambridge Tracts in
  Mathematics, vol. 144, Cambridge University Press, Cambridge, 2001.

\bibitem[Sko15]{skodescenttoric}
\bysame, \emph{Descent on toric fibrations}, Arithmetic and geometry, London
  Math. Soc. Lecture Note Ser., vol. 420, Cambridge Univ. Press, Cambridge,
  2015, pp.~422--435.

\bibitem[Tre18]{trepalin}
A.~Trepalin, \emph{Quotients of del {P}ezzo surfaces of high degree}, Trans.
  Amer. Math. Soc. \textbf{370} (2018), no.~9, 6097--6124.

\bibitem[Wan48]{wang}
S.~Wang, \emph{A counter-example to {G}runwald's theorem}, Ann. of Math. (2)
  \textbf{49} (1948), 1008--1009.

\bibitem[Wei16]{weiopen}
D.~Wei, \emph{Open descent and strong approximation},
  \href{http://arxiv.org/abs/1604.00610}{arXiv:1604.00610}, 2016.

\bibitem[Wit08]{wittalb}
O.~Wittenberg, \emph{On {Albanese} torsors and the elementary obstruction},
  Math. Ann. \textbf{340} (2008), no.~4, 805--838.

\bibitem[Wit18]{wittenbergslc}
\bysame, \emph{Rational points and zero-cycles on rationally connected
  varieties over number fields}, Algebraic geometry: {S}alt {L}ake {C}ity 2015,
  Proc. Sympos. Pure Math., vol.~97, Amer. Math. Soc., Providence, RI, 2018,
  pp.~597--635.

\end{thebibliography}
\end{document}